\newcommand{\R}{{\mathbb R}}
\newcommand{\Z}{{\mathbb Z}}
\newcommand{\N}{{\mathbb N}}
\newcommand{\Sp}{{\mathbb S}}
\newcommand{\ds}{\displaystyle}
\newcommand{\no}{\nonumber}
\newcommand{\be}{\begin{eqnarray}}
\newcommand{\ben}{\begin{eqnarray*}}
\newcommand{\en}{\end{eqnarray}}
\newcommand{\enn}{\end{eqnarray*}}
\newcommand{\pa}{\partial}
\newcommand{\ov}{\overline}
\newcommand{\Rt}{{\rm Re}}
\newcommand{\G}{\Gamma}
\newcommand{\ol}{\overline}
\newcommand{\half}{\frac{1}{2}}
\newtheorem{theorem}{Theorem}[section]
\newtheorem{remark}[theorem]{Remark}
\newtheorem{algorithm}{Algorithm}[section]
\begin{document}
\renewcommand{\theequation}{\arabic{section}.\arabic{equation}}
\begin{titlepage}
\title{\bf Imaging of locally rough surfaces from intensity-only far-field or near-field data}
\author{Bo Zhang and Haiwen Zhang\\
LSEC and Institute of Applied Mathematics, AMSS\\
Chinese Academy of Sciences, Beijing 100190, China\\
(emails: {\sf b.zhang@amt.ac.cn} and {\sf zhanghaiwen@amss.ac.cn})
}
\date{}
\end{titlepage}
\maketitle

\vspace{.2in}

\begin{abstract}
This paper is concerned with a nonlinear imaging problem, which aims to reconstruct a locally perturbed,
perfectly reflecting, infinite plane from intensity-only (or phaseless) far-field or near-field data.
A recursive Newton iteration algorithm in frequencies is developed to reconstruct the locally rough surface
from multi-frequency intensity-only far-field or near-field data, where the fast integral equation solver
developed in \cite{ZhangZhang2013} is used to solve the direct scattering problem in each iteration.
For the case with far-field data, a main feature of our work is that the incident field is taken as
a superposition of two plane waves with different directions rather than one plane wave, so the location
and shape of the local perturbation of the infinite plane can be reconstructed simultaneously from
intensity-only far-field data with multiple wave numbers.
This is different from previous work on inverse scattering from phaseless far-field data,
where only the shape reconstruction was considered due to the translation invariance property of the
phaseless far-field pattern corresponding to one plane wave as the incident field.
Finally, numerical examples are carried out to demonstrate that our reconstruction algorithm is
stable and accurate even for the case of multiple-scale profiles.
\end{abstract}

\section{Introduction}

We consider problems of scattering of time-harmonic electromagnetic waves by a locally
perturbed infinite plane (which is called a locally rough surface).
Such problems occurs in many applications such as radar, remote sensing, geophysics, medical
imaging and nondestructive testing (see, e.g. \cite{SS01,V99,WC01}).

In this paper, we are restricted to the two-dimensional transverse electric (TE) case for a perfectly reflecting,
locally rough surface by assuming that the local perturbation is invariant in the $x_3$ direction.
Denote by $\G:=\{(x_1,x_2)\;:\;x_2=h_\G(x_1),x_1\in\R\}$ the locally rough surface with a smooth surface profile
function $h_\G\in C^2(\R)$ having a compact support in $\R$, and by $D_+$ the unbounded domain above $\G$.
Let $\Sp^1_\pm:=\{x=(x_1,x_2):|x|=1,x_1\gtrless0\}$ be the upper and lower parts of the unit circle
$\Sp^1:=\{x=(x_1,x_2):|x|=1\}$. Suppose a time-harmonic ($e^{-i\omega t}$ time dependence) plane wave
\ben
u^i=u^i(x;d,k):=\exp({ikd\cdot x})
\enn
is incident on the locally rough surface from the upper unbounded domain $D_+$,
where $d=(\sin\theta,-\cos\theta)^T\in\Sp^1_-$ is the incident direction with $\theta\in(-\pi/2,\pi/2)$
the angle of incidence, $k=\omega/c>0$ is the wave number, $\omega$ and $c$ are the wave frequency and
speed in $D_+$, respectively.
Then the total field $u(x)$ is given as the sum of the incident wave $u^i(x)$,
the reflected wave $u^r(x)$ and the unknown scattered wave $u^s(x)$. Here,
$u^r$ is the reflected wave with respect to the infinite plane $x_2=0$ given by
\ben
u^r=u^r(x;d,k):=-\exp({ikd'\cdot x})
\enn
where $d'=(\sin\theta,\cos\theta)\in\Sp^1_+$ is the reflected direction.
Furthermore, the scattered field $u^s$ is required to satisfy the Helmholtz equation in $D_+$,
the boundary condition on $\Gamma$ and the so-called Sommerfeld radiation condition at infinity, respectively:
\begin{align}\label{eq1_nr}
\Delta u^s+k^2 u^s=0&\quad \textrm{in}\;\;D_+\\
\label{eq2_nr}u^s=f&\quad \textrm{on}\;\;\G \\
\label{eq3_nr}\lim_{r\to\infty}r^\half\left(\frac{\pa u^s}{\pa r}-ik u^s\right)=0&\quad r=|x|
\end{align}
where $f=-(u^i+u^r)$ has a compact support on $\Gamma$. Here, $u^{near}:=u^r+u^s$ is called the near field.
In addition, (\ref{eq3_nr}) implies that $u^s$ has the following asymptotic behavior
(see \cite{Willers1987,ZhangZhang2013}):
\be\label{eq4}
u^s(x;d,k)=\frac{e^{ik|x|}}{\sqrt{|x|}}\left(u^\infty(\hat{x};d,k)+O\Big(\frac{1}{|x|}\Big)\right),\qquad |x|\to\infty,
\en
uniformly for all observation directions $\hat{x}=x/|x|\in\Sp^1_+$, where $u^\infty$ is
called the far-field pattern of the scattered field $u^s$.
The geometry of the scattering problem is presented in Figure \ref{fig1}.

\begin{figure}\label{fig1}
  \centering
    \includegraphics[width=4in]{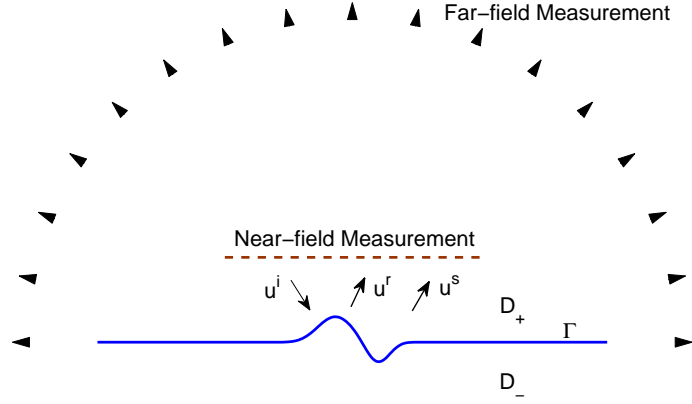}
    \vspace{-0.4in}
 \caption{The scattering problem from a locally rough surface}
\end{figure}

The existence and uniqueness of solutions to the scattering problem (\ref{eq1_nr})-(\ref{eq3_nr})
has been studied by using the integral equation method in \cite{Willers1987} and
a variational method in \cite{BaoLin11}. Recently in \cite{ZhangZhang2013},
a novel integral equation formulation was proposed for this scattering problem,
which leads to a fast numerical solution of the scattering problem including the case with a large wave number.
It should be noted that, in the past years, the mathematical and computational aspects of the
scattering problem (\ref{eq1_nr})-(\ref{eq3_nr}) and other boundary conditions
have been studied extensively by using the integral equation method and variational approaches
for the case when the surface $\G$ is a non-local (or global) perturbation of the infinite plane $x_2=0$
(which is called the rough surface scattering in the engineering community)
(see, e.g. \cite{CZ98,CRZ99,CM05,CHP06,CE10,SS01,V99,WC01,ZC03}).

On the other hand, many reconstruction algorithms have been developed for the inverse problem of reconstructing
locally rough surfaces from the scattered near-field or far-field data, corresponding to incident
point sources or plane waves (see, e.g. \cite{AKY,BaoLin11,BaoLin13,CGHIR,DPT06,KressTran2000,LSZ16}).
For example, a Newton method was proposed in \cite{KressTran2000} to reconstruct a locally rough surface
from the far-field pattern under the condition that the local perturbation is both star-like
and {\em above} the infinite plane.
An optimization method was introduced in \cite{BaoLin13} to recover a mild, locally rough surface
from the scattered field measured on a straight line within one wavelength above the locally
rough surface, under the assumption that the local perturbation is {\em above} the infinite plane.
In \cite{BaoLin11}, a continuation approach over the wave frequency was developed for
reconstructing a general, locally rough surface from the scattered field measured on an upper
half-circle enclosing the local perturbation, based on the choice of the descent vector field.
A regularized Newton method was proposed in \cite{ZhangZhang2013} to reconstruct a general, locally rough surface
from multi-frequency far-field data, where the novel integral equation introduced in \cite{ZhangZhang2013}
is used to solve the forward scattering problem in each iteration.
The reconstruction results obtained in \cite{BaoLin11,ZhangZhang2013} are stable and accurate even for
multi-scale surface profiles in view of using multiple frequency data and considering multiple scattering.
We point out that many reconstruction algorithms have also been developed for reconstructing non-locally rough surfaces
from the scattered near-field data (see, e.g. \cite{BaoLi13,BaoLi14,BP2010,BP2009,CL05,DeSanto1,DeSanto2,LS15}).

In practical applications, it is much harder to obtain data with accurate phase information
compared with just measuring the intensity (or the modulus) of the data, and therefore it is often desirable to
reconstruct the scattering surface profile from the phaseless near-field or far-field data.
However, not many results are available for such problems both mathematically and numerically.
Kress and Rundell first studied such inverse problems in \cite{KR97} and proved that
for the sound-soft bounded obstacle case with one incidence plane wave, the modulus of the far-field pattern
is invariant under translations of the obstacle, and therefore it is impossible to reconstruct the location of the
obstacle from the phaseless far-field data for one incident plane wave.
It was further proved in \cite{KR97} that this ambiguity cannot be remedied by using the phaseless
far-field pattern for finitely many incident plane waves with different wave numbers or different incident directions.
Regularized Newton and Landweber iteration methods have also been discussed in \cite{KR97} for recovering the
shape of the obstacle from the phaseless far-field data.
In \cite{Ivanyshyn07,IvanyshynKress2010}, a nonlinear integral equation method was proposed
to reconstruct the shape of the obstacle from the phaseless far-field data.
Further, in \cite{IvanyshynKress2010} after the shape of the obstacle is reconstructed from the phaseless far-field data,
an algorithm is proposed for the localization of the obstacle by utilizing the translation invariance property
together with several full far-field measurements at the backscattering direction.
In \cite{IvanyshynKress2011}, a nonlinear integral equation method was developed to reconstruct the real-valued surface
impedance function from the phaseless far-field data provided that the bounded obstacle is known in advance.
Recently, a continuation algorithm was proposed in \cite{BaoLiLv13} to reconstruct the shape of a perfectly reflecting
periodic surface from the phaseless near-field data, in \cite{BLT11} to deal with the phaseless measurements for
an inverse source problem, and in \cite{BaoZhang16} to recover the shape of multi-scale sound-soft large rough surfaces
from phaseless measurements of the scattered field generated by tapered waves with multiple frequencies.
Recently, for inverse acoustic scattering with bounded obstacles it was proved in \cite{ZZ16}
that the translation invariance property of the phaseless far-field pattern can be broken by using
superpositions of two plane waves as the incident fields in conjunction with all wave numbers in a finite interval.
Further, a recursive Newton-type iteration algorithm in frequencies was developed in \cite{ZZ16} to numerically
reconstruct both the location and the shape of the obstacle simultaneously from multi-frequency phaseless far-field data.

The purpose of this paper is to develop an efficient imaging algorithm to reconstruct the locally rough surface $\G$
from phaseless data associated with incident plane waves. Two types of phaseless data will be considered:
the phaseless far-field data and the phaseless near-field data (see Figure \ref{fig1}).
Similarly as in the bounded obstacle case, the phaseless far-field pattern, $|u^\infty(\hat{x};d,k)|$,
is also invariant under translations of the local perturbation along the $x_1$ direction for one incident plane wave,
that is, $|u^\infty_\ell(\hat{x};d,k)|=|u^\infty(\hat{x};d,k)|$, $\hat{x}\in\Sp^1_+$ for all $\ell\in\R,$
where $u^\infty_\ell(\hat{x};d,k)$ is the far-field pattern of the scattering solution with
respect to the shifted surface $\G^{\ell}:=\{(x_1+\ell,h_\G(x_1)):x_1\in\R\}$ of $\G$ along the $x_1$ direction
(see Theorem \ref{th2} below).
Thus, it is impossible to recover the location of the locally rough surface $\G$ from phaseless far-field data
corresponding to one incident plane wave. To overcome this difficulty, motivated by \cite{ZZ16},
we will use the following superposition of two plane waves rather than one plane wave as the incident field:
\be\label{IW}
u^i=u^i(x;d_1,d_2,k):=\exp({ikd_1\cdot x})+\exp({ikd_2\cdot x})
\en
where, for $l=1,2$, $\theta_l\in(-\pi/2,\pi/2)$ is the incidence angle and
$d_l=(\sin\theta_l,-\cos\theta_l)^T\in\Sp^1_-$ is the incident direction.
Then the reflected wave with respect to the infinite plane $x_2=0$ will be given by
\ben
u^r=u^r(x;d_1,d_2,k):=-\exp({ikd_1'\cdot x})-\exp({ikd_2'\cdot x})
\enn
where, for $l=1,2$, $d'_l=(\sin\theta_l,\cos\theta_l)\in\Sp^1_+$ is the reflected direction,
and the scattered field $u^s$ will have the asymptotic behavior
\be\label{asymp-2}
u^s(x;d_1,d_2,k)=\frac{e^{ik|x|}}{\sqrt{|x|}}\left(u^\infty(\hat{x};d_1,d_2,k)
+O\Big(\frac{1}{|x|}\Big)\right),\qquad |x|\to\infty,
\en
uniformly for all observation directions $\hat{x}=x/|x|\in\Sp^1_+$.

We will prove that, if the incident field is taken as $u^i=u^i(x;d_1,d_2,k)$ with
$d_1\not=d_2$ and all wave numbers $k$ in a finite interval,
then the translation invariance property of the phaseless far-field pattern does not hold
for non-trivial locally rough surfaces (that is, $h_\G\not\equiv0$) (see Theorem \ref{th3} below).
Thus, both the location and the shape of the local perturbation of the surface $\G$ can be reconstructed from
the phaseless far-field data, corresponding to such incident fields with multiple wave numbers
(see the numerical experiments in Section \ref{se4}).
Furthermore, a recursive Newton iteration algorithm in frequencies is developed to reconstruct both
the location and the shape of the surface $\G$ simultaneously from multi-frequency phaseless far-field data.
A similar Newton iteration algorithm is also developed for reconstructing the location and shape of
the surface $\G$ from multi-frequency phaseless near-field data.
In our reconstruction algorithms the fast integral equation solver developed in \cite{ZhangZhang2013}
is used to solve the forward scattering problem in each iteration.

This paper is organized as follows. Section \ref{se1} gives a brief introduction to the integral equation
formulation of the forward problem proposed in \cite{ZhangZhang2013}, and
the inverse scattering problem is studied in Section \ref{se2} with phaseless far-field and near-field data.
In Section \ref{se3}, a recursive Newton-type iteration algorithm in frequencies is proposed to solve the
inverse problems. Numerical examples are carried out in Section \ref{se4} to illustrate the effectiveness of
our inversion algorithm. Concluding remarks are presented in Section \ref{se5}.

\section{The integral equation formulation for the scattering problem}\label{se1}
\setcounter{equation}{0}

In this section we give a brief introduction to the integral equation formulation
proposed in \cite{ZhangZhang2013} for the scattering problem (\ref{eq1_nr})-(\ref{eq3_nr})
which leads to a fast numerical solution of the problem and will be used in
our inversion algorithm. To this end, we need the following notations.

Let $B_R\coloneqq\{x=(x_1,x_2)\;|\;|x|<R\}$ be a circle with $R>0$ large enough so that
the local perturbation $\{(x_1,h_{\G}(x_1))\;|\;x_1\in\textrm{supp}(h_{\G})\}\subset B_R$.
Then $\G_R\coloneqq\G\cap B_R$ represents the part of $\G$ containing the local perturbation of the infinite plane.
Denote by $x_A:=(-R,0),x_B:=(R,0)$ the endpoints of $\G_R$.
Write $\mathbb{R}^2_\pm\coloneqq\{(x_1,x_2)\in\R^2\;|\;x_2\gtrless0\}$,
$D^\pm_R\coloneqq B_R\cap D_\pm$ and $\pa B^\pm_R\coloneqq\pa B_R\cap D_\pm$,
where $D_-:=\{(x_1,x_2)\;|\;x_2<h_\G(x_1),x_1\in\R\}.$ See Figure \ref{fig2} for the problem geometry.
\begin{figure}\label{fig2}
  \centering
    \includegraphics[width=4in]{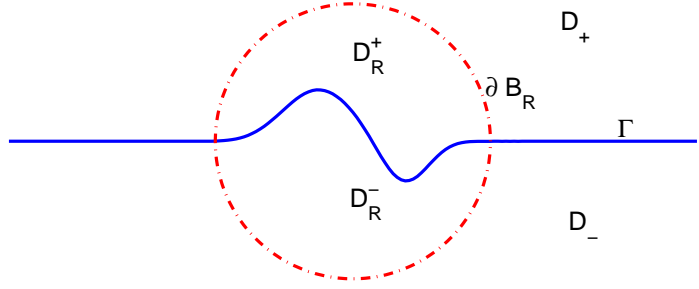}
    \vspace{-0.4in}
 \caption{Problem geometry}
\end{figure}

For $\varphi\in C(\pa D_R^-)$,
define $S_k,S_k^{re},K_k,K_k^{re}$ to be the boundary integral operators of the following form:
\ben
(S_k\varphi)(x)&:=&\int_{\pa D^-_R}\Phi_k(x,y)\varphi(y)ds(y),\quad x\in\pa D^-_R\\
(S_k^{re}\varphi)(x)&:=&
     \left\{\begin{array}{l}
     \ds\int_{\pa D^-_R}\Phi_k(x,y)\varphi(y)ds(y),\quad x\in\Gamma_R\\
     \ds\int_{\pa D^-_R}\Phi_k(x^{re},y)\varphi(y)ds(y),\quad x\in\pa B^-_R\cup\{x_A,x_B\}
     \end{array}\right.\\
(K_k\varphi)(x)&:=&\int_{\pa D^-_R}\frac{\pa\Phi_k(x,y)}{\pa\nu(y)}\varphi(y)ds(y),
     \quad x\in\pa D^-_R\\
(K_k^{re}\varphi)(x)&:=&
\left\{\begin{array}{l}\ds\int_{\pa D^-_R}\frac{\pa\Phi_k(x,y)}{\pa\nu(y)}\varphi(y)ds(y),
    \quad x\in\Gamma_R\\
    \ds\int_{\pa D^-_R}\frac{\pa\Phi_k(x^{re},y)}{\pa\nu(y)}\varphi(y)ds(y),
    \quad x\in\pa B^-_R\cup\{x_A,x_B\}
    \end{array}\right.
\enn
where $x^{re}=(x_1,-x_2)$ is the reflection of $x=(x_1,x_2)$ about the $x_1$-axis,
$\Phi_k(x,y)$ is the fundamental solution of the Helmholtz equation $\Delta w+k^2w=0$
with the wavenumber $k$, and $\nu$ is the unit outward normal on $\pa D^-_R$.

It was proved in \cite{ZhangZhang2013} that the scattering solution $u^s$ of the scattering
problem (\ref{eq1_nr})-(\ref{eq3_nr}) is given as follows
\be\label{eq10_nr}
u^s(x)=\int_{\pa D^-_R}\left[\frac{\pa\Phi_k(x,y)}{\pa\nu(y)}-i\eta\Phi_k(x,y)\right]\varphi(y)ds(y),\quad x\in\pa D^-_R,
\en
with $\eta\neq0$ a real coupling parameter, if and only if $\varphi\in C(\pa D_R^-)$ is the solution of
the integral equation $P\varphi(x)=g(x)$. Here,
\be\label{eq11_nr}
P\varphi\coloneqq\left\{\begin{array}{ll}
\ds\varphi+\left(K_k\varphi-i\eta S_k\varphi\right)
   +\left(K^{re}_k\varphi-i\eta S_k^{re}\varphi\right), &x\in\G_R\\
\ds\half\varphi+\left(K_k\varphi-i\eta S_k\varphi\right)
   +\left(K^{re}_k\varphi-i\eta S^{re}_k\varphi\right), &x\in\pa B_R^-\cup\{x_A,x_B\}
\end{array}\right.
\en
and
\be\label{eq12_nr}
g(x):=\left\{\begin{array}{ll}
-2\big(u^i(x)+u^r(x)\big), &x\in\G_R\\
0, &x\in\pa B_R^-\cup\{x_A,x_B\}
\end{array}\right.
\en

In \cite[theorem 2.4]{ZhangZhang2013}, it was shown that the integral equation $P\varphi=g$ is uniquely solvable
in $C(\pa D_R^-)$, which is solved numerically by using the Nystr\"{o}m method with a graded mesh
introduced in \cite[Section 3.5]{ColtonKress1998} (see \cite{ZhangZhang2013} for details).

\begin{remark}\label{re3}{\rm
By (\ref{eq10_nr}) and the asymptotic behavior of the fundamental solution $\Phi_k$, it follows that the
far-field pattern of the scattered field $u^s$ is given by
\be\label{eq14_nr}
u^\infty(\hat{x};d_1,d_2,k)=\frac{e^{-i\pi/4}}{\sqrt{8\pi k}}\int_{\pa D_R^-}
[k\nu(y)\cdot\hat{x}+\eta]e^{-ik\hat{x}\cdot y}\varphi(y)ds(y),\qquad \hat{x}\in\Sp^1_+,
\en
which is an analytic function on the unit circle $\Sp^1_+$, where $\varphi$ is the solution of the
integral equation $P\varphi=g$.
}
\end{remark}

\section{The inverse problems}\label{se2}
\setcounter{equation}{0}

In this paper we consider two types of scattered field measurement data without phase information:
the phaseless far-field data and the phaseless near-field data (see Figure \ref{fig1}).
The far-field pattern $u^\infty$ is already defined in (\ref{eq4}), while the near-field $u^{near}$
is defined as $u^{near}(x;d,k)=u^r(x;d,k)+u^s(x;d,k)$, $x\in\G_{H,L}$ for positive constants $H$ and $L$,
where $\G_{H,L}:=\{(x_1,H),x_1\in[-L,L]\}$ is the measurement straight line segment above the surface $\G$.

We first study properties of the phaseless far-field pattern and the phaseless near-field under
translations of the locally rough surface. To this end, for $\ell\in\R$ define
$\G^{\ell}:=\{(x_1+\ell,h_\G(x_1)):x_1\in\R\}$ to be the shifted surface of $\G$ along the $x_1$ direction.
Then we have the following theorem.

\begin{theorem}\label{th2}
Given the wavenumber $k>0$, let the incident wave be given by $u^i=u^i(x;d,k)$ with the incident direction
$d=(\sin\theta,-\cos\theta)$ and the incident angle $\theta\in(-\pi/2,\pi/2)$ and let $u^s(x;d,k),u^s_\ell(x;d,k)$
be the scattering solutions of the scattering problem $(\ref{eq1_nr})-(\ref{eq3_nr})$, corresponding to the locally
rough surface $\G$ and the shifted one $\G^\ell$, respectively.
Assume that $u^\infty(\hat{x};d,k),u^\infty_\ell(\hat{x};d,k)$ ($u^{near}(x;d,k),u^{near}_\ell(x;d,k)$) are the
far-field pattern (the near-field) of the scattering solutions $u^s,u^s_\ell$, respectively.
Then we have $u^s_\ell(x^{\ell};d,k)=e^{ik\ell\sin\theta}u^s(x;d,k)$,
$u^{near}_\ell(x^{\ell};d,k)=e^{ik\ell\sin\theta}u^{near}(x;d,k)$, $x\in D_+$
and $u^\infty_\ell(\hat{x};d,k)=e^{ik\ell(\sin\theta-\hat{x}_1)}u^\infty(\hat{x};d,k)$, $\hat{x}\in\Sp^1_+$
where $x^{\ell}:=x+(\ell,0)^T$ for $x\in\R^2$ and $\hat{x}_1$ is the first component of $\hat{x}$.
\end{theorem}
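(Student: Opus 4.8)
The plan is to reduce the whole statement to two standard facts: the translation invariance of the Helmholtz equation together with the Sommerfeld radiation condition, and the uniqueness of the forward problem $(\ref{eq1_nr})$--$(\ref{eq3_nr})$ proved in \cite{Willers1987,BaoLin11}. Once the scattered-field identity is in hand, the near-field identity is immediate, and the far-field identity is obtained by feeding the translation relation into the asymptotic expansion $(\ref{eq4})$. Throughout I write $x^{-\ell}:=x-(\ell,0)^T$, and I record the key bookkeeping remark that the first component of both $d=(\sin\theta,-\cos\theta)$ and the reflected direction $d'=(\sin\theta,\cos\theta)$ equals $\sin\theta$.

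First I would establish $u^s_\ell(x^\ell;d,k)=e^{ik\ell\sin\theta}u^s(x;d,k)$. Note that the domain above $\G^\ell$ is exactly $D_+$ translated by $(\ell,0)^T$, and that $y\in\G^\ell$ if and only if $y^{-\ell}\in\G$. Set $v(x):=u^s(x^{-\ell};d,k)$ on the domain above $\G^\ell$. Since $(\ref{eq1_nr})$ and $(\ref{eq3_nr})$ are invariant under the rigid translation $x\mapsto x^{-\ell}$ (a translate of a radiating solution is again radiating, the bounded shift only altering the far-field pattern), $v$ satisfies the Helmholtz equation and the radiation condition above $\G^\ell$. For the boundary values, because $d_1=d_1'=\sin\theta$ we have $u^i(x^{-\ell})=e^{-ik\ell\sin\theta}u^i(x)$ and $u^r(x^{-\ell})=e^{-ik\ell\sin\theta}u^r(x)$ for all $x$, hence for $y\in\G^\ell$
\[
v(y)=u^s(y^{-\ell};d,k)=-\big(u^i(y^{-\ell})+u^r(y^{-\ell})\big)=e^{-ik\ell\sin\theta}\big(-(u^i(y)+u^r(y))\big).
\]
Thus $e^{ik\ell\sin\theta}v$ solves $(\ref{eq1_nr})$--$(\ref{eq3_nr})$ for the surface $\G^\ell$ with the same incident field, so by uniqueness $u^s_\ell(x;d,k)=e^{ik\ell\sin\theta}u^s(x^{-\ell};d,k)$ above $\G^\ell$; replacing $x$ by $x^\ell$ gives the claim. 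The near-field identity then follows from $u^{near}_\ell=u^r+u^s_\ell$ together with $u^r(x^\ell)=e^{ik\ell\sin\theta}u^r(x)$.

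For the far-field identity I would substitute $x^\ell$ into $(\ref{eq4})$ written for $u^s_\ell$ and compare with $(\ref{eq4})$ for $u^s$ via the scattered-field identity just proved. Using the elementary expansions $|x^\ell|=|x|+\ell\hat x_1+O(1/|x|)$ and $\widehat{x^\ell}=\hat x+O(1/|x|)$ as $|x|\to\infty$, and the continuity of $u^\infty_\ell$ on $\Sp^1_+$ to replace $u^\infty_\ell(\widehat{x^\ell};d,k)$ by $u^\infty_\ell(\hat x;d,k)$, one gets
\[
u^s_\ell(x^\ell;d,k)=\frac{e^{ik|x|}}{\sqrt{|x|}}\,e^{ik\ell\hat x_1}\big(u^\infty_\ell(\hat x;d,k)+O(1/|x|)\big).
\]
Equating the leading-order term of this with that of $e^{ik\ell\sin\theta}u^s(x;d,k)$ yields $e^{ik\ell\hat x_1}u^\infty_\ell(\hat x;d,k)=e^{ik\ell\sin\theta}u^\infty(\hat x;d,k)$, i.e. $u^\infty_\ell(\hat x;d,k)=e^{ik\ell(\sin\theta-\hat x_1)}u^\infty(\hat x;d,k)$.

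I expect the only slightly delicate point to be this last matching of asymptotics: one must check that the $O(1/|x|)$ contributions arising separately from $|x^\ell|$, from $\sqrt{|x^\ell|}$, and from the argument shift $\widehat{x^\ell}\to\hat x$ in $u^\infty_\ell$ are genuinely lower order and uniform for $\hat x\in\Sp^1_+$ (the borderline $\hat x_1=0$ being harmless, e.g. by the analyticity of the far-field pattern noted in Remark \ref{re3}). The translation step and the appeal to uniqueness are routine once the phase identity $d_1=d_1'=\sin\theta$ has been used.
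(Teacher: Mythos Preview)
Your proposal is correct and follows essentially the same route as the paper: define the translated scattered field, verify it solves $(\ref{eq1_nr})$--$(\ref{eq3_nr})$ for $\G^\ell$ via the phase identity $d_1=d_1'=\sin\theta$, invoke uniqueness, and then read off the far-field relation from the asymptotics $|x^\ell|=|x|+\ell\hat x_1+O(1/|x|)$. The only cosmetic difference is that the paper builds the phase factor $e^{ik\ell\sin\theta}$ into the definition of $v$ from the outset, whereas you multiply it in after checking the boundary condition; your treatment of the far-field matching is in fact more carefully worded than the paper's.
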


\begin{proof}
Assume that $D^\ell_+$ is the unbounded domain above $\G^\ell$.
Let $v(x):=e^{ik\ell\sin\theta}u^s(x^{-\ell};d,k)$ for $x\in D^\ell_+$.
Then it is easily seen that $v$ is well defined in $D^\ell_+$.
Thus, by the properties of the scattered field $u^s$ it follows that $v$ satisfies the Helmholtz equation (\ref{eq1_nr})
in $D^\ell_+$ and the Sommerfeld radiation condition (\ref{eq3_nr}).
Further, it can be seen that
\ben
u^i(x,d,k)=e^{ik\ell\sin\theta}u^i(x^{-\ell},d,k),\quad
u^r(x,d,k)=e^{ik\ell\sin\theta}u^r(x^{-\ell},d,k)
\enn
Then from the boundary condition (\ref{eq2_nr}) on $u^s$, we have that $u^i(x,d,k)+u^r(x,d,k)+v(x)=0$ for $x\in\G^\ell$.
Now, the uniqueness result of the scattering problem (\ref{eq1_nr})-(\ref{eq3_nr}) implies that
$v(x)=u^s_\ell(x;d,k)$ for $x\in D^\ell_+$.
Therefore, we obtain that $u^s_\ell(x^{\ell};d,k)=e^{ik\ell\sin\theta}u^s(x;d,k)$,
$u^{near}_\ell(x^{\ell};d,k)=e^{ik\ell\sin\theta}u^{near}(x;d,k)$ for $x\in D_+$.

Finally, from the asymptotic behavior (\ref{eq4}) of the scattered field and the fact that for $a\in\R^2$,
\ben
|x-a|-|x|=-a\cdot\hat{x}+O\left(\frac{1}{|x|}\right),\quad |x|\rightarrow\infty
\enn
uniformly for all directions $\hat{x}=x/|x|\in\Sp^1$, it is derived that
$u^\infty_\ell(\hat{x};d,k)=e^{ik\ell(\sin\theta-\hat{x}_1)}u^\infty(\hat{x};d,k)$ for $\hat{x}\in\Sp^1_+$.
The proof is thus completed.
\end{proof}

Theorem \ref{th2} indicates that the modulus of the far field pattern (or the phaseless far-field pattern)
for one incident plane wave is invariant under translations along the $x_1$ direction of the boundary,
that is, $|u^\infty_\ell(\hat{x};d,k)|=|u^\infty(\hat{x};d,k)|$, $\hat{x}\in\Sp^1_+$ for all $\ell\in\R.$
This means that the location of the local perturbation on the boundary $\G$ can not be determined from
the phaseless far-field pattern for one incident plane wave, as shown in Example \ref{fig4-1}.
In the next theorem we prove that, if the locally rough surface is non-trivial (that is, $x_2\not=0$)
then the translation invariance property of the phaseless far-field pattern only holds for a countably infinite
number of real numbers $\ell$ in the case when the incident field is taken as a superposition of two plane waves
with different directions, that is, $u^i=u^i(x;d_1,d_2,k)$ with $d_1\not=d_2$.

\begin{theorem}\label{th3}
Let the incident wave be given by $u^i=u^i(x;d_1,d_2,k)$ with $d_j=(\sin\theta_j,-\cos\theta_j),$
$\theta_j\in(-\pi/2,\pi/2),\;j=1,2$ and $\theta_1\neq\theta_2$.
Assume that $u^s(x;d_1,d_2,k),u^s_l(x;d_1,d_2,k)$ are the scattering solutions of the problem
$(\ref{eq1_nr})-(\ref{eq3_nr})$, corresponding to the locally rough surface $\G$ and the shifted one $\G^\ell$,
respectively, with $u^\infty(\hat{x};d_1,d_2,k),u^\infty_\ell(\hat{x};d_1,d_2,k)$ the corresponding far-field
patterns.
Assume further that the locally rough surface $\G$ is non-trivial (that is, $x_2\not=0$ or $h_\G\not\equiv0$).
Then we have
\be\label{TI}
|u^\infty(\hat{x};d_1,d_2,k)|=|u^\infty_\ell(\hat{x};d_1,d_2,k)|,\;\;\hat{x}\in\Sp^1_+
\en
for all $\ell=\ell_n:=2\pi n/[k(\sin\theta_1-\sin\theta_2)]$ with any $n\in\Z$.
Further, except for $\ell_n$, there may exist at most one real constant $\tau$ with $0<\tau<2\pi$ such that
(\ref{TI}) holds for $\ell=\ell_{n\tau}:=(2\pi n+\tau)/[k(\sin\theta_1-\sin\theta_2)]$ with any $n\in\Z$.
\end{theorem}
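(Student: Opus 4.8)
The plan is to reduce (\ref{TI}) to a pointwise statement about the product of the two single-wave far-field patterns and then to exploit analyticity. First I would use linearity of the forward problem to write $u^\infty(\hat x;d_1,d_2,k)=u^\infty(\hat x;d_1,k)+u^\infty(\hat x;d_2,k)$, where $u^\infty(\cdot;d_j,k)$ denotes the far-field pattern produced by the single incident plane wave $\exp(ikd_j\cdot x)$ together with its reflected wave. Applying Theorem \ref{th2} to each summand over the shifted surface $\G^\ell$ gives
\[
u^\infty_\ell(\hat x;d_1,d_2,k)=e^{-ik\ell\hat x_1}\Big(e^{ik\ell\sin\theta_1}u^\infty(\hat x;d_1,k)+e^{ik\ell\sin\theta_2}u^\infty(\hat x;d_2,k)\Big).
\]
Taking moduli, the factor $e^{-ik\ell\hat x_1}$ (of modulus one) and an overall $e^{ik\ell\sin\theta_2}$ drop out, and a direct expansion yields
\[
|u^\infty_\ell(\hat x;d_1,d_2,k)|^2-|u^\infty(\hat x;d_1,d_2,k)|^2 = 2\,\Rt\!\Big[(e^{it}-1)\,u^\infty(\hat x;d_1,k)\,\overline{u^\infty(\hat x;d_2,k)}\Big],
\]
where $t:=k\ell(\sin\theta_1-\sin\theta_2)$, and $\sin\theta_1\ne\sin\theta_2$ since $\theta_1\ne\theta_2$ with $\theta_j\in(-\pi/2,\pi/2)$. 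Thus (\ref{TI}) is equivalent to $\Rt\big[(e^{it}-1)h(\hat x)\big]=0$ for every $\hat x\in\Sp^1_+$, with $h:=u^\infty(\cdot;d_1,k)\,\overline{u^\infty(\cdot;d_2,k)}$.

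Next I would analyze this condition using $e^{it}-1=2i\sin(t/2)\,e^{it/2}$. When $t\in2\pi\Z$, that is $\ell=\ell_n$, the condition holds trivially, which proves the first assertion. When $t\notin2\pi\Z$ the condition reads $\Rt\big[i\,e^{it/2}h(\hat x)\big]=0$, i.e. $h(\hat x)\in e^{-it/2}\R$ for all $\hat x\in\Sp^1_+$; in other words $h$ must take all its values on a single line through the origin of $\C$, and this line is determined by $t\bmod2\pi$. Since the lines $e^{-it/2}\R$ are pairwise distinct as $t$ ranges over $(0,2\pi)$, at most one residue $\tau\in(0,2\pi)$ can be admissible, and $t=2\pi n+\tau$ corresponds exactly to $\ell=\ell_{n\tau}$. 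Hence the whole statement reduces to showing that $h\not\equiv0$ on $\Sp^1_+$ whenever $h_\G\not\equiv0$.

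For the latter, the key step is to prove that $u^\infty(\cdot;d_j,k)\not\equiv0$ on $\Sp^1_+$ for $j=1,2$. I would argue by contradiction: if $u^\infty(\cdot;d_j,k)\equiv0$, then a Rellich-type lemma together with unique continuation for the locally rough surface problem (as in the uniqueness theory for (\ref{eq1_nr})-(\ref{eq3_nr}); see \cite{Willers1987,BaoLin11,ZhangZhang2013}) forces $u^s(\cdot;d_j,k)\equiv0$ in $D_+$, so the boundary condition (\ref{eq2_nr}) gives $u^i(x;d_j,k)+u^r(x;d_j,k)=0$ on $\G$. Evaluating at $x=(x_1,h_\G(x_1))\in\G$ yields $-2i\,e^{ikx_1\sin\theta_j}\sin\!\big(kh_\G(x_1)\cos\theta_j\big)=0$, hence $kh_\G(x_1)\cos\theta_j\in\pi\Z$ for every $x_1\in\R$; since $h_\G$ is continuous, $\cos\theta_j\ne0$, and $h_\G$ has compact support (so it vanishes somewhere), this forces $h_\G\equiv0$, contradicting the hypothesis. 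Once both $u^\infty(\cdot;d_1,k)$ and $u^\infty(\cdot;d_2,k)$ are nontrivial, their zeros on $\Sp^1_+$ are isolated because they are (real-)analytic (Remark \ref{re3}), so their product $h$ is not identically zero, and the argument is complete.

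The step I expect to be the main obstacle is the implication ``$u^\infty(\cdot;d_j,k)\equiv0$ on the upper half circle $\Rightarrow$ $u^s(\cdot;d_j,k)\equiv0$ in $D_+$'': this is a Rellich/unique-continuation statement in the half-space geometry rather than for a bounded obstacle. I would handle it either by invoking the uniqueness results already established for (\ref{eq1_nr})-(\ref{eq3_nr}) in \cite{Willers1987,BaoLin11,ZhangZhang2013}, or by reproducing the standard reflection-plus-Rellich argument adapted to $D_+$; everything else in the proof is elementary complex algebra together with the short boundary computation above.
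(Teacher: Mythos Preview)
Your proof is correct and essentially identical to the paper's: both reduce (\ref{TI}) via linearity and Theorem~\ref{th2} to $\Rt[(e^{it}-1)v_1\overline{v_2}]=0$ with $t=k\ell(\sin\theta_1-\sin\theta_2)$ and $v_j=u^\infty(\cdot;d_j,k)$, both establish $v_1\overline{v_2}\not\equiv0$ through the Rellich step (the paper invokes \cite[Lemma~3.2]{Ma05}) together with the boundary computation $\sin(kh_\Gamma\cos\theta_j)=0\Rightarrow h_\Gamma\equiv0$, and both finish with an elementary argument yielding at most one extra residue~$\tau$. The only cosmetic difference is that you use the factorization $e^{it}-1=2i\sin(t/2)e^{it/2}$ and the line-through-the-origin picture, whereas the paper evaluates at a single point $\hat x_0$ and solves $\cos[t+\theta(\hat x_0)]=\cos\theta(\hat x_0)$ to extract $\tau=2\pi-2\theta(\hat x_0)$.
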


\begin{proof}
Let $v_j(\hat{x}):=u^\infty(\hat{x};d_j,k)$, where $u^\infty(\hat{x};d_j,k)$ is the far-field pattern of the
solution $u^s(x;d_j,k)$ to the scattering problem (\ref{eq1_nr})-(\ref{eq3_nr}), corresponding to the incident field
$u^i=u^i(x;d_j,k)$, $j=1,2.$ From the linearity of the scattering problem and the definition of $u^i({x};d_1,d_2,k)$,
we have that $u^\infty(\hat{x};d_1,d_2,k)=v_1(\hat{x})+v_2(\hat{x})$.
Further, by Theorem \ref{th2} it follows that
$$
u^\infty_\ell(\hat{x};d_1,d_2,k)=e^{ik\ell(\sin\theta_1-\hat{x}_1)}v_1(\hat{x})
+e^{ik\ell(\sin\theta_2-\hat{x}_1)}v_2(\hat{x}).
$$
Then (\ref{TI}) becomes
\ben
|v_1(\hat{x})+v_2(\hat{x})|=|e^{ik\ell(\sin\theta_1-\hat{x}_1)}v_1(\hat{x})
+e^{ik\ell(\sin\theta_2-\hat{x}_1)}v_2(\hat{x})|,\quad\hat{x}\in\Sp^1_+
\enn
for $\ell\in\R$. By a direct calculation, the above equation is reduced to
\be\label{c5eq6}
\Rt\left(v_1(\hat{x})\ol{v_2(\hat{x})}\right)=
\Rt\left(e^{ik\ell(\sin\theta_1-\sin\theta_2)}v_1(\hat{x})\ol{v_2(\hat{x})}\right),\quad\hat{x}\in\Sp^1_+
\en
for $\ell\in\R$.

We can claim that $v_1(\hat{x})\ol{v_2(\hat{x})}\not\equiv 0,\;\;\hat{x}\in\Sp^1_+.$
In fact, if this is not true, then we have
\be\label{c5eq7}
v_1(\hat{x})\ol{v_2(\hat{x})}=0,\quad\hat{x}\in\Sp^1_+
\en
We now distinguish between the following two cases.

{\bf Case 1.} $v_1\equiv0$ on $\Sp^1_+$.
Since $v_1(\hat{x})$ is the far-field pattern of $u^s(x;d_1,k)$, then, by \cite[Lemma 3.2]{Ma05}
we have that $u^s(x;d_1,k)\equiv0$ in $D_+$. Further, from the boundary condition (\ref{eq2_nr})
for $u^s(x;d_1,k)$ it follows that $u^i(x;d_1,k)+u^r(x;d_1,k)\equiv0$ on $\G$. This implies that
$\exp(2ikh_\G(x_1)\cos\theta_1)\equiv 1$ for all $x_1\in\R$. Thus, $kh_\G(x_1)\cos\theta_1\equiv n\pi$
for all $x_1\in\R$, where $n=0,\pm1,\pm2,\cdots$, so $h_\G\equiv 0$. This is a contradiction.

{\bf Case 2.} There exists an $\hat{x}_0\in\Sp^1_+$ such that $v_1(\hat{x}_0)\neq0$.
Then we have $v_1\neq0$ in a neighborhood of $x_0$, which, together with (\ref{c5eq7}), implies that
$v_2=0$ in a neighborhood of $x_0$. Since $v_2$ is an analytic function on $\Sp^1_+$ (see Remark \ref{re3}),
we have $v_2\equiv0$ on $\Sp^1_+$. Arguing similarly as in Case 1 gives that $h_\G\equiv 0$,
contradicting to the assumption of the theorem.

Now, by (\ref{c5eq6}) it is easy to see that (\ref{c5eq6}) or equivalently (\ref{TI}) holds if $\ell$
satisfies the condition
\be\label{TI-C1}
k\ell(\sin\theta_1-\sin\theta_2)=2\pi n,\qquad n\in\Z,
\en
or if $\ell=2\pi n/[k(\sin\theta_1-\sin\theta_2)]$ with any $n\in\Z$.
Further, assume that $v_1(\hat{x}_0)\ov{v_2(\hat{x}_0)}\not=0$ for some $\hat{x}_0\in\Sp^1_+$ and write $v_1(\hat{x}_0)\ov{v_2(\hat{x}_0)}=|v_1(\hat{x}_0)\ov{v_2(\hat{x}_0)}|\exp(i\theta(\hat{x}_0))$
with $0<\theta(\hat{x}_0)<\pi$. Then (\ref{c5eq6}) is reduced to the equation
\be\label{TI-C1a}
\cos[k\ell(\sin\theta_1-\sin\theta_2)+\theta(\hat{x}_0)]-\cos[\theta(\hat{x}_0)]=0.
\en
By (\ref{TI-C1a}) we know that, except for the above $\ell$ satisfying the condition (\ref{TI-C1}),
(\ref{c5eq6}) or equivalently (\ref{TI}) also holds for $\ell$ satisfying the condition
\be\label{TI-C1+}
k\ell(\sin\theta_1-\sin\theta_2)=2\pi n-2\theta(\hat{x}_0),\qquad n\in\Z,
\en
or equivalently for
\be\label{TI-C1b}
\ell=\frac{2\pi n-2\theta(\hat{x}_0)}{k(\sin\theta_1-\sin\theta_2)},\;\; n\in\Z.
\en
Thus, except for $\ell=\ell_n$, there may exist at most one real number $\tau$ with $0<\tau<2\pi$ such that
(\ref{c5eq6}) or equivalently (\ref{TI}) holds for $\ell=(2\pi n+\tau)/[k(\sin\theta_1-\sin\theta_2)]$
with any $n\in\Z$. In fact, by (\ref{TI-C1b}) we have $\tau=2\pi-2\theta(\hat{x}_0)$.
The proof is thus complete.
\end{proof}

Theorem \ref{th3} indicates that the translation invariance property of the phaseless far-field pattern can be broken
for non-trivial locally rough surfaces if the incident field is taken as $u^i=u^i(x;d_1,d_2,k)$ with
$d_1\not=d_2$ and all wave numbers $k\in[k_1,k_N]$ for some wave numbers $k_N>k_1>0.$
Then it is expected that both the location and the shape of the local perturbation part of the boundary $\G$ can be
reconstructed simultaneously from the phaseless far-field data, corresponding to such incident fields with multiple
wave numbers, as demonstrated in the numerical experiments.

On the other hand, from Theorem \ref{th2} and Figure \ref{fig4-7} it is expected that the translation invariance
property of the phaseless near-field data measured on the line segment $\G_{H,L}$ does not hold even for one incident
plane wave though we can not prove this rigorously.
Thus, both the location and the shape of the local perturbation part of the boundary $\G$ can also be
reconstructed from the phaseless near-field data, corresponding to one incident plane wave (see the numerical examples).

Based on the above discussions, we consider the following two inverse problems.

{\bf Inverse problem (IP1):} Given the incident fields $u^i=u^i(x;d_1,d_2,k)$ with multiple wave numbers
$k=k_1,\cdots,k_N$, where $d_1,d_2\in\Sp^1_-$ and $d_1\neq d_2$, to reconstruct the locally rough surface $\G$
from the corresponding intensity-only far-field data $|u^\infty(\hat{x};d_1,d_2,k)|^2$, $\hat{x}\in\Sp^1_+$,
$k=k_1,\cdots,k_N$.

{\bf Inverse problem (IP2):} Given the wave number $k>0$ and the incident field $u^i=u^i(x;d,k)$
with $d\in\Sp^1_-$, to reconstruct the locally rough surface $\G$ from the corresponding intensity-only
near-field data $|u^{near}(x;d,k)|^2$, $x\in\G_{H,L}$.

In the next section we develop a recursive Newton-type iteration algorithm in frequencies for solving
the inverse problems (IP1) and (IP2).
To this end, given the incident wave $u^i=u^i(x;d_1,d_2,k)$ with $d_1,d_2\in\Sp^1_-$ and $d_1\neq d_2$,
define the far-field operator $\mathcal{F}_{(d_1,d_2,k)}$ mapping the function $h_\G$ which describes the locally
rough surface $\G$ to the intensity of the corresponding far-field pattern,
$|u^\infty(\hat{x};d_1,d_2,k)|^2$ in $L^2(\Sp^1_+)$ of the scattered wave $u^s(x;d_1,d_2,k)$ of the
scattering problem (\ref{eq1_nr})-(\ref{eq3_nr}), that is,
\be\label{FFO}
\big(\mathcal{F}_{(d_1,d_2,k)}[h_\G]\big)(\hat{x})=|u^\infty(\hat{x};d_1,d_2,k)|^2,\quad
\hat{x}\in\Sp^1_+
\en
Similarly, given the incident wave $u^i=u^i(x;d,k)$ with $d\in\Sp^1_-$, define the near-field operator
$\mathcal{N}_{(d,k)}$ mapping the function $h_\G$ to the intensity of the corresponding near-field,
$|u^{near}(x;d,k)|^2$ in $L^2(\G_{H,L})$, of the scattering problem (\ref{eq1_nr})-(\ref{eq3_nr}), that is,
\be\label{eq5}
\big(\mathcal{N}_{(d,k)}[h_\G]\big)(x)=|u^{near}(x;d,k)|^2,\quad x\in\G_{H,L}
\en

Our Newton-type iterative algorithm consists in solving the nonlinear and ill-posed equation (\ref{FFO})
or (\ref{eq5}) for the unknown $h_\G$. To this end, we need to investigate the Frechet differentiability of
$\mathcal{F}_{(d_1,d_2,k)}$ and $\mathcal{N}_{(d,k)}$ at $h_\G.$
Now, let $\triangle h\in C^2_{0,R}(\R)\coloneqq\{h\in C^2(\R):\textrm{supp}(h)\subset(-R,R)\}$ be a small
perturbation and let $\G_{\triangle h}\coloneqq \{(x_1,h_\G(x)+\triangle h(x)):x_1\in\R\}$ denote the
corresponding boundary for $h_\G(x)+\triangle h(x)$.
Then $\mathcal{F}_{(d_1,d_2,k)}$ is said to be Frechet differentiable at $h_\G$ if there exists a linear
bounded operator $\mathcal{F}'_{(d_1,d_2,k)}:C^2_{0,R}(\R)\rightarrow L^2(\Sp^1_+)$ such that
\ben
\left\|\mathcal{F}_{(d_1,d_2,k)}[h_\G+\triangle h]-\mathcal{F}_{(d_1,d_2,k)}[h_\G]
-\mathcal{F}'_{(d_1,d_2,k)}[h_\G;\triangle h]\right\|_{L^2(\Sp^1_+)}=o\left(||\triangle h||_{C^2(\R)}\right)
\enn
The Frechet differentiable at $h_\G$ of $\mathcal{N}_{(d,k)}$ is defined similarly.
We have the following theorem.

\begin{theorem}\label{th1_nr}
(i) Given the incident wave $u^i=u^i(x;d_1,d_2,k)$ with $d_1,d_2\in\Sp^1_-$ and $d_1\neq d_2$,
let $u(x)=u^i(x)+u^r(x)+u^s(x)$, where $u^s$ solves the scattering problem $(\ref{eq1_nr})-(\ref{eq3_nr})$
with the boundary data $f=-(u^i+u^r)$. If $h_\G\in C^2(\R)$, then $\mathcal{F}_{(d_1,d_2,k)}$ is Frechet
differentiable at $h_\G$ with the derivative given by
$\mathcal{F}'_{(d_1,d_2,k)}[h_\G;\triangle h]=2\Rt\left[\ov{u^\infty}(u')^\infty\right]$ for
$\triangle h\in C^2_{0,R}(\R)$.
Here, $(u')^\infty$ is the far-field pattern of $u'$ which solves the scattering problem
$(\ref{eq1_nr})-(\ref{eq3_nr})$ with the boundary data $f=-(\triangle h\cdot\nu_2){\pa u}/{\pa\nu}$,
where $\nu_2$ is the second component of the unit normal $\nu$ on $\G$ directed into the infinite domain $D_+$.

(ii) Given the incident wave $u^i=u^i(x;d,k)$ with $d\in\Sp^1_-$, let $u$ and $u^{near}$ be
the total and near-field, respectively, corresponding to the scattering problem $(\ref{eq1_nr})-(\ref{eq3_nr})$.
If $h_\G\in C^2(\R)$, then $\mathcal{N}_{(d,k)}$ is Frechet differentiable at $h_\G$ with the derivative
given by $\mathcal{N}'_{(d,k)}[h_\G;\triangle h]=2\Rt\left[\ov{u^{near}}u'\right]$
for $\triangle h\in C^2_{0,R}(\R)$, where $u'$ is the solution of the problem ($(\ref{eq1_nr})-(\ref{eq3_nr})$
with the boundary data $f=-(\triangle h\cdot\nu_2){\pa u}/{\pa\nu}$.
\end{theorem}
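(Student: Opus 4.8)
The plan is to reduce the Fréchet differentiability of the intensity maps $\mathcal{F}_{(d_1,d_2,k)}$ and $\mathcal{N}_{(d,k)}$ to the Fréchet differentiability of the underlying (phased) far-field and near-field maps, and then to identify the derivative of that phased map via the standard domain-derivative argument. Concretely, write $\mathcal{F}_{(d_1,d_2,k)}[h_\G] = |\mathcal{G}[h_\G]|^2$ where $\mathcal{G}[h_\G](\hat{x}) := u^\infty(\hat{x};d_1,d_2,k)$ is the (complex-valued) far-field pattern viewed as an element of $L^2(\Sp^1_+)$. If I can show $\mathcal{G}$ is Fréchet differentiable at $h_\G$ with derivative $\mathcal{G}'[h_\G;\triangle h] = (u')^\infty$, then since $z \mapsto |z|^2$ is smooth on $\C$ with differential $z\mapsto 2\Rt(\bar z\,\cdot)$, the chain rule gives $\mathcal{F}'_{(d_1,d_2,k)}[h_\G;\triangle h] = 2\Rt[\overline{u^\infty}\,(u')^\infty]$, with the $o(\|\triangle h\|_{C^2})$ estimate in $L^2(\Sp^1_+)$ following from the boundedness of $\mathcal{G}$ near $h_\G$ and the uniform-in-$\hat x$ bounds provided by the representation (\ref{eq14_nr}). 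Part (ii) is completely analogous with $\mathcal{N}_{(d,k)}[h_\G] = |u^{near}|^2$ on $\G_{H,L}$, using that $u^{near} = u^r + u^s$ and that $u^r$ does not depend on $h_\G$, so the derivative of the near-field map equals the derivative of $u^s$ restricted to $\G_{H,L}$, which is $u'$.

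The core of the argument is therefore establishing differentiability of the map $h_\G \mapsto u^s$ (hence $h_\G \mapsto u^\infty$ and $h_\G \mapsto u^s|_{\G_{H,L}}$) and computing its derivative. I would do this through the integral equation formulation of Section~\ref{se1}: by \cite[Theorem 2.4]{ZhangZhang2013} the density $\varphi$ solving $P\varphi = g$ depends on $h_\G$ through the geometry of $\pa D_R^-$ and the boundary data $g$, the operator $P^{-1}$ is bounded, and the dependence of the kernels of $S_k, K_k, S_k^{re}, K_k^{re}$ and of $g$ on $h_\G$ is smooth in the relevant parametrized sense. An application of the implicit function theorem (or a direct Neumann-series perturbation of $P$) then yields that $\varphi$, and hence $u^s$ via (\ref{eq10_nr}) and $u^\infty$ via (\ref{eq14_nr}), are Fréchet differentiable in $h_\G \in C^2(\R)$ with $\triangle h$ in $C^2_{0,R}(\R)$. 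To identify the derivative explicitly, I would instead use the classical domain-derivative (shape-derivative) characterization: the Helmholtz equation and radiation condition are preserved, while the Dirichlet condition $u = 0$ on the perturbed surface $\G_{\triangle h}$, differentiated along the normal perturbation $\triangle h\,\nu_2$, produces the boundary condition $u' = -(\triangle h\cdot \nu_2)\,\pa u/\pa\nu$ on $\G$ for the derivative $u'$. This is the standard computation (cf. Kirsch, Hettlich, Potthast) adapted to the locally rough surface geometry, using that $u\in C^2(\overline{D_+})$ near $\G$ and that $u$ vanishes on $\G$ so only its normal derivative survives.

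The main obstacle I anticipate is not the formal chain-rule step but the rigorous control of the remainder in the unbounded-domain, graded-mesh setting: one must verify that the integral operator $P$ and the data $g$ depend on $h_\G$ in a genuinely $C^1$ (Fréchet) manner as maps into the appropriate function spaces on $\pa D_R^-$, uniformly accounting for the corner points $x_A, x_B$ and the non-smooth junction of $\G_R$ with $\pa B_R^-$, so that the inverse $P^{-1}$ and the representation formulae transfer this regularity to $u^s$, $u^\infty$ and to $u^s|_{\G_{H,L}}$. Once that is in place, deducing the $o(\|\triangle h\|_{C^2})$ estimate for $|u^\infty|^2$ in $L^2(\Sp^1_+)$ (resp.\ for $|u^{near}|^2$ in $L^2(\G_{H,L})$) is routine: one expands $|u^\infty[h_\G+\triangle h]|^2 - |u^\infty[h_\G]|^2 = 2\Rt[\overline{u^\infty}(u^\infty[h_\G+\triangle h] - u^\infty[h_\G])] + |u^\infty[h_\G+\triangle h]-u^\infty[h_\G]|^2$, replaces the first difference by $(u')^\infty$ up to a higher-order term, and bounds the quadratic term using the local Lipschitz dependence of $u^\infty$ on $h_\G$. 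I would also remark that $(u')^\infty$ and $u'$ are well defined because the data $f = -(\triangle h\cdot\nu_2)\,\pa u/\pa\nu$ lies in $C(\G_R)$ with compact support, so the scattering problem (\ref{eq1_nr})--(\ref{eq3_nr}) with this boundary datum is uniquely solvable by the same theory.
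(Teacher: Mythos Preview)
Your proposal is correct and follows the standard domain-derivative route (chain rule for $z\mapsto|z|^2$ composed with the Fr\'echet derivative of the phased scattering map, the latter identified via the shape-derivative boundary condition). The paper itself gives no argument beyond citing \cite{BaoLiLv13} and asserting the proof is analogous, so your outline is in fact more detailed than what the paper supplies; the approach you describe is precisely the one underlying that reference.
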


\begin{proof}
The proof is similar to that of Theorems 2.1 and 2.2 in \cite{BaoLiLv13} for
inverse diffraction grating problems with appropriate modifications.
\end{proof}

\section{Reconstruction algorithm}\label{se3}
\setcounter{equation}{0}

In this section, we describe the Newton-type iteration algorithm for the inverse problem (IP1).
For the inverse problem (IP2), the approach is similar, so we omit it.

Let $d_{1l},d_{2l}\in\Sp^1_-,l=1,2,\ldots,n_d,$ be the incident directions and let $k>0$
be the fixed wave number. Assume that $h^{app}$ is an approximation to the function $h_\G$.
We replace (\ref{FFO}) by the linearized equations:
\be\label{LFFO}
\big(\mathcal{F}_{(d_{1l},d_{2l},k)}[h^{app}]\big)(\hat{x})
+\big(\mathcal{F}'_{(d_{1l},d_{2l},k)}[h^{app};\triangle h]\big)(\hat{x})
\approx|u^\infty(\hat{x};d_{1l},d_{2l},k)|^2,\quad l=1,2,\ldots,n_d,
\en
where $\triangle h$ is the update function to be determined. Our Newton iterative algorithm consists in
iterating the equations (\ref{LFFO}) by using the Levenberg-Marquardt algorithm (see, e.g. \cite{Hohage1999}).

In the numerical examples, the noisy phaseless far-field pattern
$|u^\infty_{\delta}(\hat{x_j};d_{1l},d_{2l},k)|,j=1,2,\ldots,n_f,l=1,2,\ldots,n_d,$
are considered as the measurement data which satisfies
\ben
\big\||u^\infty_{\delta}(\,\cdot\,;d_{1l},d_{2l},k)|^2-|u^\infty(\,\cdot\,;d_{1l},d_{2l},k)|^2\big\|_{L^2(\Sp^1_+)}
\leq\delta\big\||u^\infty(\,\cdot\,;d_{1l},d_{2l},k)|^2\big\|_{L^2(\Sp^1_+)},\quad l=1,2,\ldots,n_d,
\enn
where $\delta>0$ is called the noise ratio and the observation directions $\hat{x_j},j=1,2,\ldots,n_f,$
are the equidistant points on $\Sp^1_+$.
In practical computation, $h^{app}$ has to be taken from a finite-dimensional subspace $R_M$.
Here, $R_M=\textrm{span}\{\phi_{1,M},\phi_{2,M},\cdots,\phi_{M,M}\}$ is a subspace of $C^2_{0,R}(\R)$,
where $\phi_{j,M},j=1,2,\ldots,M,$ are spline basis functions with support in $(-R,R)$ (see Remark \ref{re1_nr}).
Then, by the strategy in \cite{Hohage1999}, we seek an updated function
$\triangle h=\sum^M_{i=1}\triangle a_i\phi_{i,M}$ in $R_M$ so as to solve the minimization problem:
\be\no
&&\min_{\triangle a_i}\left\{\sum_{l=1}^{n_d}\sum_{j=1}^{n_f}
\Big|\big(\mathcal{F}_{(d_{1l},d_{2l},k)}[h^{app}]\big)(\hat{x}_j)
+\big(\mathcal{F}_{(d_{1l},d_{2l},k)}'[h^{app};\triangle h]\big)(\hat{x}_j)\right.\\ \label{eq15_nr}
&&\qquad\qquad\left. -|u^\infty_{\delta}(\hat{x_j};d_{1l},d_{2l},k)|^2\Big|^2+\beta\sum_{i=1}^M|\triangle a_i|^2\right\}
\en
where $\beta>0$ is chosen such that
\be\label{eq16_nr}
\left[\sum_{l=1}^{n_d}\sum_{j=1}^{n_f}\Big|\big(\mathcal{F}_{(d_{1l},d_{2l},k)}[h^{app}]\big)(\hat{x}_j)
+\big(\mathcal{F}_{(d_{1l},d_{2l},k)}'[h^{app};\triangle h]\big)(\hat{x}_j)
-|u^\infty_{\delta}(\hat{x_j};d_{1l},d_{2l},k)|^2\Big|^2\right]^{\half}\no\\
=\rho\left[\sum_{l=1}^{n_d}\sum_{j=1}^{n_f}\Big|\big(\mathcal{F}_{(d_{1l},d_{2l},k)}[h^{app}]\big)(\hat{x}_j)
-|u^\infty_{\delta}(\hat{x_j};d_{1l},d_{2l},k)|^2\Big|^2\right]^{\half}
\en
for a given constant $\rho\in(0,1)$. Then the approximation function $h^{app}$ is updated by $h^{app}+\triangle h$.
Further, define the error function
\ben
Err_k:=\frac{1}{n_d}\sum_{l=1}^{n_d}
\begin{array}{c}
{\left[\sum\limits_{j=1}^{n_f}\Big|\big(\mathcal{F}_{(d_{1l},d_{2l},k)}[h^{app}]\big)(\hat{x}_j)
-|u^\infty_{\delta}(\hat{x_j};d_{1l},d_{2l},k)|^2\Big|^2\right]^{1/2}} \\
\hline
{\left[\sum\limits_{j=1}^{n_f}|u^\infty_{\delta}(\hat{x_j};d_{1l},d_{2l},k)|^4\right]^{1/2}}
\end{array}
\enn
Then the iteration is stopped if $Err_k< \tau\delta$, where $\tau>1$ is a fixed constant.

\begin{remark}\label{re1_nr} {\rm
For a positive integer $M\in\N^+$ let $h=2R/(M+5)$ and $t_i=(i+2)h-R$.
Then the spline basis functions of $R_M$ are defined by
$\phi_{i,M}(t)=\phi((t-t_i)/h),i=1,2,\ldots,M,$ where
\ben
\phi(t):=\sum^{k+1}_{j=0}\frac{(-1)^j}{k!}\left(\begin{array}{c}k+1\\
                                      j\end{array}\right)
       \left(t+\frac{k+1}{2}-j\right)^k_+
\enn
with $z^k_+=z^k$ for $z\geq0$ and $=0$ for $z<0$.
In this paper, we choose $k=4$, that is, $\phi$ is the quartic spline function.
Note that $\phi_{i,M}\in C^3(\R)$ with support in $(-R,R)$. See \cite{DeBoor} for details.
}
\end{remark}

\begin{remark}\label{re2_nr} {\rm
The integral equation method proposed in \cite{ZhangZhang2013} (cf. Section \ref{se1}) is used to solve
the direct scattering problem in each iteration.
}
\end{remark}

Our recursive Newton iteration algorithm in frequencies is given in Algorithm \ref{alg1} for the inverse problem (IP1).

\begin{algorithm}\label{alg1}
Given the phaseless far-field data $|u^\infty_{\delta}(\hat{x_j};d_{1l},d_{2l},k_m)|$,
$j=1,2,\ldots,n_f$, $l=1,2,\ldots,n_d$, $m=1,2,\ldots,N$ with $d_{1l},d_{2l}\in\Sp^1_-$, $d_{1l}\neq d_{2l}$
and $k_1<k_2<\cdots<k_N$.

1) Let $h^{app}$ be the initial guess of $h_\G$. Set $i=0$ and go to Step 2).

2) Set $i=i+1$. If $i>N$, then stop the iteration; otherwise, set $k=k_i$ and go to Step 3).

3) If $Err_k<\tau\delta$, go to Step 2); otherwise, go to Step 4).

4) Solve (\ref{eq15_nr}) with the strategy (\ref{eq16_nr}) to get an updated function $\triangle h$.
Let $h^{app}$ be updated by $h^{app}+\triangle h$ and go to Step 3).
\end{algorithm}

\begin{remark}\label{re3_nr} {\rm
Since $\mathcal{F}'_{(d_1,d_2,k)}[0;\triangle h]=0$ for any $d_1,d_2\in\Sp^1_-$, $k>0$ and
$\triangle h\in C^2_{0,R}(\R^2)$, then the initial guess $h^{app}$ should not be zero for
the inverse problem (IP1). However, in all the numerical examples for the inverse problem (IP2),
the initial guess of $h_\G$ is chosen to be $h^{app}=0$.
}
\end{remark}

\section{Numerical experiments}\label{se4}

In this section, several numerical experiments are carried out to illustrate the effectiveness of
the inversion algorithm. The following assumptions are made in all numerical experiments.

1) For each example we use multi-frequency data with the wave numbers $k=1,3,\ldots,2N-1,$ where $N$
is the total number of frequencies.

2) To generate the synthetic data, the integral equation method is used to the direct scattering problem.
For the inverse problem (IP1), the intensity of the far-field pattern is measured along the upper half-aperture
(that is, the measurement angle is between $0$ and $\pi$) with $200$ equidistant measurement points.
For the inverse problem (IP2), the intensity of the near-field is measured on the straight line segment
$\G_{1,1}:=\{(x_1,1)\;:\;x_1\in[-1,1]\}$ also with $200$ equidistant measurement points.
The corresponding noisy data $|u^\infty_{\delta}|$ and $|u^{near}_{\delta}|$ are simulated as
$|u^\infty_{\delta}|^2=|u^\infty|^2(1+\delta\zeta)$ and $|u^{near}_{\delta}|^2=|u^{near}|^2(1+\delta\zeta)$,
respectively, where $\zeta$ is a normally distributed random number in $[-1,1].$
In all the numerical examples, the noise level is taken as $\delta=5\%$.

3) The parameters are taken as $\rho=0.8$ and $\tau=1.5$.

4) In all the numerical examples, the local perturbation of the infinite plane is assumed to be restricted to
the range $-1<x_1<1$, that is, $\textrm{supp}(h_\G)\in(-1,1)$.

\textbf{Example 1 (Shape reconstruction).} We first demonstrate numerically that the location of the local
perturbation on the infinite plane can not be determined from the intensity far-field data if only one plane
wave is used as the incident field, that is, $u^i(x)=u^i(x;d,k)$. The locally rough surface is given by
\ben
h_\G(x_1)=\phi(({x_1+0.2})/{0.3}),
\enn
where $\phi$ is defined as in Remark \ref{re1_nr}, and the incident direction is taken as
$d=(\sin(-\pi/6),-\cos(-\pi/6))$. The number of the spline basis functions is chosen to be $M=10$,
and the total number of frequencies used is $N=10$. Further, we choose two different initial guesses
for the iteration inversion algorithm: 1) $h^{app}=0.1\sum_{i=2}^4\phi_{i,M}$, and
2) $h^{app}=0.1\sum_{i=5}^7\phi_{i,M}$, where the corresponding reconstructed curves
are denoted as "Reconstructed curve 1" and "Reconstructed curve 2".
Figure \ref{fig4-1} presents the initial curves and the reconstructed curves at the wavenumbers $k=1,7,19$.
From the reconstructions presented in Figure \ref{fig4-1} it is seen that
the shape of the local perturbation can be well reconstructed numerically, but its reconstructed location
is translated along the $x_1$ direction, depending on the choice of initial guess. This means that
the location of the local perturbation can not be recovered from the intensity far-field data
if one plane wave is taken as the incident field.

\begin{figure}[htbp]
  \centering
  \subfigure{\includegraphics[width=3in]{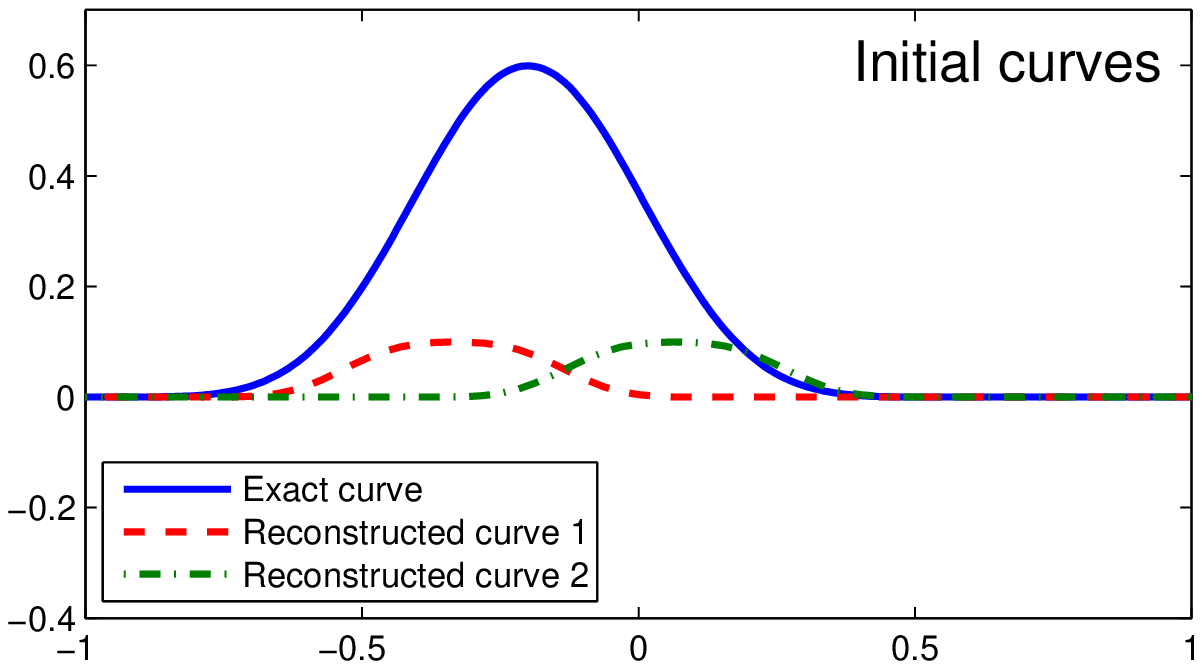}}
  \hspace{-0.35in}
  \vspace{-0.5in}
  \subfigure{\includegraphics[width=3in]{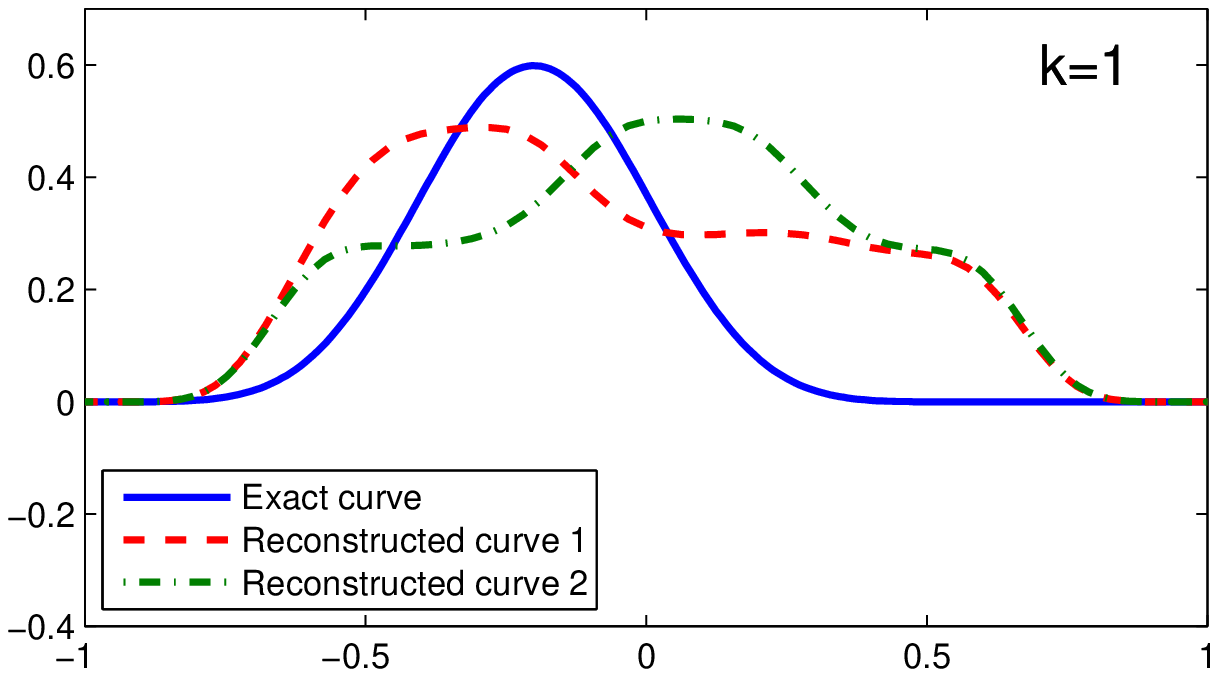}}
  \vspace{0in}
  \hspace{-0.35in}
  \subfigure{\includegraphics[width=3in]{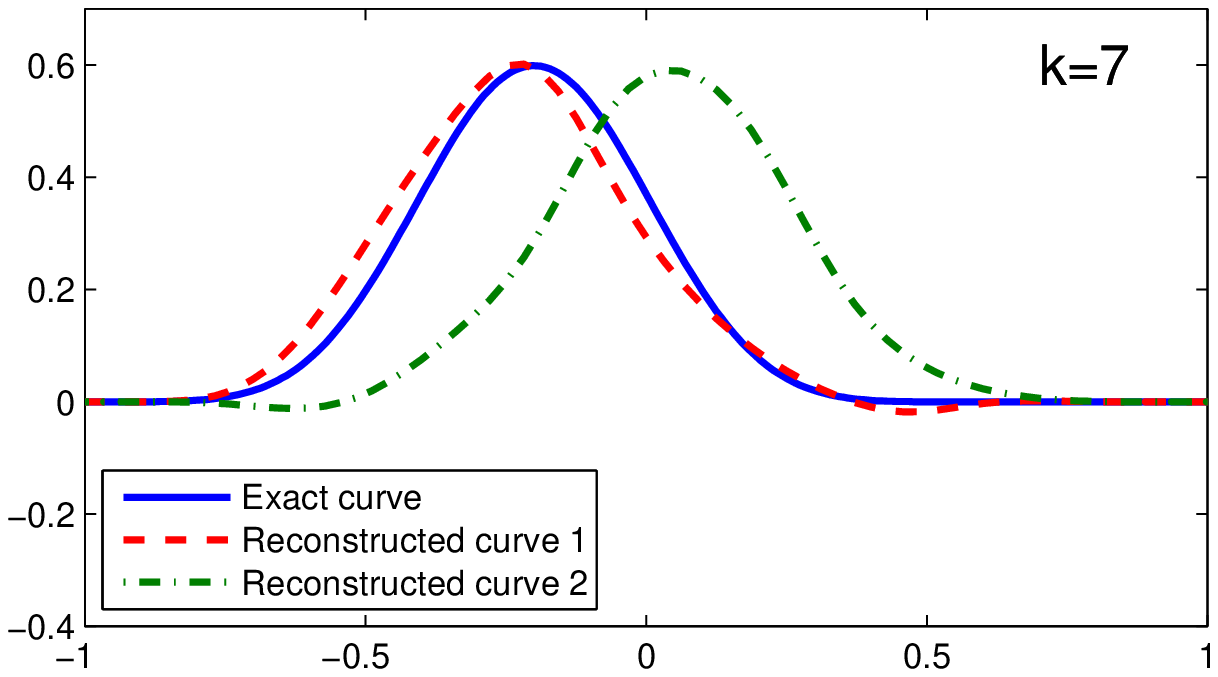}}
  \vspace{0in}
  \hspace{-0.35in}
  \subfigure{\includegraphics[width=3in]{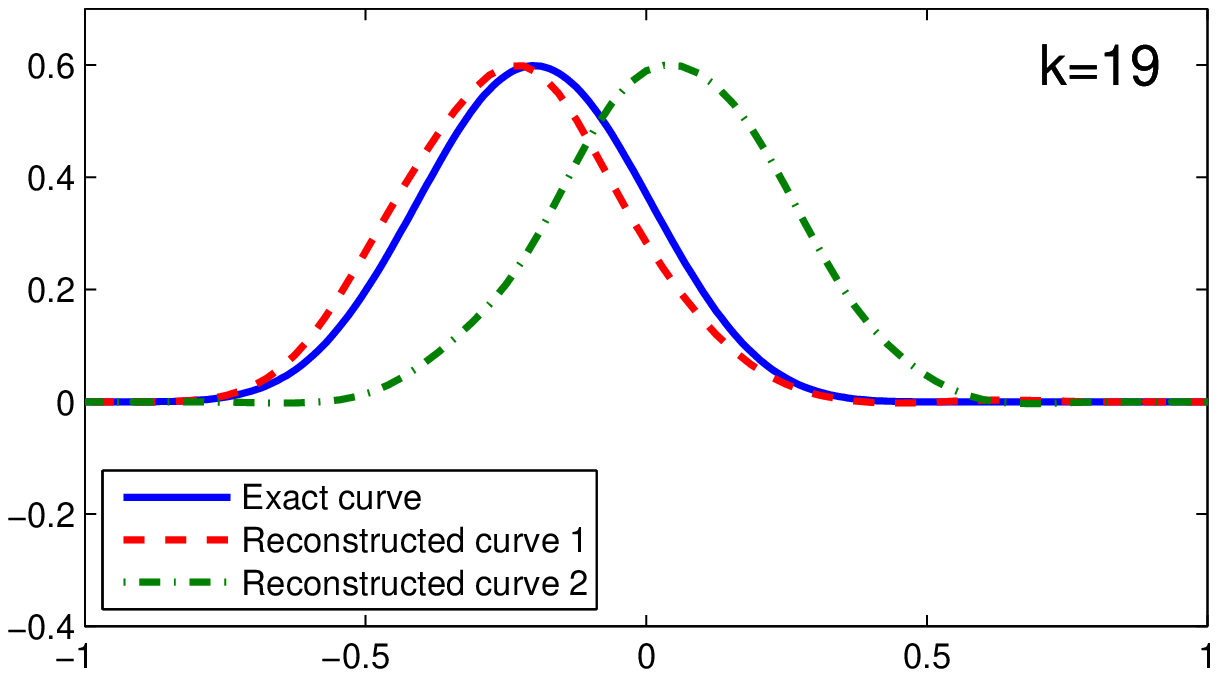}}
  \vspace{-0.5in}
\caption{Shape reconstruction of a locally rough surface from $5\%$ noisy intensity far-field data
with one incident plane wave $u^i=u^i(x;d,k)$, where $d=(\sin(-\pi/6),-\cos(-\pi/6))$.
Here, the initial and reconstructed curves are presented at the wave numbers $k=1,7,19.$
}\label{fig4-1}
\end{figure}

\textbf{Example 2 (Shape and location reconstruction).} We consider the same locally rough surface as
in Example 1. Here, we demonstrate that by using a superposition of two plane waves with different
directions as the incident field, that is, $u^i=u^i(x;d_1,d_2,k)$ with $d_1\not=d_2$,
both the location and the shape of the local perturbation on the rough surface can be reconstructed from
the corresponding intensity far-field data.
In the numerical experiment, the two incident directions are taken as $d_1=(\sin(-\pi/6),-\cos(-\pi/6))$ and
$d_2=(\sin(\pi/6),-\cos(\pi/6))$, the number of the spline basis functions is chosen to be $M=10$, and
the total number of frequencies used is $N=13$. Further, we use the same initial guesses and
the same terms for the corresponding reconstructed curves of $h_\G$ as in Example 1.
The initial and reconstructed curves are presented in Figure \ref{fig4-2}, at the wavenumbers $k=1,7,25,$
respectively. From Figures \ref{fig4-1} and \ref{fig4-2} it can be seen that both the location and the shape
of the local perturbation are accurately reconstructed with two different initial guesses.

\begin{figure}[htbp]
  \centering
  \subfigure{\includegraphics[width=3in]{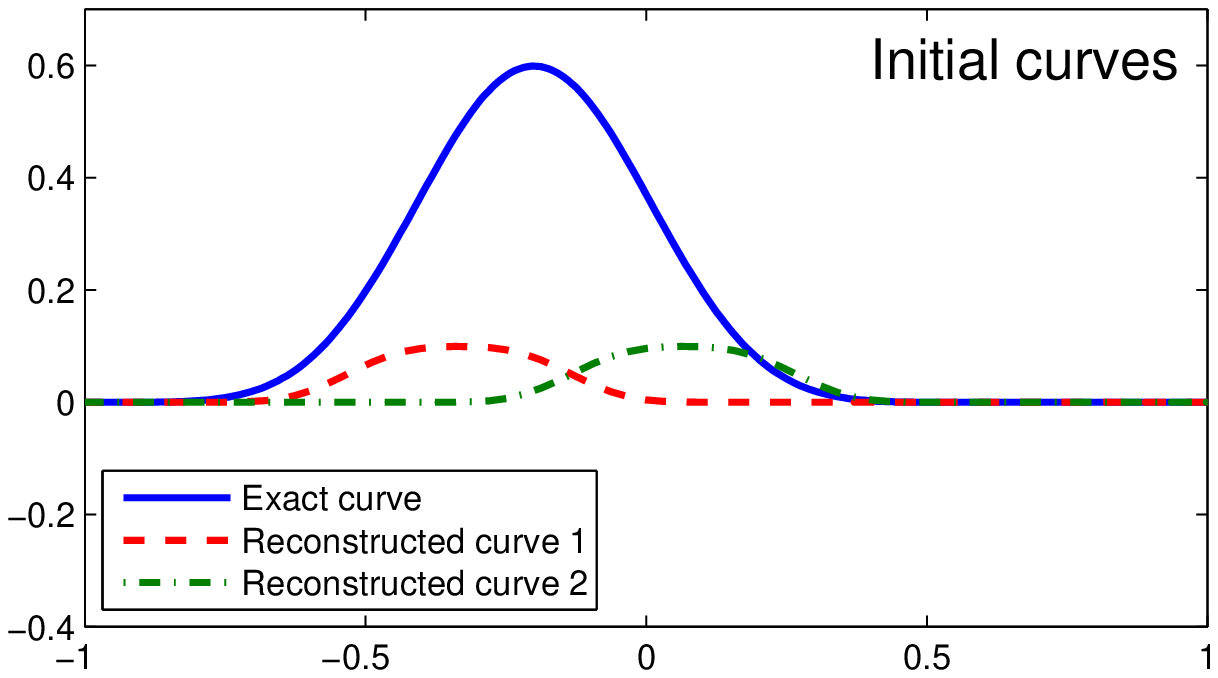}}
  \hspace{-0.35in}
  \vspace{-0.5in}
  \subfigure{\includegraphics[width=3in]{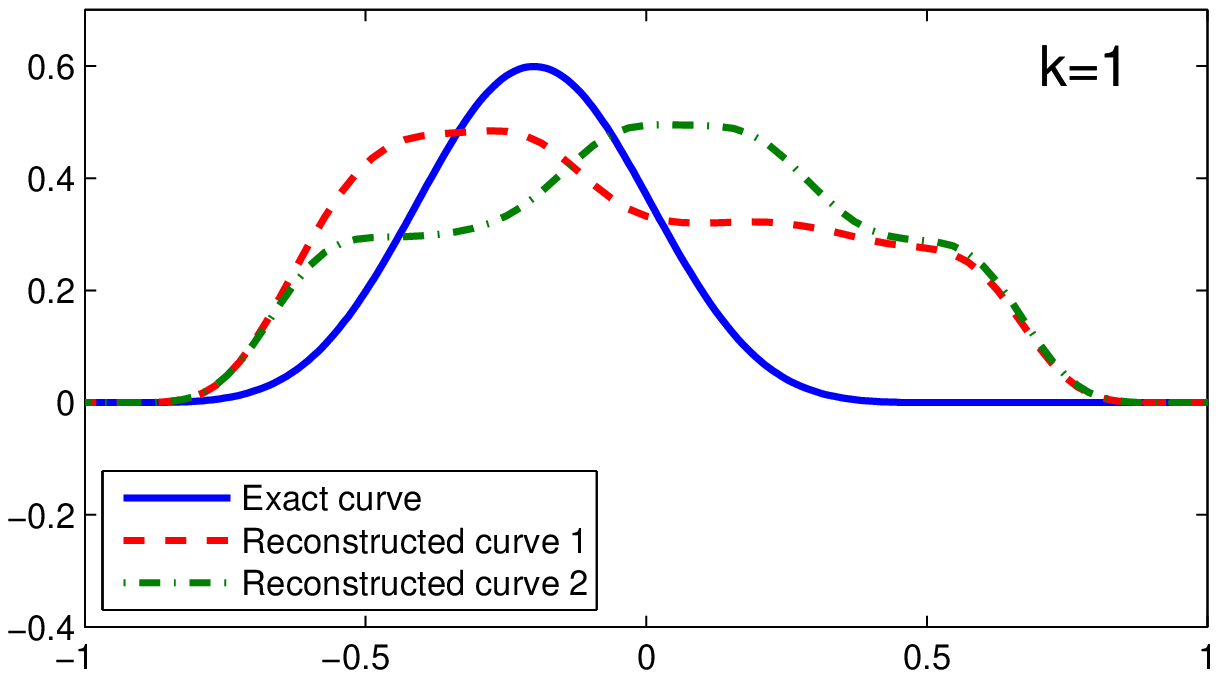}}
  \vspace{0in}
  \hspace{-0.35in}
  \subfigure{\includegraphics[width=3in]{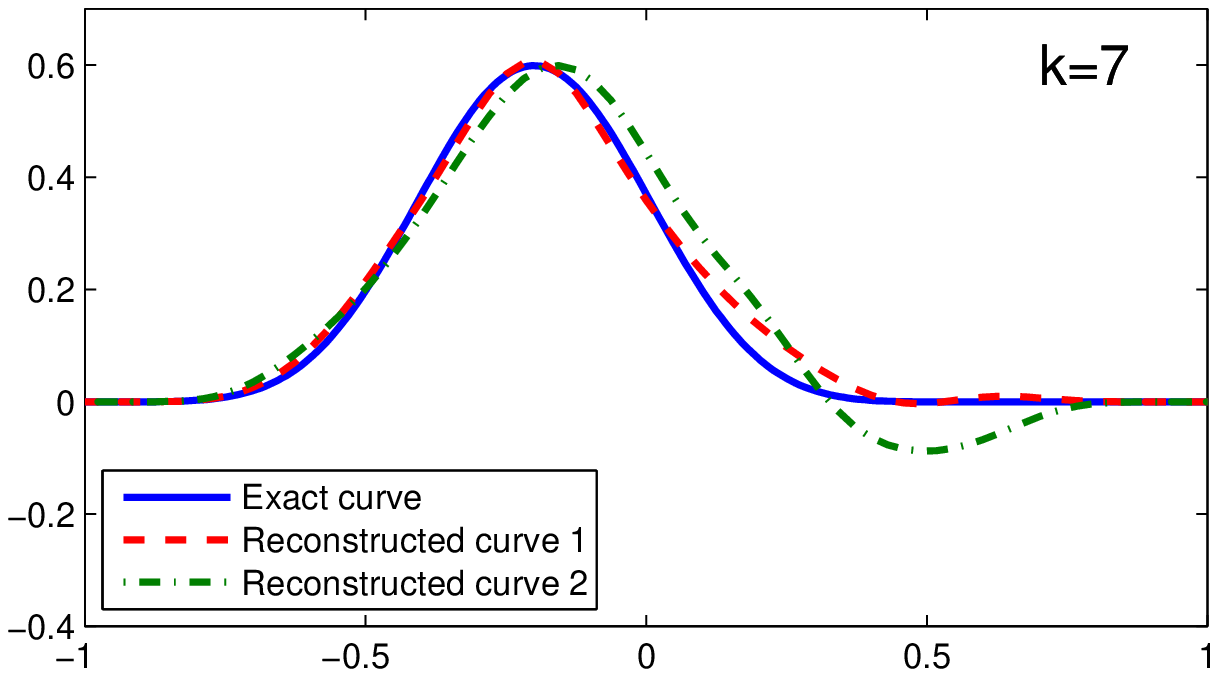}}
  \vspace{0in}
  \hspace{-0.35in}
  \subfigure{\includegraphics[width=3in]{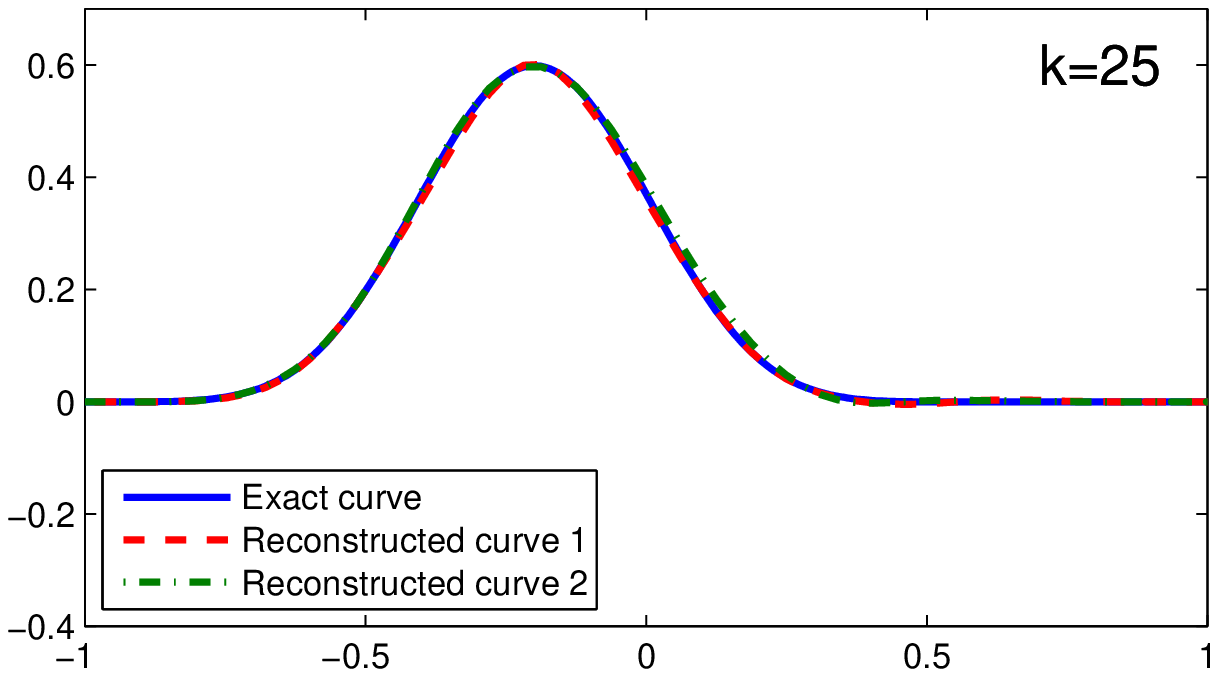}}
  \vspace{-0.5in}
\caption{Location and shape reconstruction of a locally rough surface from $5\%$ noisy intensity far-field data
with a superposition of two plane waves as the incident field $u^i=u^i(x;d_1,d_2,k)$,
where $d_1=(\sin(-\pi/6),-\cos(-\pi/6))$ and $d_2=(\sin(\pi/6),-\cos(\pi/6))$.
Here, the initial and reconstructed curves are presented at the wave numbers $k=1,7,25$.
}\label{fig4-2}
\end{figure}

\textbf{Example 3 (Piecewise linear curve).} We now consider the inverse problem (IP1) with a piecewise linear local perturbation
(the solid line in Figure \ref{c5fig4-9}). The number of the spline basis functions is chosen as $M=40$, the total number of
frequencies used is taken to be $N=18$, and the initial guess for the rough surface profile $h_\G$ is chosen as
$h^{app}=0.05\sum_{i=5}^{15}\phi_{i,M}$. Figure \ref{c5fig4-9} presents the initial and reconstructed curves
at the wavenumbers $k=7,21,35$, respectively, obtained by using the intensity far-field data with $5\%$ noise,
corresponding to the incident wave $u^i=u^i(x;d_1,d_2,k)$ with
$d_1=(\sin(-\pi/6),-\cos(-\pi/6))$ and $d_2=(\sin(\pi/6),-\cos(\pi/6))$.
The reconstruction results show that the piecewise linear surface profile is very accurately reconstructed even
at the corners of the boundary.

\begin{figure}[htbp]
  \centering
  \subfigure{\includegraphics[width=2.8in]{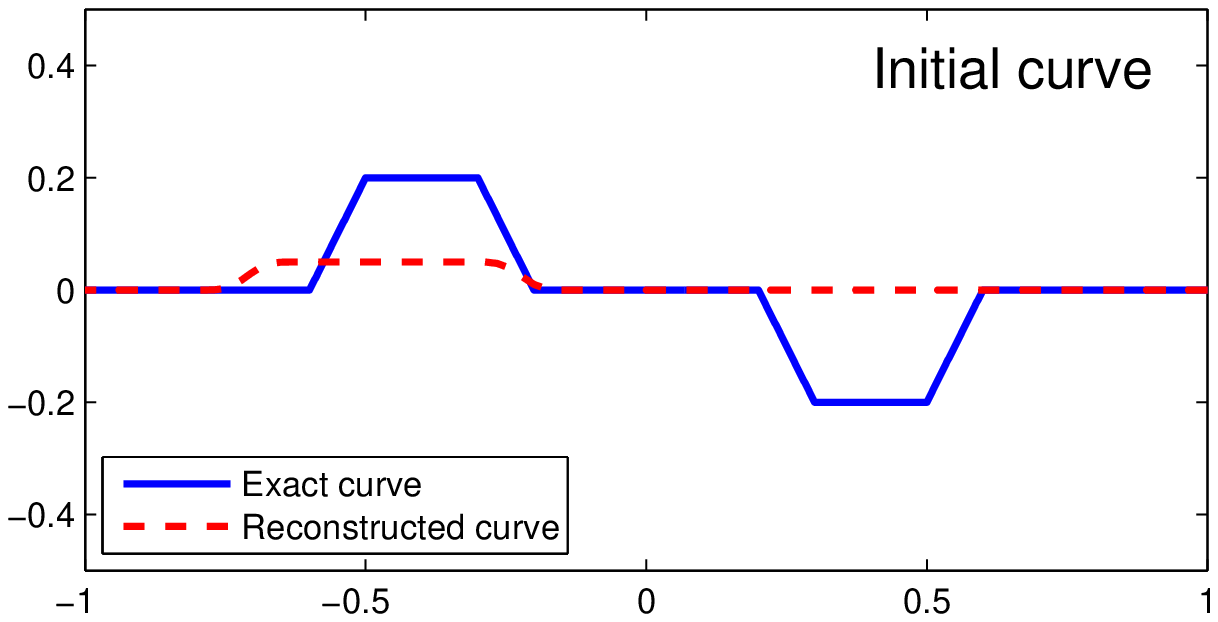}}
  \hspace{-0.35in}
  \vspace{-0.5in}
  \subfigure{\includegraphics[width=2.8in]{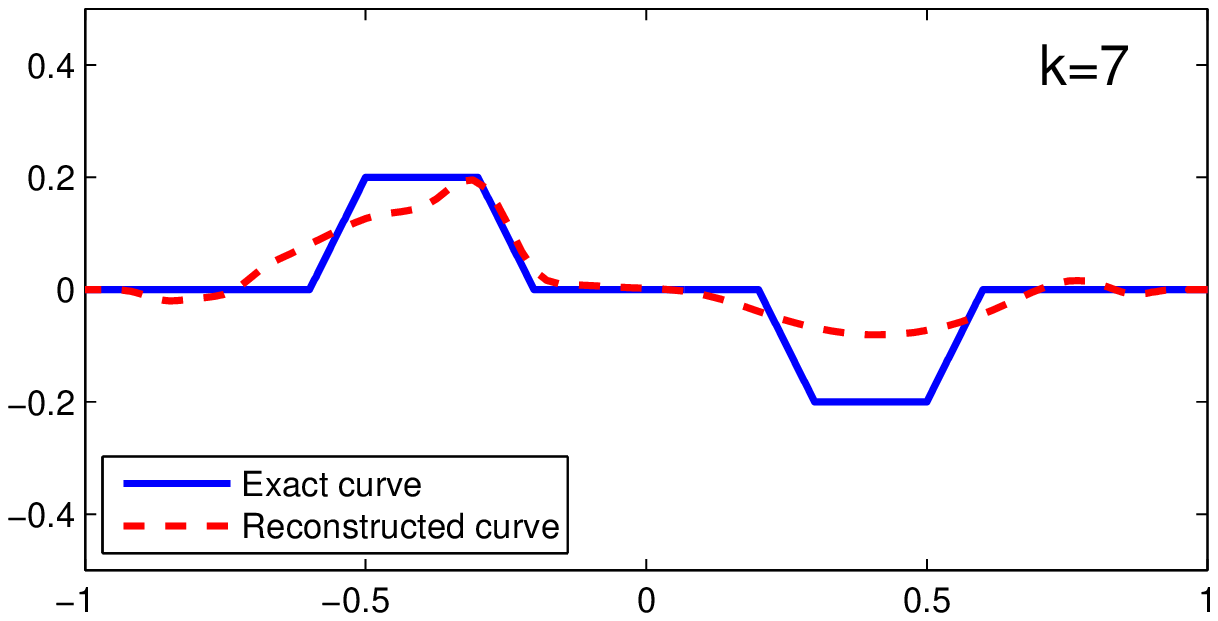}}
  \vspace{0in}
  \hspace{-0.35in}
  \subfigure{\includegraphics[width=2.8in]{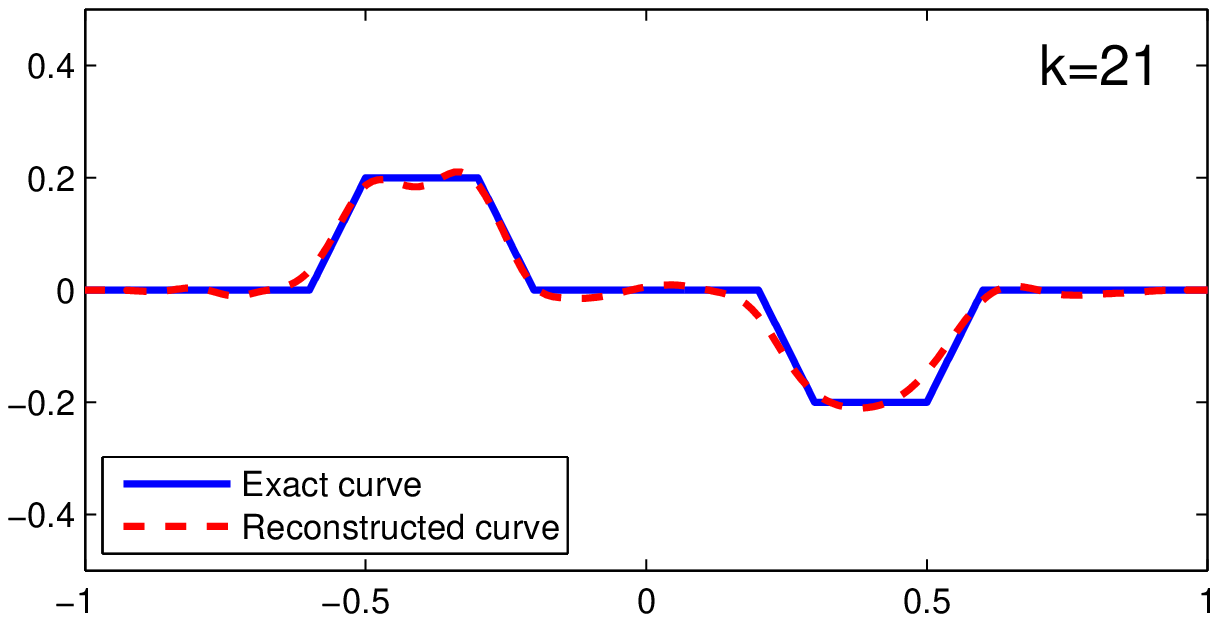}}
  \vspace{0in}
  \hspace{-0.35in}
  \subfigure{\includegraphics[width=2.8in]{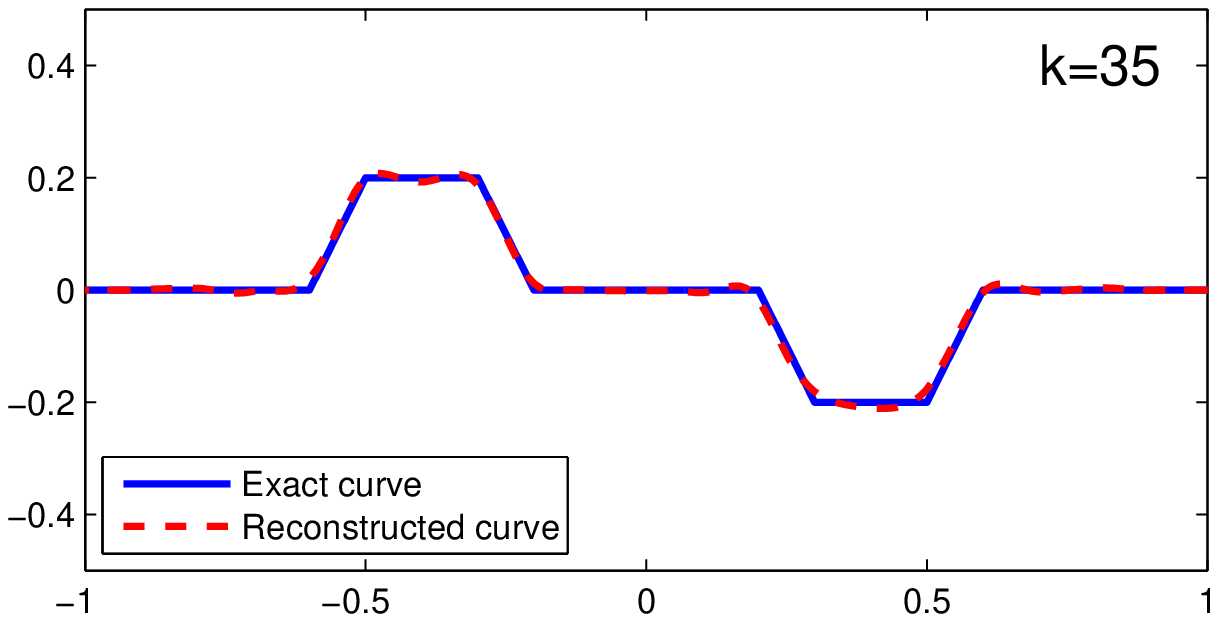}}
  \vspace{-0.5in}
\caption{Location and shape reconstruction of a piecewise linear local perturbation from $5\%$ noisy intensity
far-field data with a superposition of two plane waves as the incident field $u^i=u^i(x;d_1,d_2,k)$,
where $d_1=(\sin(-\pi/6),-\cos(-\pi/6))$ and $d_2=(\sin(\pi/6),-\cos(\pi/6))$.
Here, the initial and reconstructed curves are presented at the wave numbers $k=7,21,35$.
}\label{c5fig4-9}
\end{figure}

\textbf{Example 4 (multi-scale curve).} Consider the inverse problem (IP1) again with a multi-scale surface profile given by
\ben
h_\G(x_1)=\left\{\begin{array}{ll}
\ds \exp\left[16/(25x_1^2-16)\right]\left[0.5+0.1\sin(16\pi x_1)\right], &|x_1|<4/5\\
\ds 0, &|x_1|\geq4/5.
\end{array}\right.
\enn
This function has two scales: the macro-scale represented by the function $0.5\exp[16/(25x_1^2-16)]$ and the micro-scale
represented by the function $0.1\exp[16/(25x_1^2-16)]\sin(16\pi x_1)$.
For the inverse problem, the number of the spline basis functions is chosen to be $M=40$, the total number of
frequencies used is $N=30$, and the initial guess for $h_\G$ is $0.05\sum_{i=10}^{30}\phi_{i,M}$.
Figure \ref{fig4-6} presents the initial curve and the reconstructed curves at $k=19,33,39,49,59$,
obtained by using the intensity far-field data with $5\%$ noise, with respect to the incident wave
$u^i=u^i(x;d_1,d_2,k)$, where $d_1=(\sin(-\pi/6),-\cos(-\pi/6))$ and $d_2=(\sin(\pi/6),-\cos(\pi/6))$.
It is observed from Figure \ref{fig4-6} that the macro-scale of $h_\G$ is recovered at low frequencies (e.g. $k=19$),
but the micro-scale of $h_\G$ is not recovered completely even in the case when data with higher frequencies are used.

In order to get a better reconstruction, more measurement data are used.
Figure \ref{fig4-3} presents the initial and reconstructed curves at $k=19,33,39,49,59,$
obtained by using more intensity far-field data generated with the incident waves $u^i(x)=u^i(x;d_{1l},d_{2l},k),$ $l=1,2$,
where $d_{11}=(\sin(-\pi/6),-\cos(-\pi/6)),d_{21}=(0,-1),d_{12}=(\sin(\pi/6),-\cos(\pi/6))$ and $d_{22}=(0,-1)$.
Compared with Figure \ref{fig4-6}, the micro-scale of the surface profile $h_\G$ is accurately recovered.

\begin{figure}[htbp]
  \centering
  \subfigure{\includegraphics[width=3in]{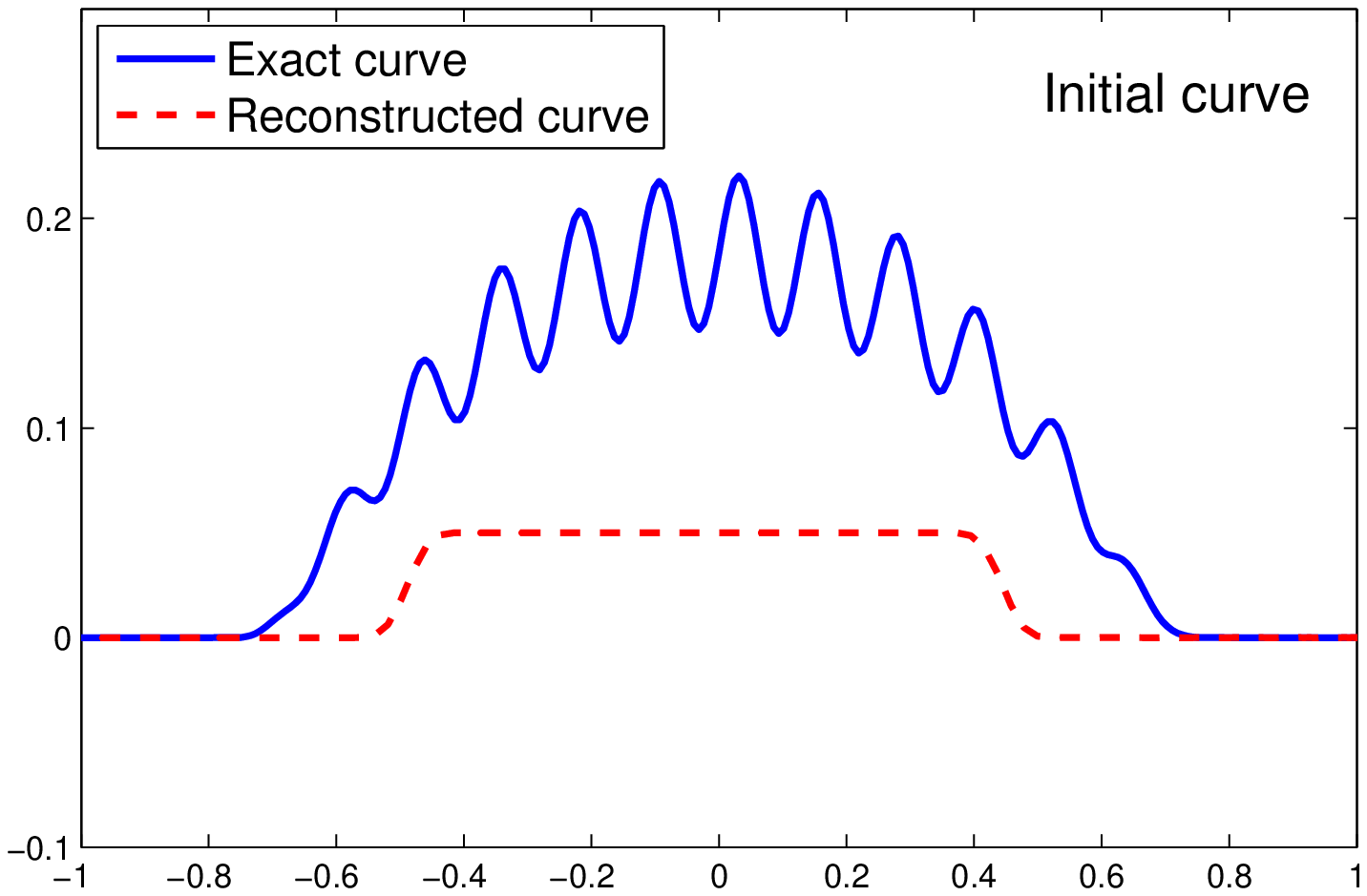}}
  \hspace{-0.35in}
  \vspace{0in}
  \subfigure{\includegraphics[width=3in]{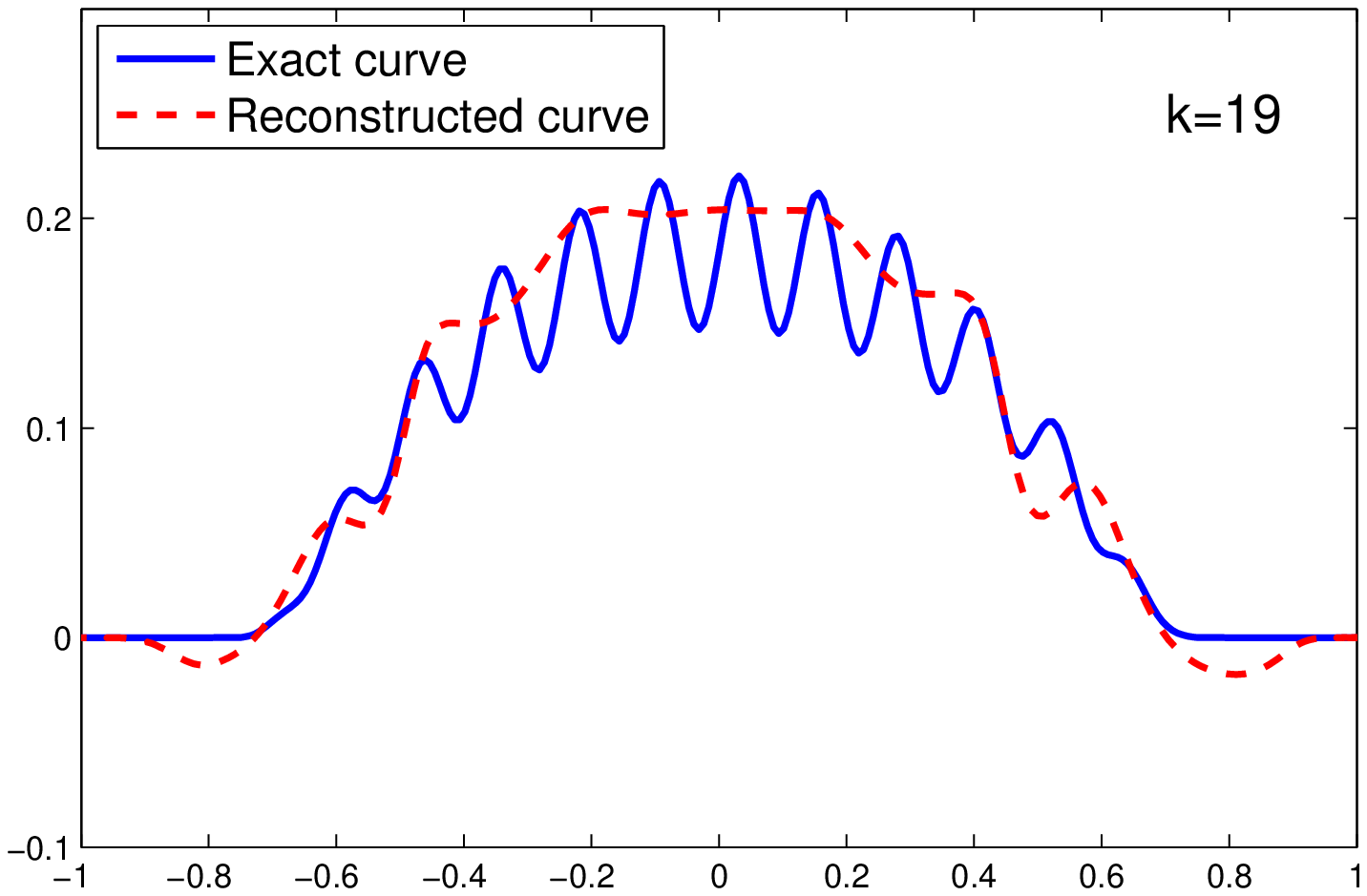}}
  \vspace{0in}
  \hspace{-0.35in}
  \subfigure{\includegraphics[width=3in]{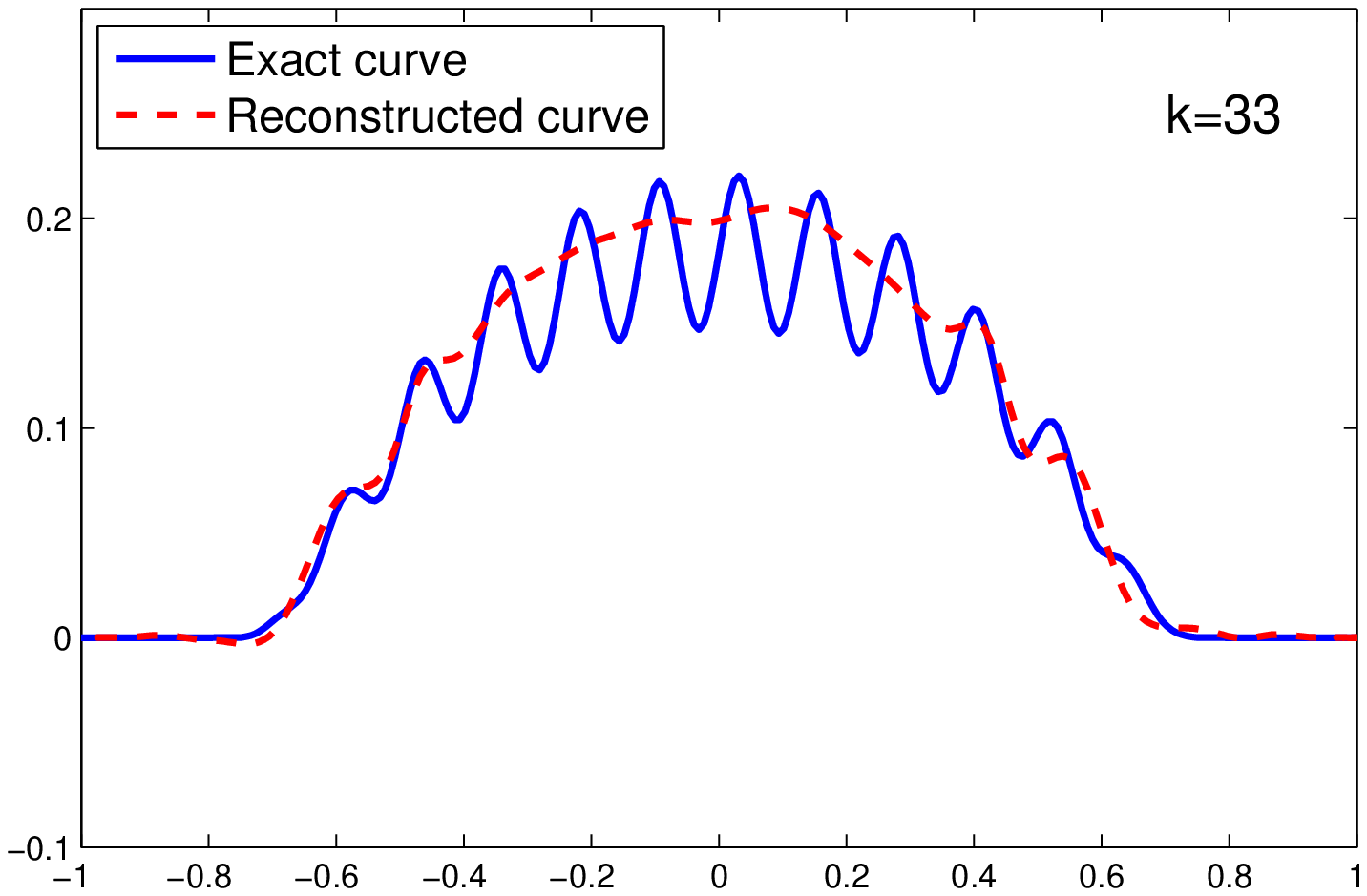}}
  \vspace{0in}
  \hspace{-0.35in}
  \subfigure{\includegraphics[width=3in]{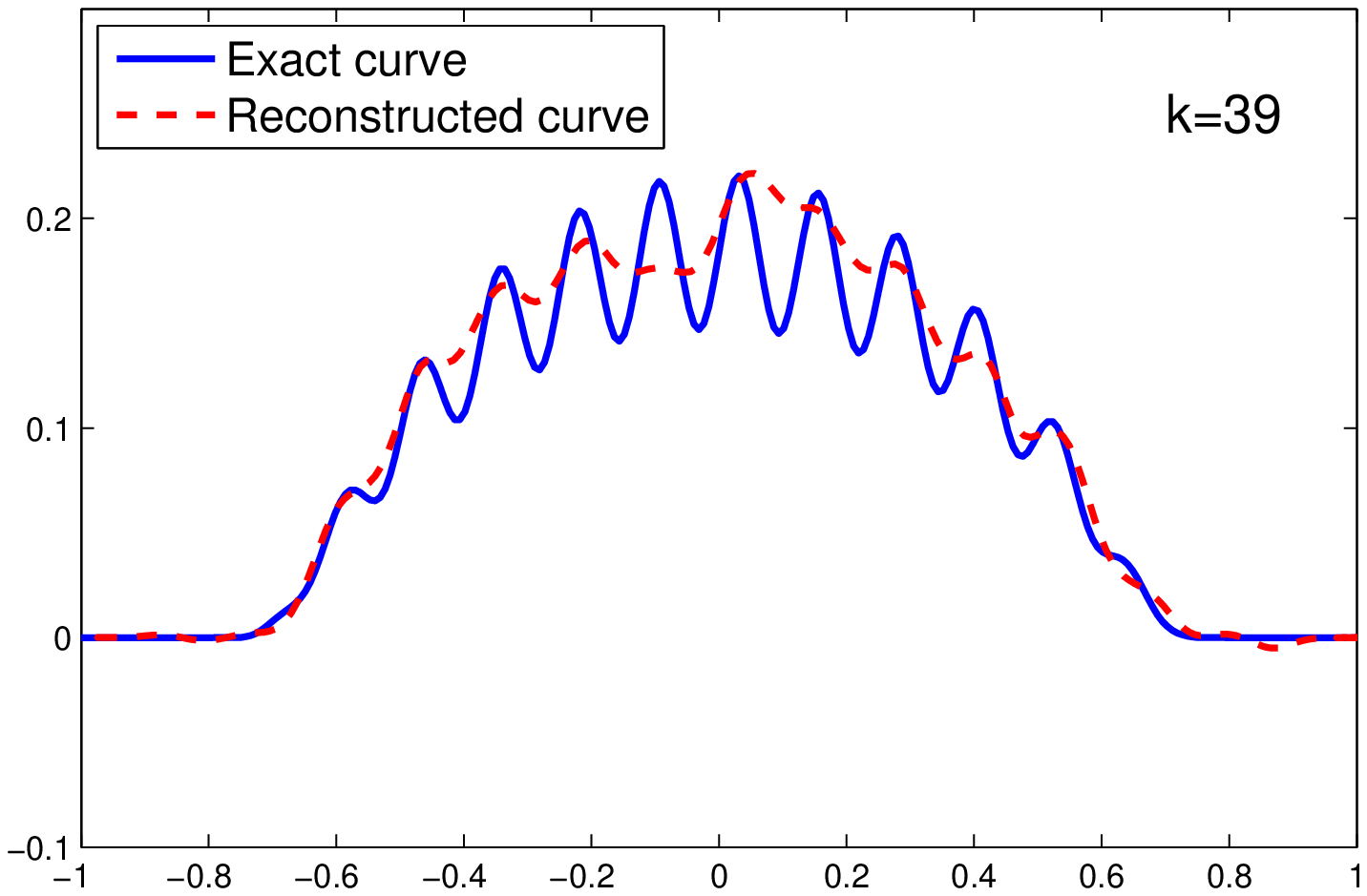}}
  \vspace{0in}
  \hspace{-0.35in}
  \subfigure{\includegraphics[width=3in]{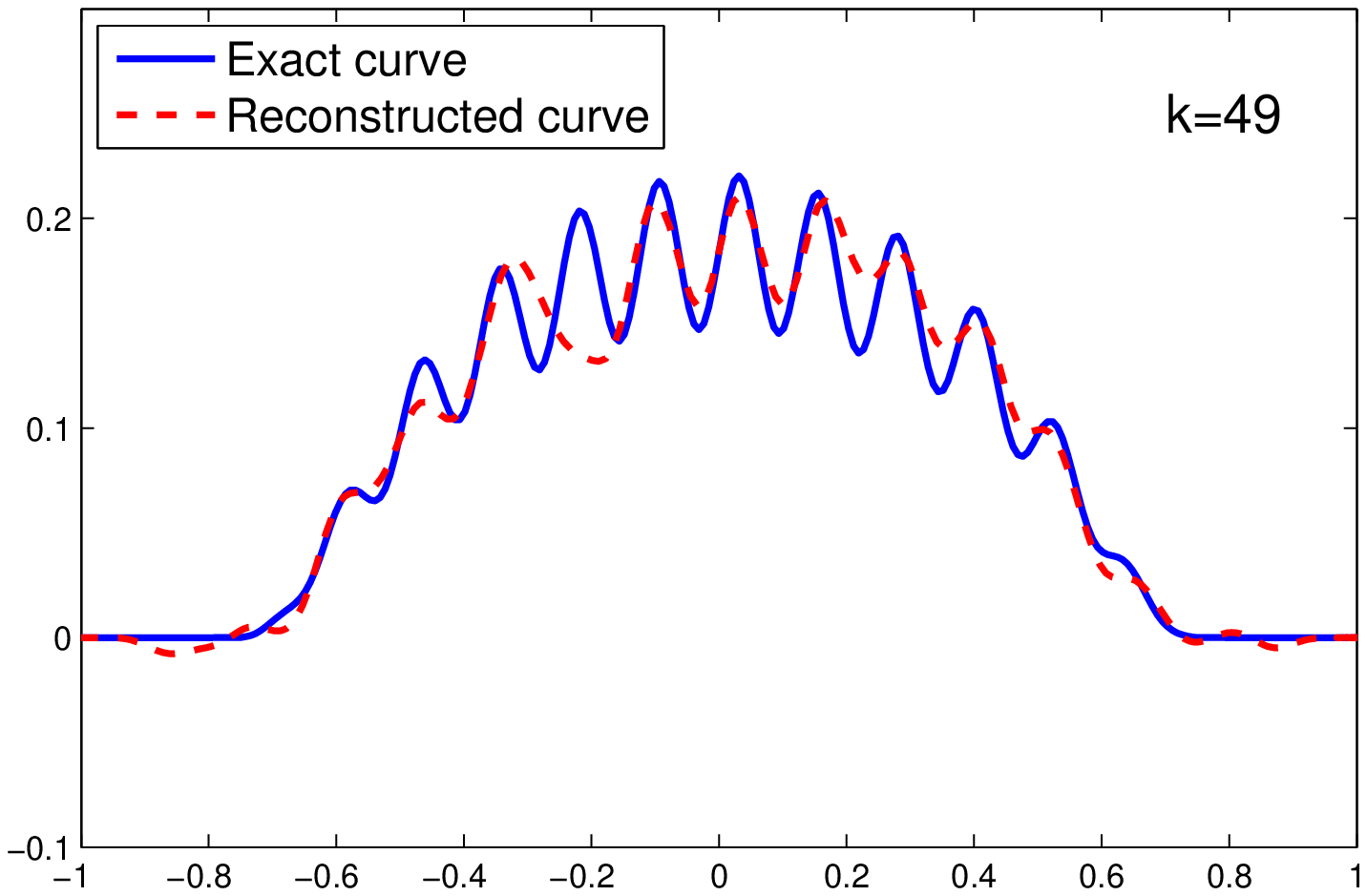}}
  \vspace{0in}
  \hspace{-0.35in}
  \subfigure{\includegraphics[width=3in]{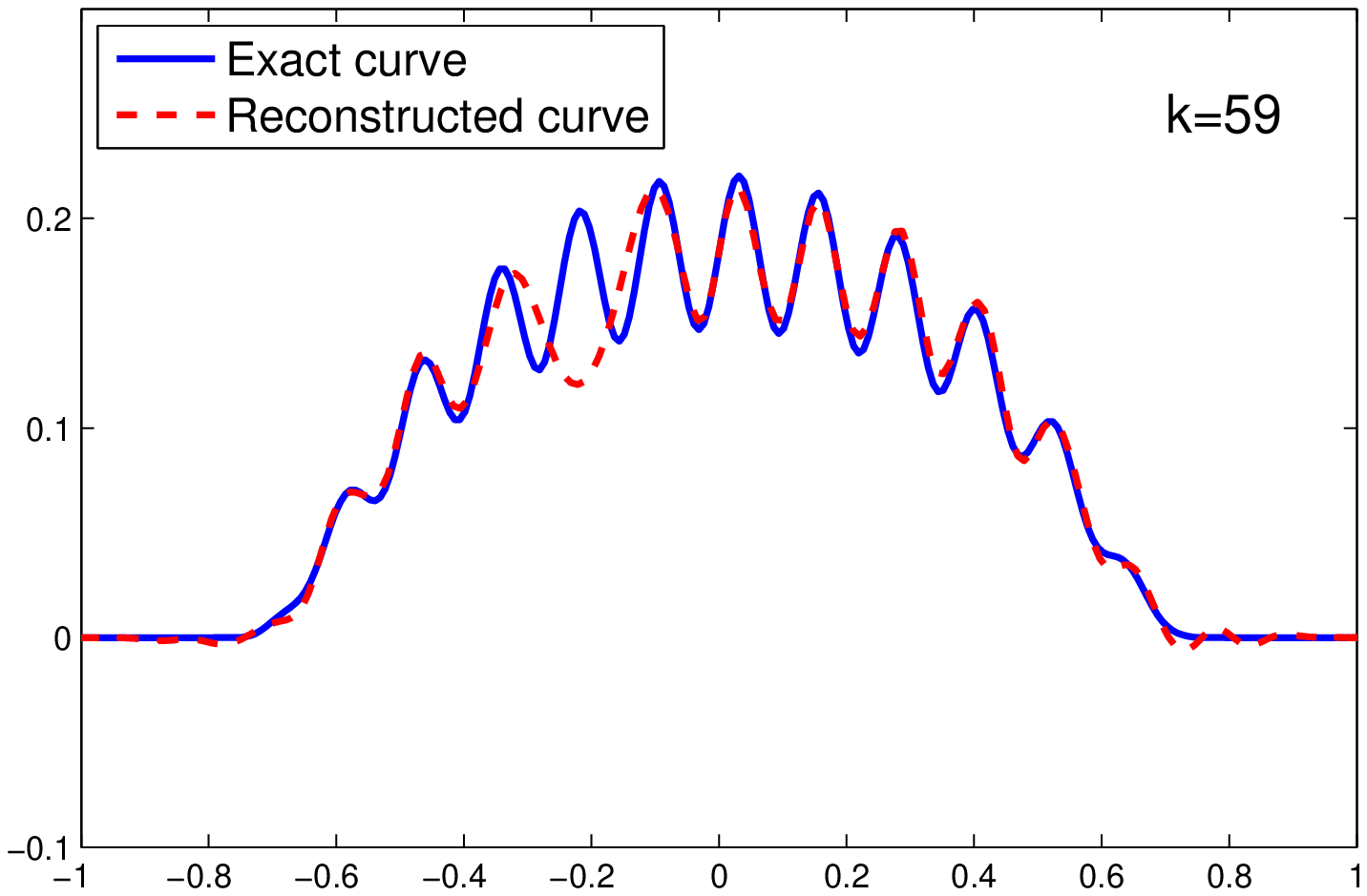}}
\caption{Reconstruction of a two-scale surface profile from $5\%$ noisy intensity far-field data
with a superposition of two plane waves as the incident field $u^i=u^i(x;d_1,d_2,k)$,
where $d_1=(\sin(-\pi/6),-\cos(-\pi/6))$ and $d_2=(\sin(\pi/6),-\cos(\pi/6))$.
Here, the initial and reconstructed curves are presented at the wave numbers $k=19,33,39,49,59$.
}\label{fig4-6}
\end{figure}

\begin{figure}[htbp]
  \centering
  \subfigure{\includegraphics[width=3in]{example/IP1/case6/initial_curve.eps}}
  \hspace{-0.35in}
  \vspace{0in}
  \subfigure{\includegraphics[width=3in]{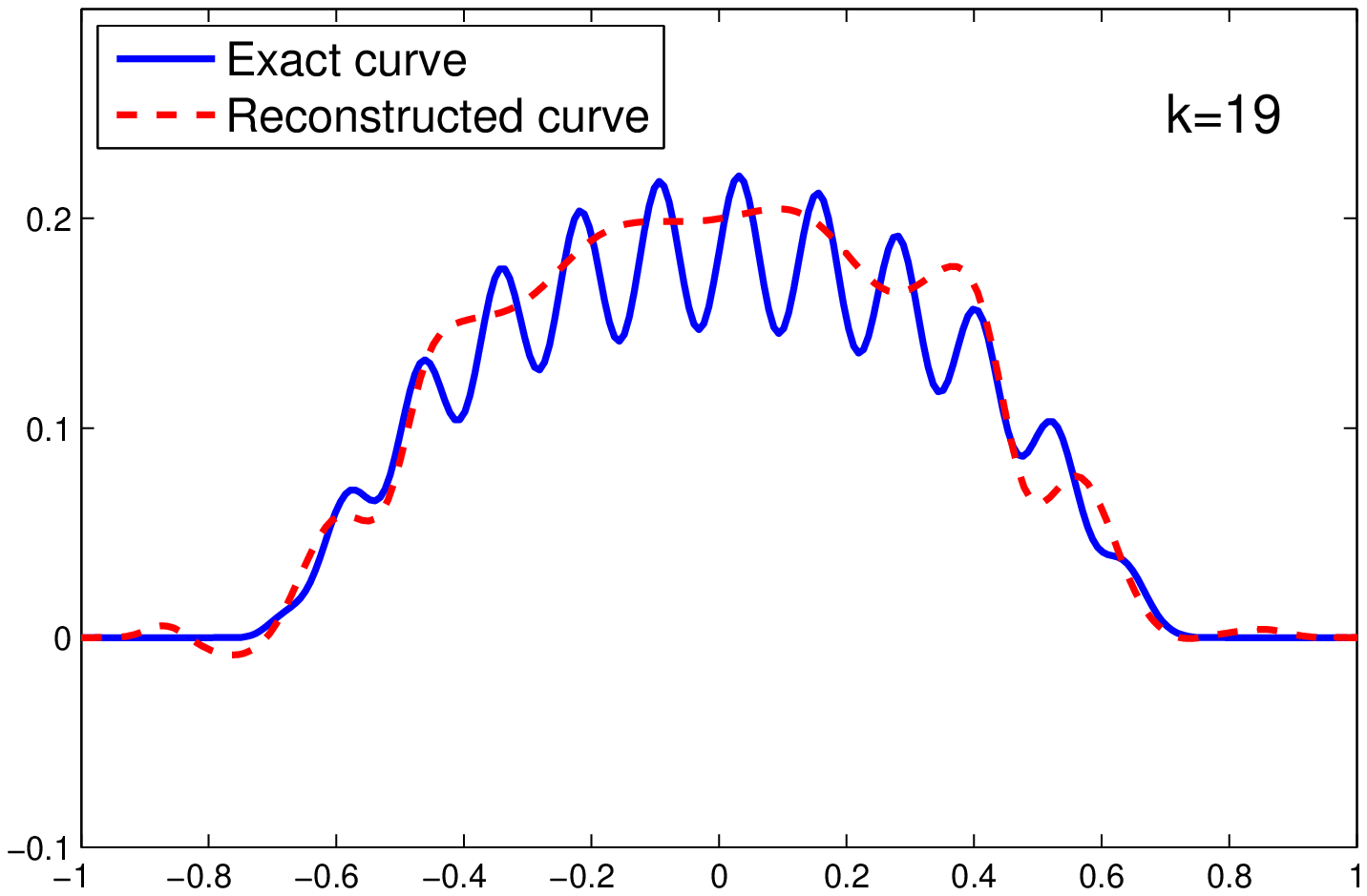}}
  \vspace{0in}
  \hspace{-0.35in}
  \subfigure{\includegraphics[width=3in]{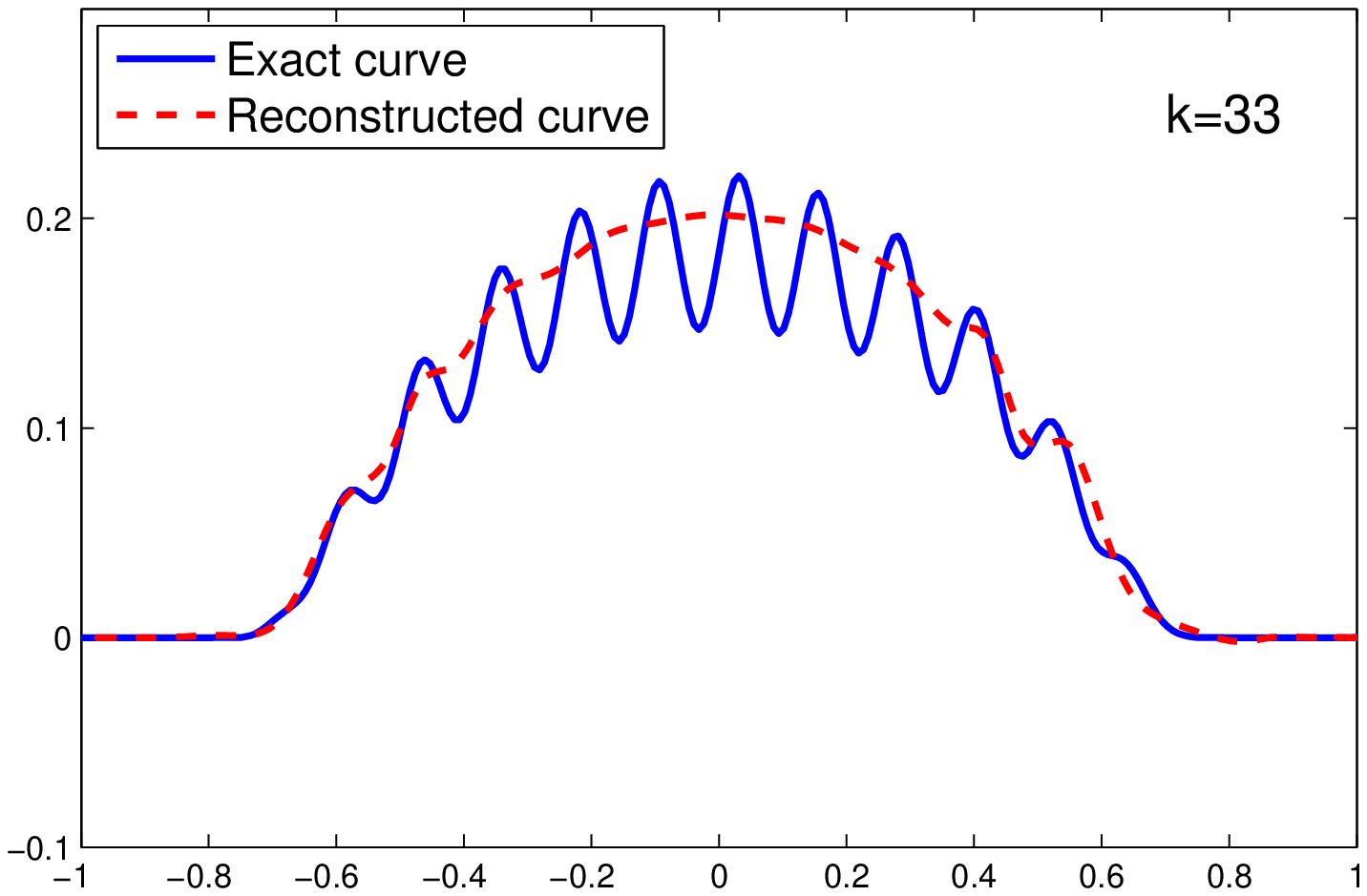}}
  \vspace{0in}
  \hspace{-0.35in}
  \subfigure{\includegraphics[width=3in]{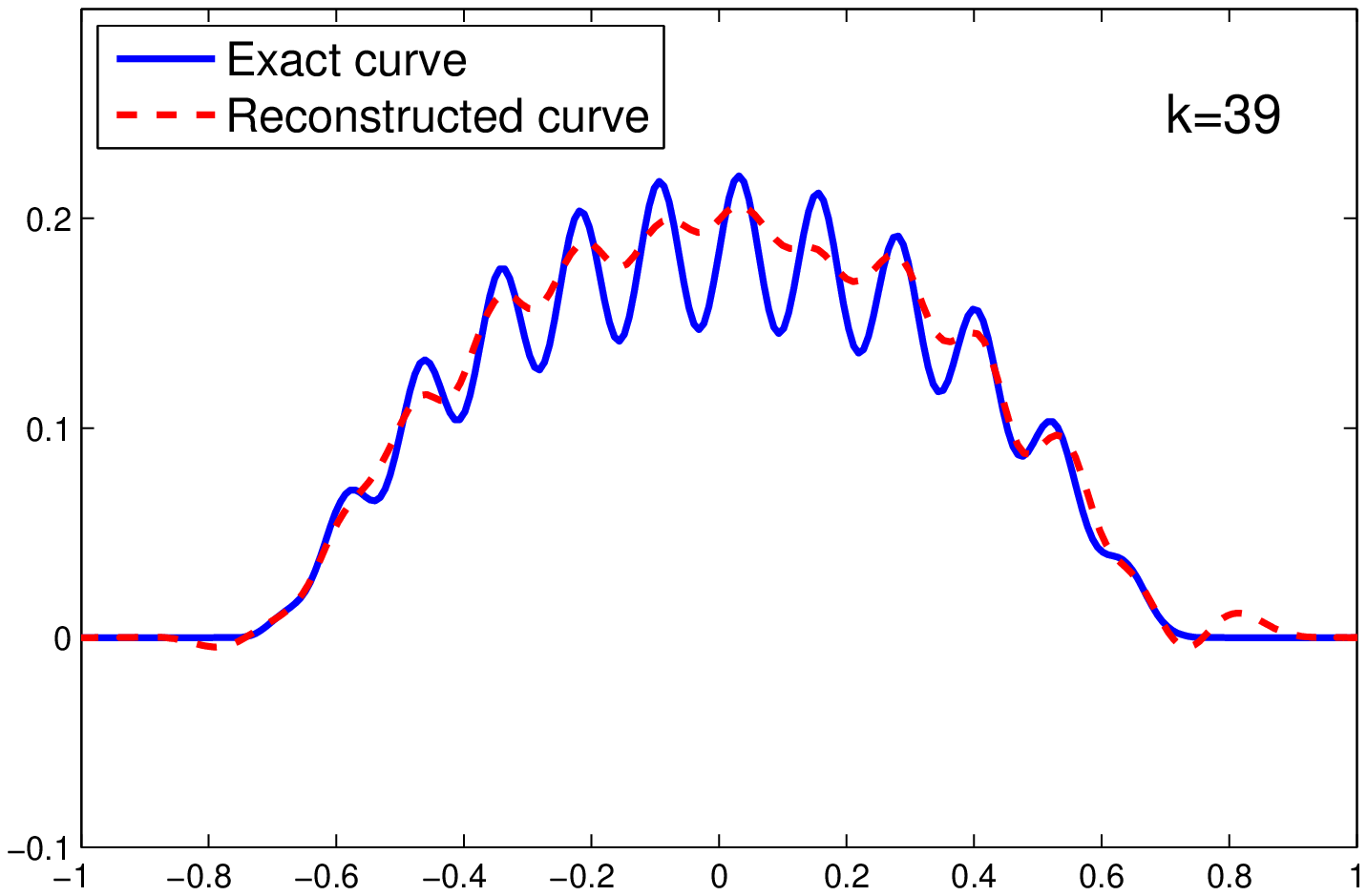}}
  \vspace{0in}
  \hspace{-0.35in}
  \subfigure{\includegraphics[width=3in]{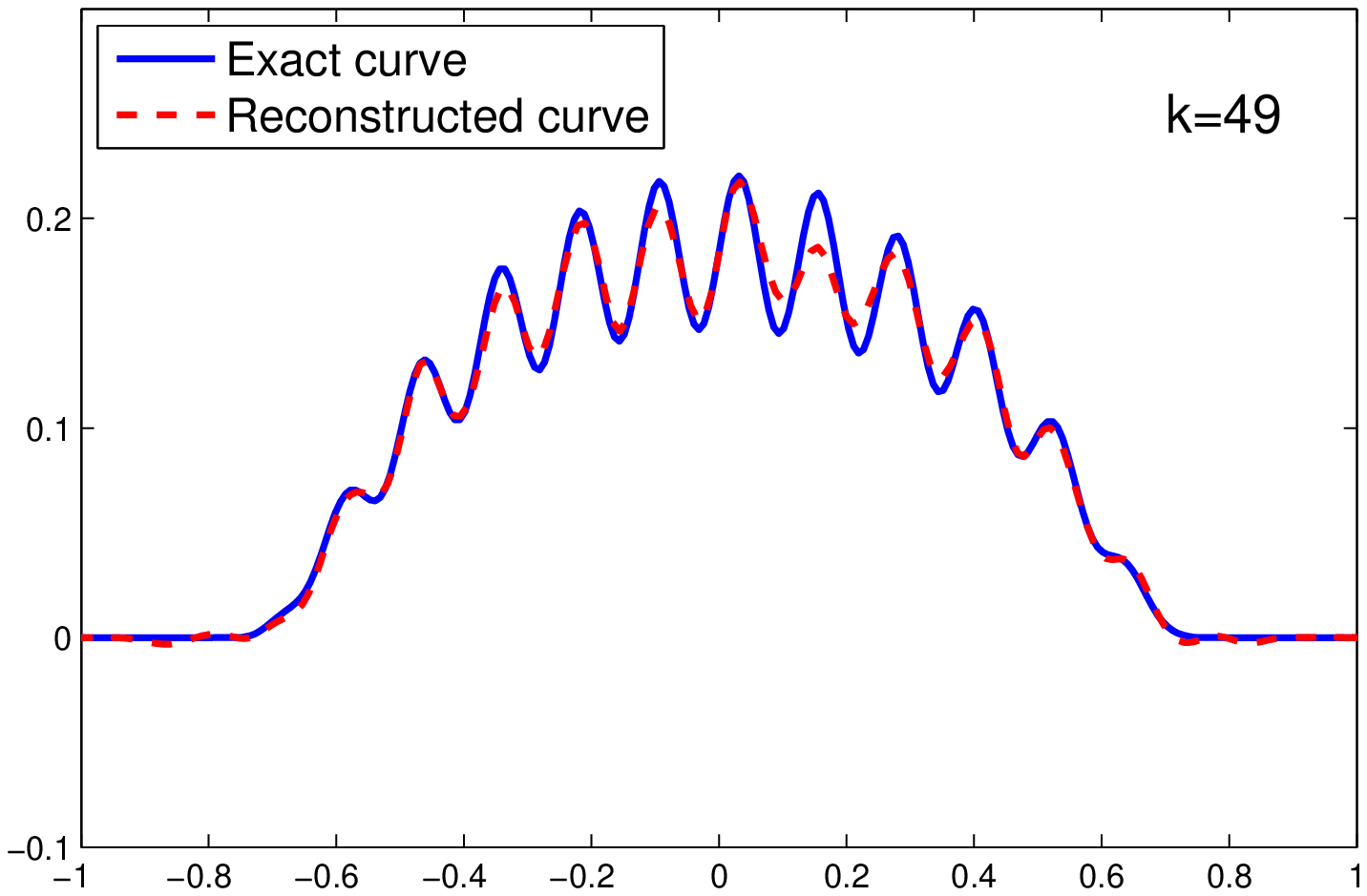}}
  \vspace{0in}
  \hspace{-0.35in}
  \subfigure{\includegraphics[width=3in]{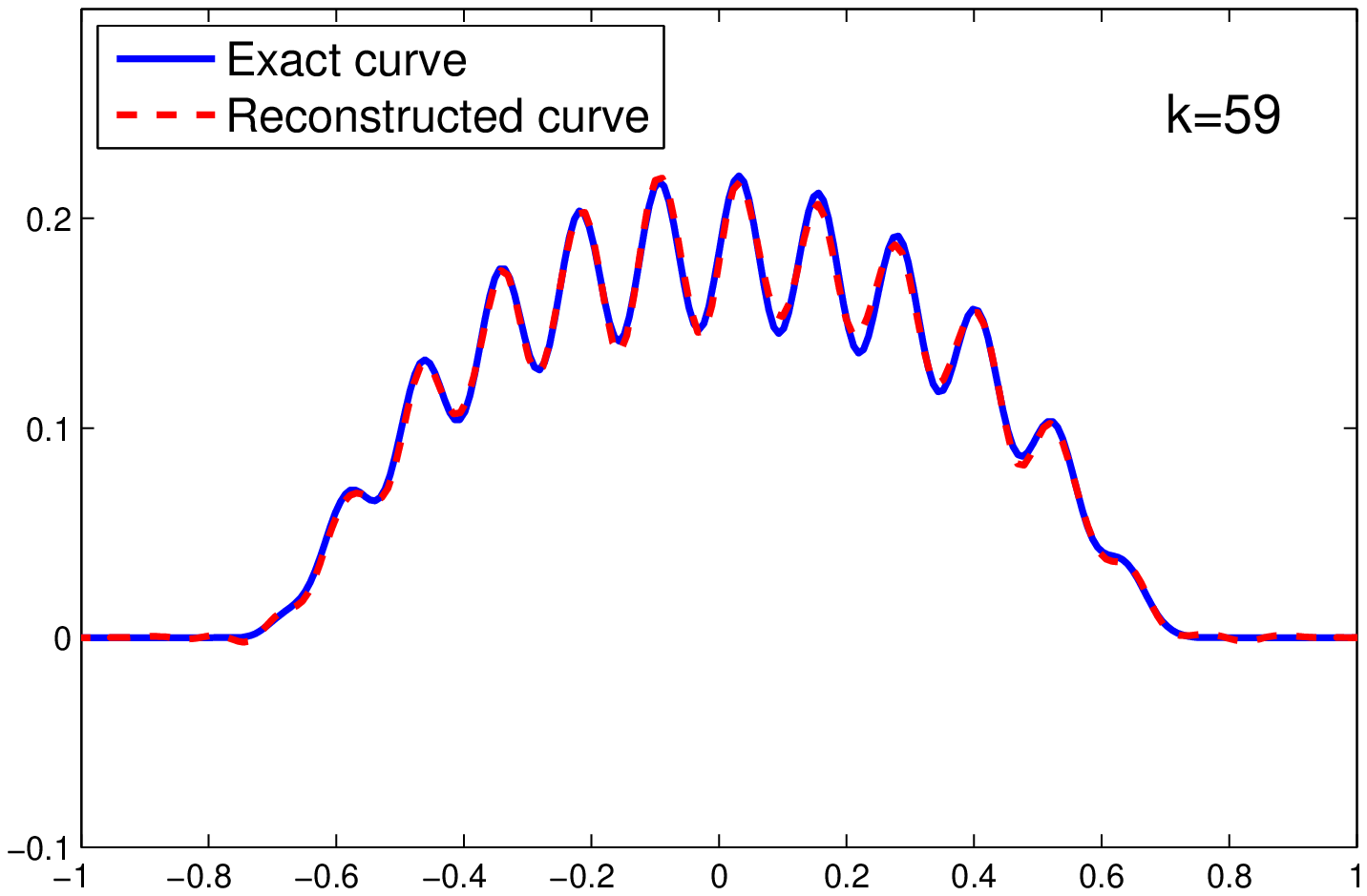}}
\caption{Reconstruction of a two-scale surface profile from $5\%$ noisy intensity far-field data
generated with two superpositions of two plane waves as the incident fields $u^i=u^i(x;d_{1l},d_{2l},k),l=1,2$,
where $d_{11}=(\sin(-\pi/6),-\cos(-\pi/6)),d_{21}=(0,-1),d_{12}=(\sin(\pi/6),-\cos(\pi/6))$ and $d_{22}=(0,-1)$.
Here, the initial and reconstructed curves are presented at the wave numbers $k=19,33,39,49,59$.
}\label{fig4-3}
\end{figure}

\textbf{Example 5 (multi-scale curve).} We consider the inverse problem (IP1) again with the multi-scale surface profile given by
\ben
h_\G(x_1)=\left\{\begin{array}{ll}
\ds \exp\left[16/(25x_1^2-16)\right]\left[0.5+0.1\sin(16\pi x_1)\right]\sin(\pi x_1), &|x_1|<4/5\\
\ds 0, &|x_1|\geq4/5
\end{array}\right.
\enn
This profile consists of a macro-scale represented by $0.5\exp\left[16/(25x_1^2-16)\right]\sin(\pi x_1)$
and a micro-scale represented by $0.1\exp\left[16/(25x_1^2-16)\right]\sin(16\pi x_1)\sin(\pi x_1)$.
For the inverse problem, the number of the spline basis functions is chosen to be $M=40$, the total number of frequencies
used is $N=35$, and the initial guess for $h_\G$ is $0.05\sum_{i=25}^{35}\phi_{i,M}$.
Figure \ref{c5fig4-10} presents the initial and reconstructed curves at $k=19,39,49,59,69$, respectively,
obtained by using $5\%$ noisy intensity far-field data generated with the incident wave $u^i=u^i(x;d_1,d_2,k)$,
where $d_1=(\sin(-\pi/6),-\cos(-\pi/6))$ and $d_2=(\sin(\pi/6),-\cos(\pi/6))$.
From Figure \ref{c5fig4-10} it is observed that the macro-scale features are captured at $k=19$
and the reconstruction improves as the measurement data with higher frequencies are used.
Finally, the whole locally rough surface (including the micro-scale features of the boundary)
is accurately recovered at $k=69$.

\begin{figure}[htbp]
  \centering
  \subfigure{\includegraphics[width=2.8in]{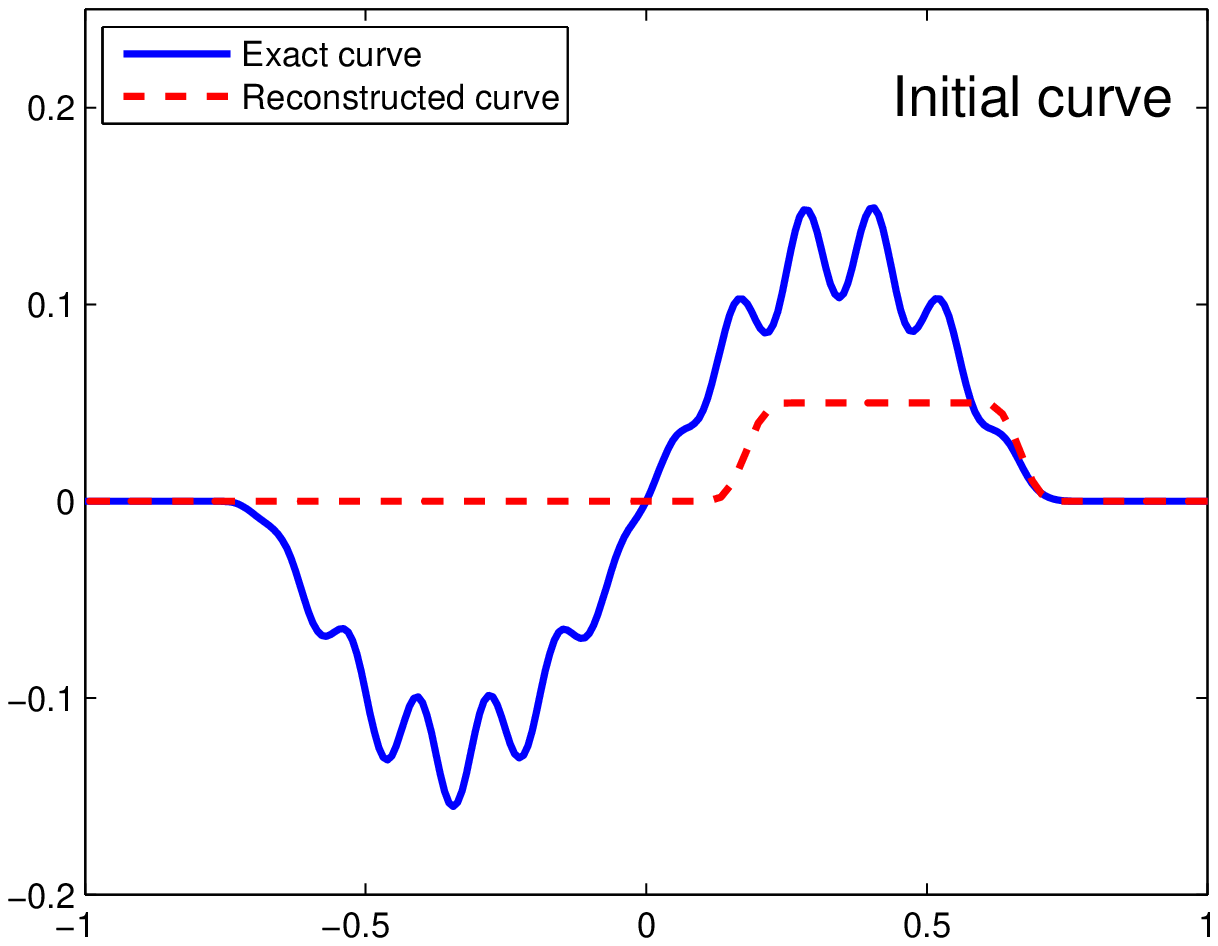}}
  \hspace{-0.35in}
  \vspace{0in}
  \subfigure{\includegraphics[width=2.8in]{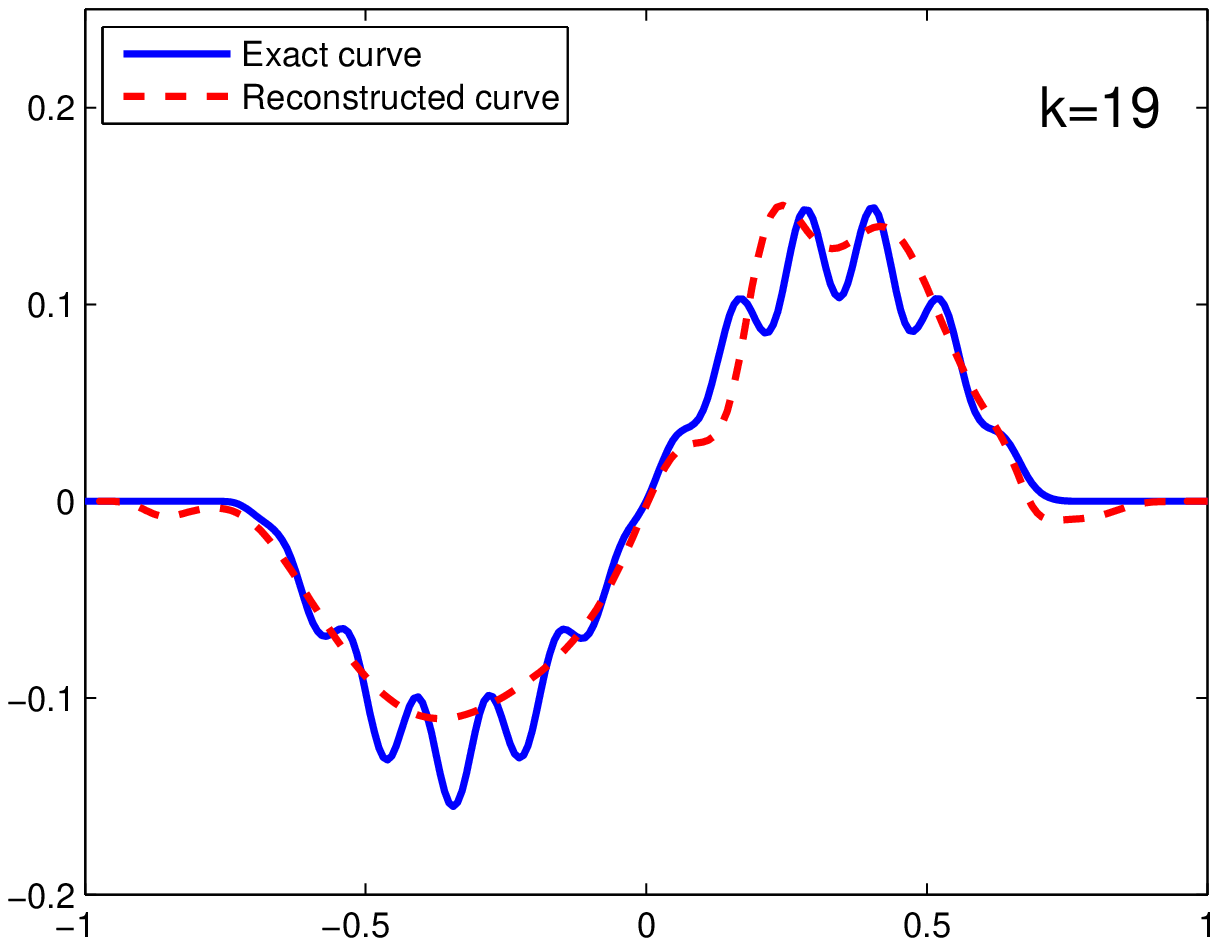}}
  \vspace{0in}
  \hspace{-0.35in}
    \subfigure{\includegraphics[width=2.8in]{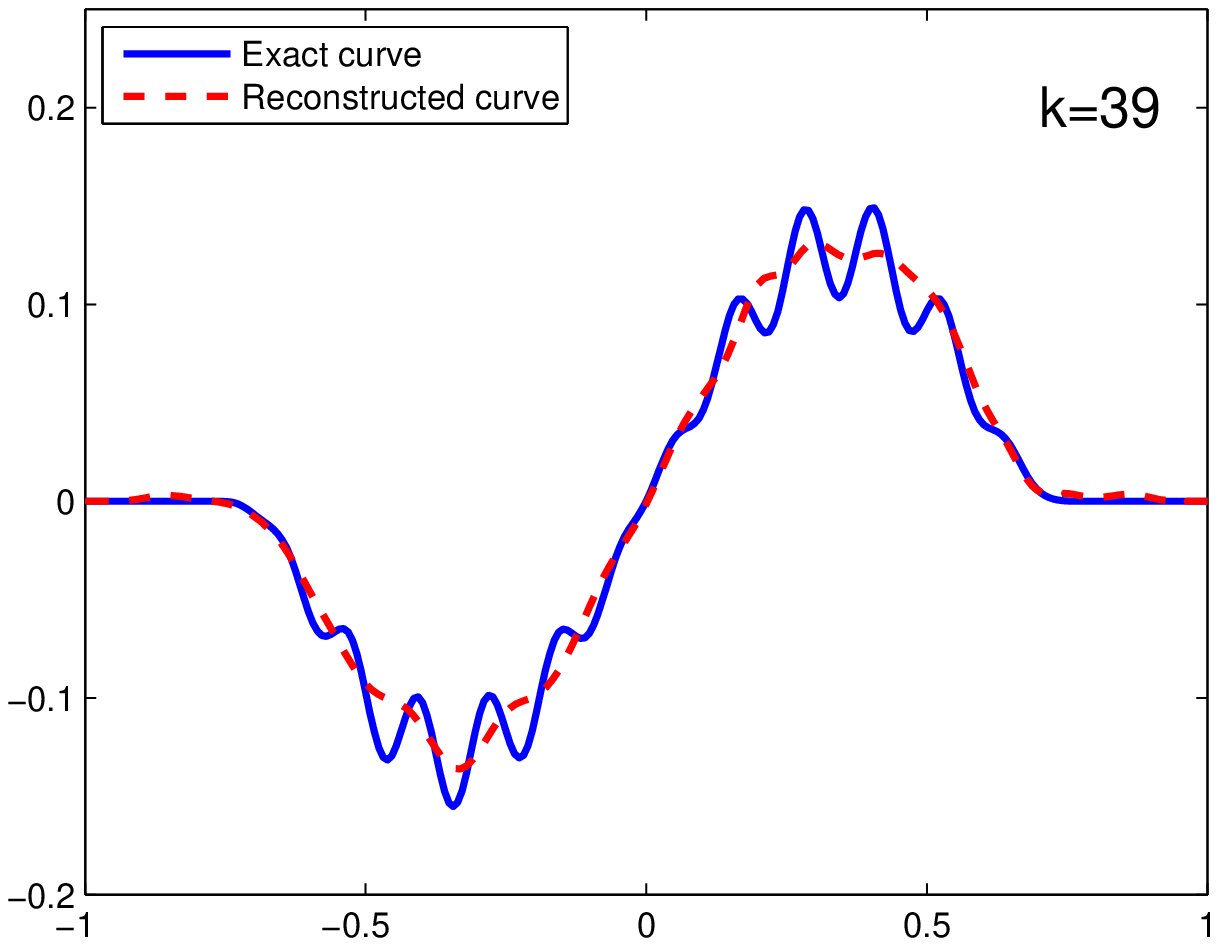}}
  \hspace{-0.35in}
  \vspace{0in}
  \subfigure{\includegraphics[width=2.8in]{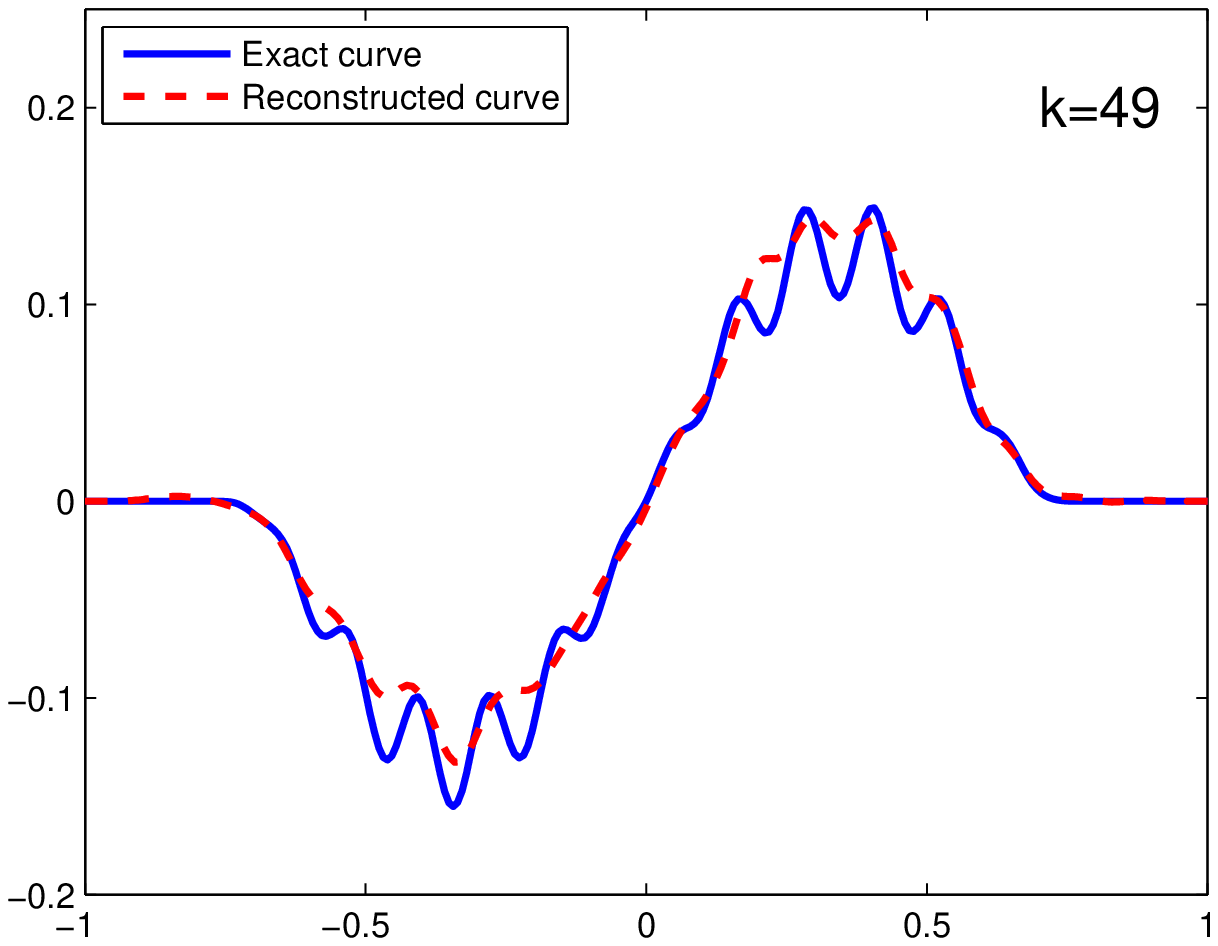}}
  \vspace{0in}
  \hspace{-0.35in}
  \subfigure{\includegraphics[width=2.8in]{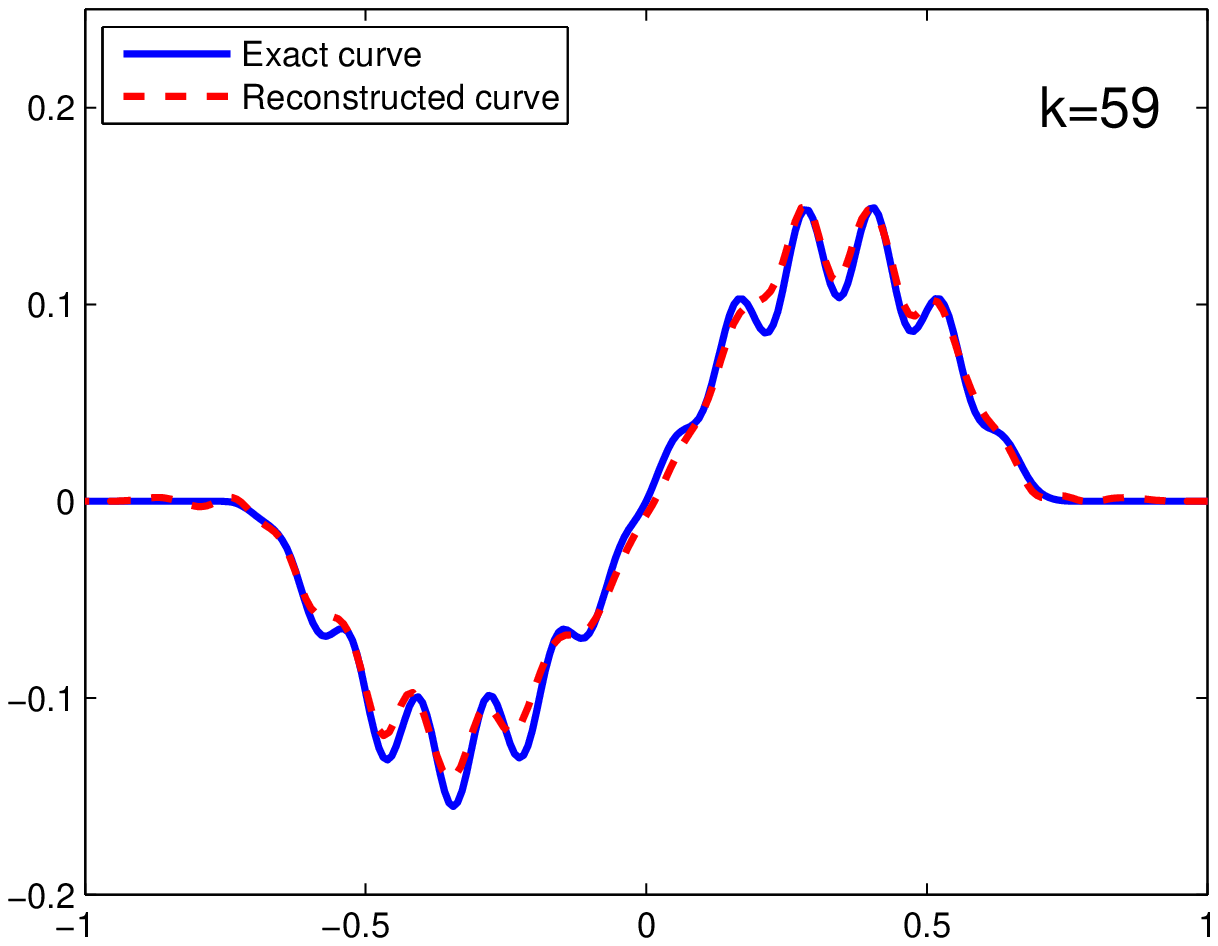}}
  \vspace{0in}
  \hspace{-0.35in}
  \subfigure{\includegraphics[width=2.8in]{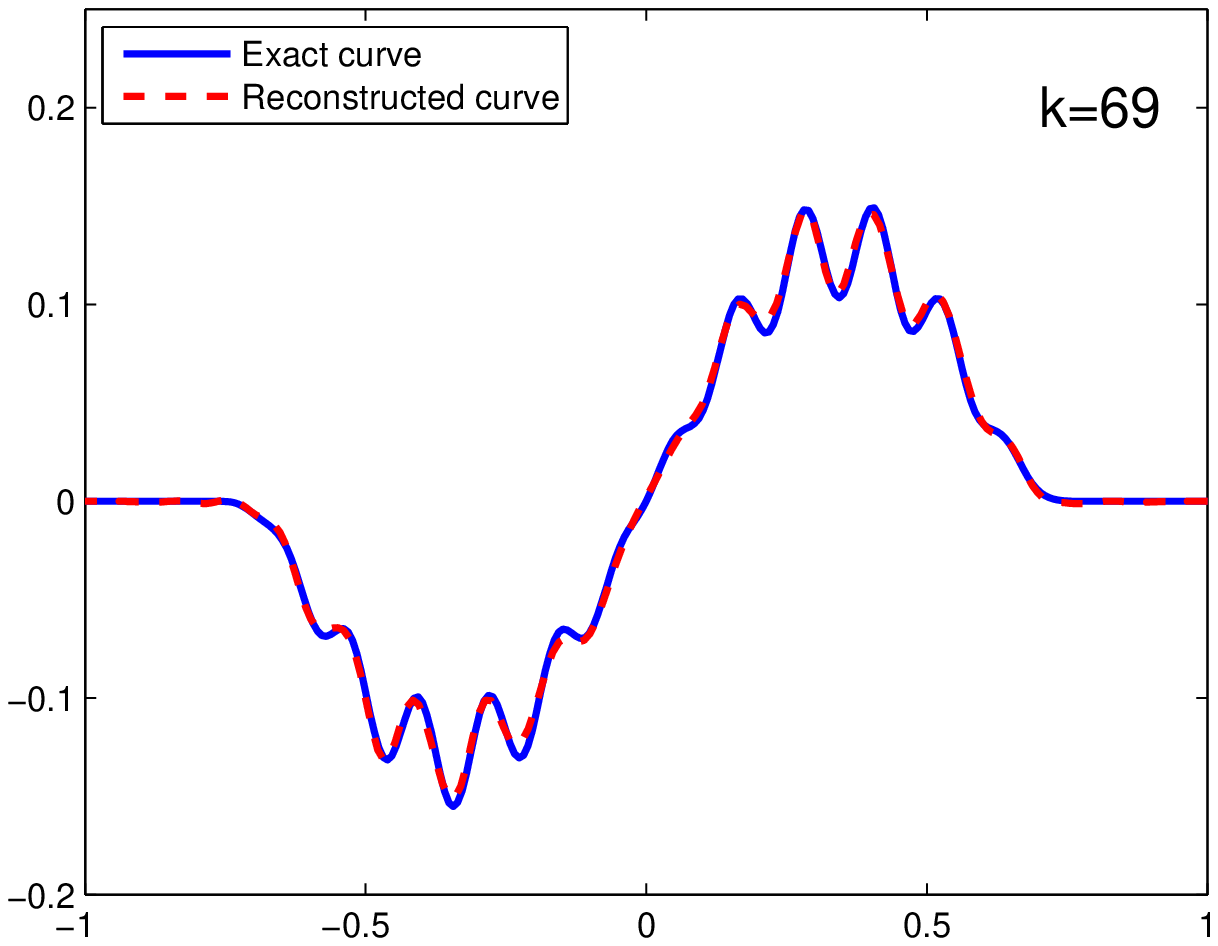}}
  \vspace{0in}
\caption{Reconstruction of a two-scale surface profile from $5\%$ noisy intensity far-field data
generated with a superposition of two plane waves as the incident field $u^i=u^i(x;d_1,d_2,k)$,
where $d_1=(\sin(-\pi/6),-\cos(-\pi/6))$ and $d_2=(\sin(\pi/6),-\cos(\pi/6))$.
Here, the initial and reconstructed curves are presented at the wave numbers $k=19,39,49,59,69$.
}\label{c5fig4-10}
\end{figure}

\textbf{Example 6 (piecewise linear curve).} We now consider the inverse problem (IP2) with the locally rough surface
given as in Example 3. For the inverse problem, the number of the spline basis functions is chosen to be $M=40$,
the total number of frequencies is assumed to be $N=18$, and the initial guess for the reconstructed curve is taken as
the infinite plane $x_2=0$. In Figure \ref{fig4-7}, we present the initial curve and the reconstructed curves at $k=7,15,31,$
respectively, which are obtained from the $5\%$ intensity near-field data generated with the incident wave $u^i=u^i(x;d,k)$,
where $d=(\sin(-\pi/6),-\cos(-\pi/6))$. It can be seen that the reconstruction at the illuminated part of the boundary is
better than that at the shadowed part. Thus, in order to improve the reconstruction, more measurement data are needed.
Figure \ref{fig4-4} presents the initial curve and the reconstructed curves at $k=7,15,35,$ respectively, obtained by using
the $5\%$ intensity near-field data corresponding to two incident waves $u^i=u^i(x;d_l,k),l=1,2$,
where $d_1=(\sin(-\pi/6),-\cos(-\pi/6))$ and $d_2=(\sin(\pi/6),-\cos(\pi/6))$.
It is found that the piecewise linear surface profile is accurately reconstructed even at the corners of the surface by
using two incident plane waves with different directions.

\begin{figure}[htbp]
  \centering
  \subfigure{\includegraphics[width=3in]{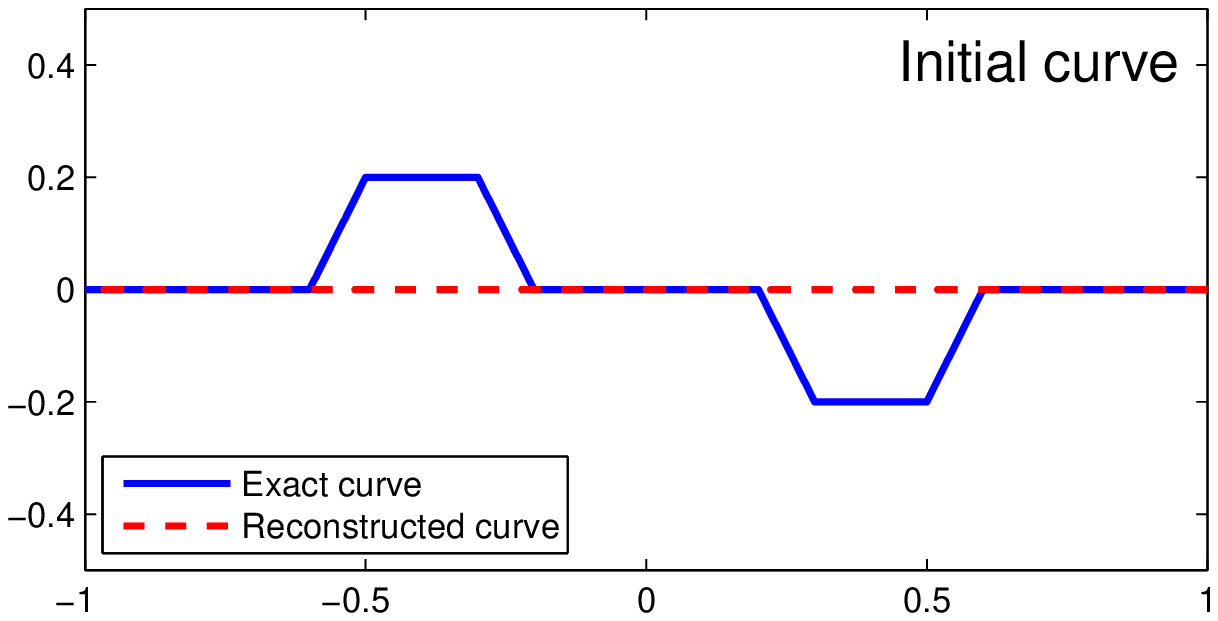}}
  \hspace{-0.35in}
  \vspace{-0.5in}
  \subfigure{\includegraphics[width=3in]{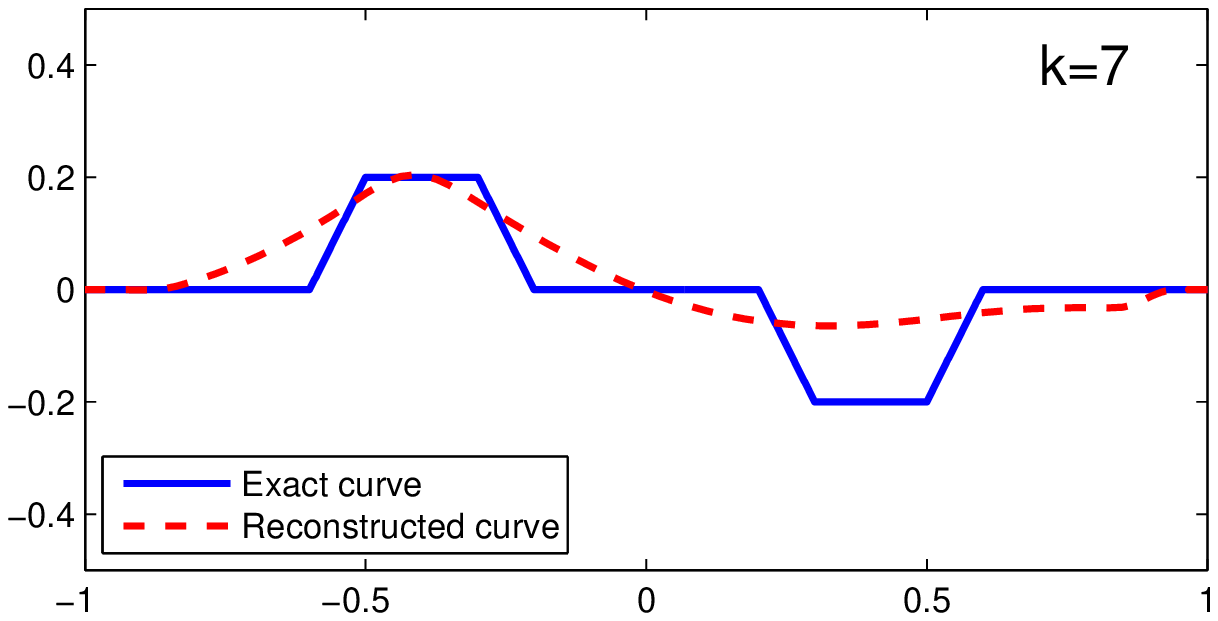}}
  \vspace{0in}
  \hspace{-0.35in}
  \subfigure{\includegraphics[width=3in]{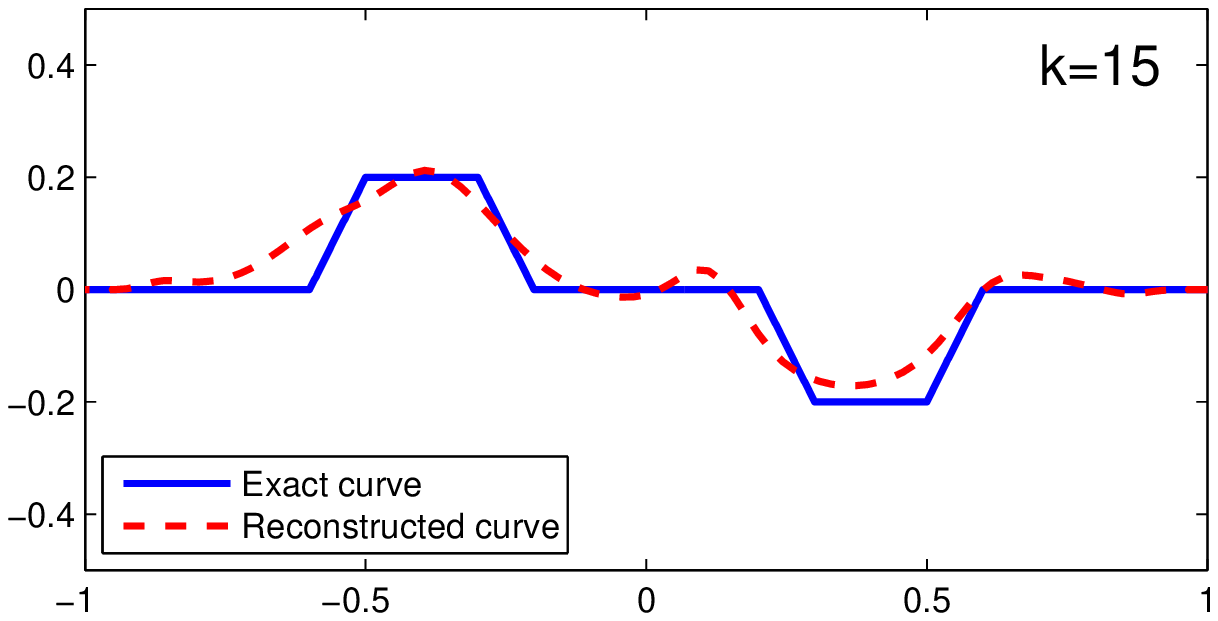}}
  \vspace{0in}
  \hspace{-0.35in}
  \subfigure{\includegraphics[width=3in]{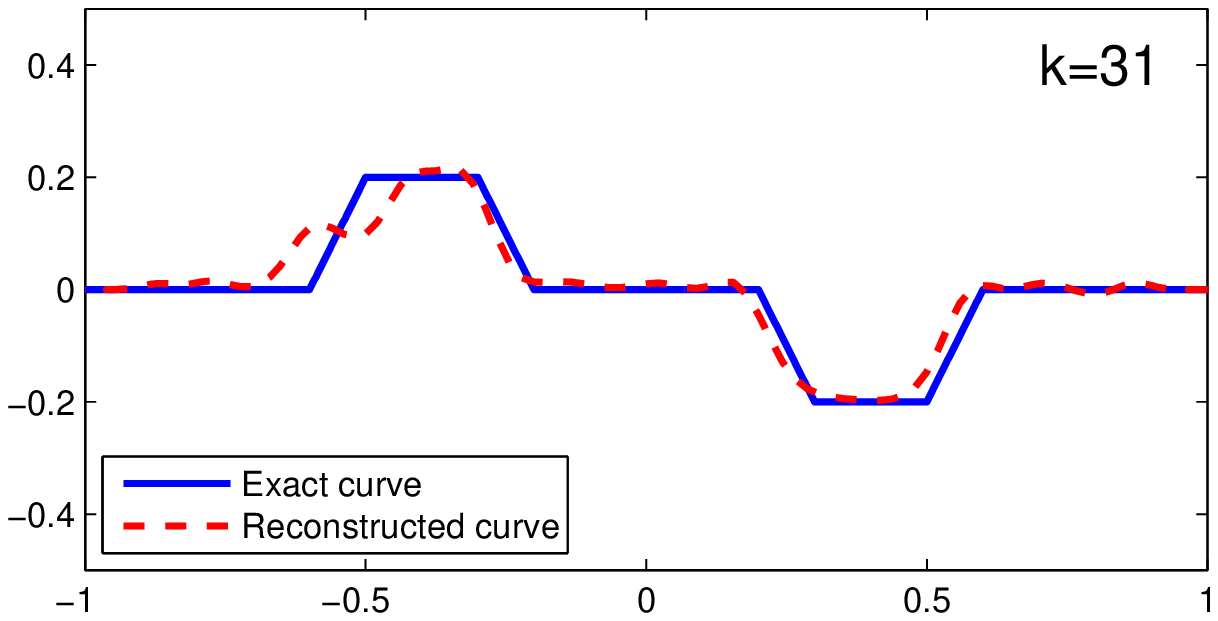}}
  \vspace{-0.5in}
\caption{Reconstruction of a piecewise linear surface profile from $5\%$ noisy intensity near-field data
generated with one incident plane wave $u^i=u^i(x;d,k)$, where $d=(\sin(-\pi/6),-\cos(-\pi/6))$.
Here, the initial and reconstructed curves are presented at the wavenumbers $k=7,15,31,$ respectively.
}\label{fig4-7}
\end{figure}

\begin{figure}[htbp]
  \centering
  \subfigure{\includegraphics[width=3in]{example/IP2/case9/initial_curve.eps}}
  \hspace{-0.35in}
  \vspace{-0.5in}
  \subfigure{\includegraphics[width=3in]{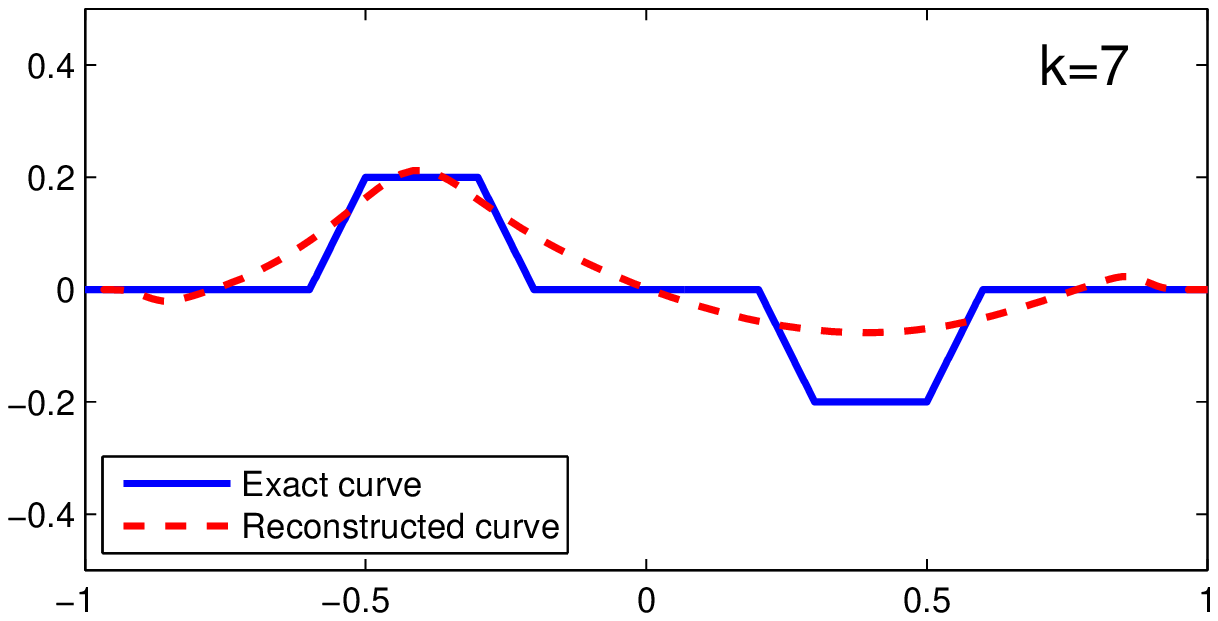}}
  \vspace{0in}
  \hspace{-0.35in}
  \subfigure{\includegraphics[width=3in]{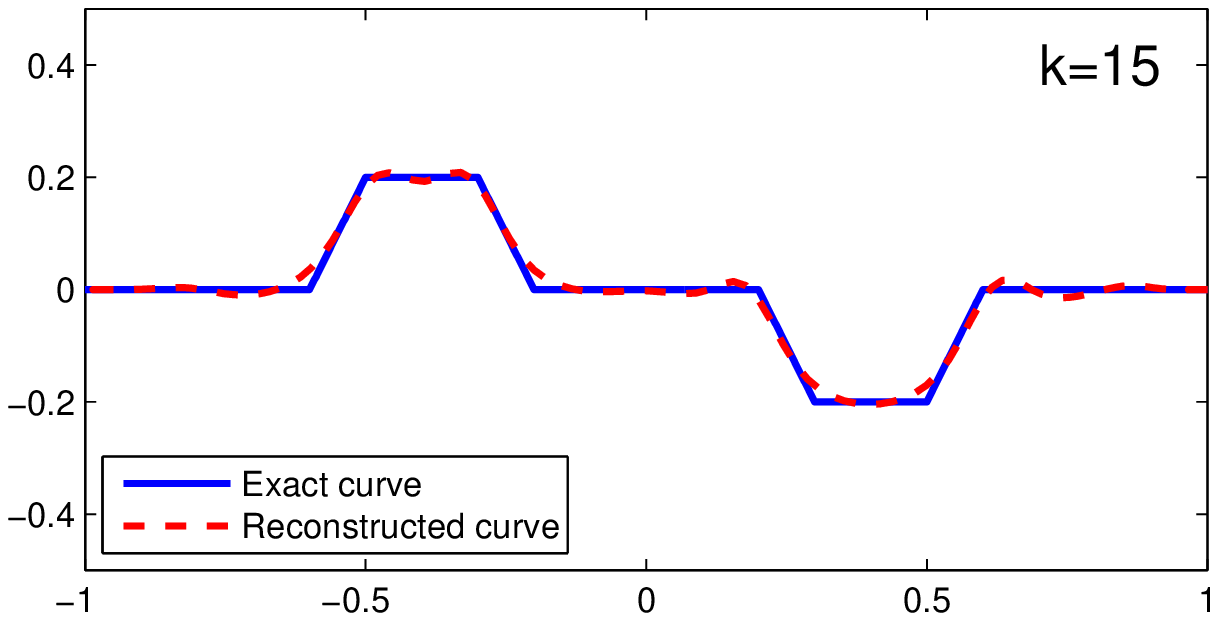}}
  \vspace{0in}
  \hspace{-0.35in}
  \subfigure{\includegraphics[width=3in]{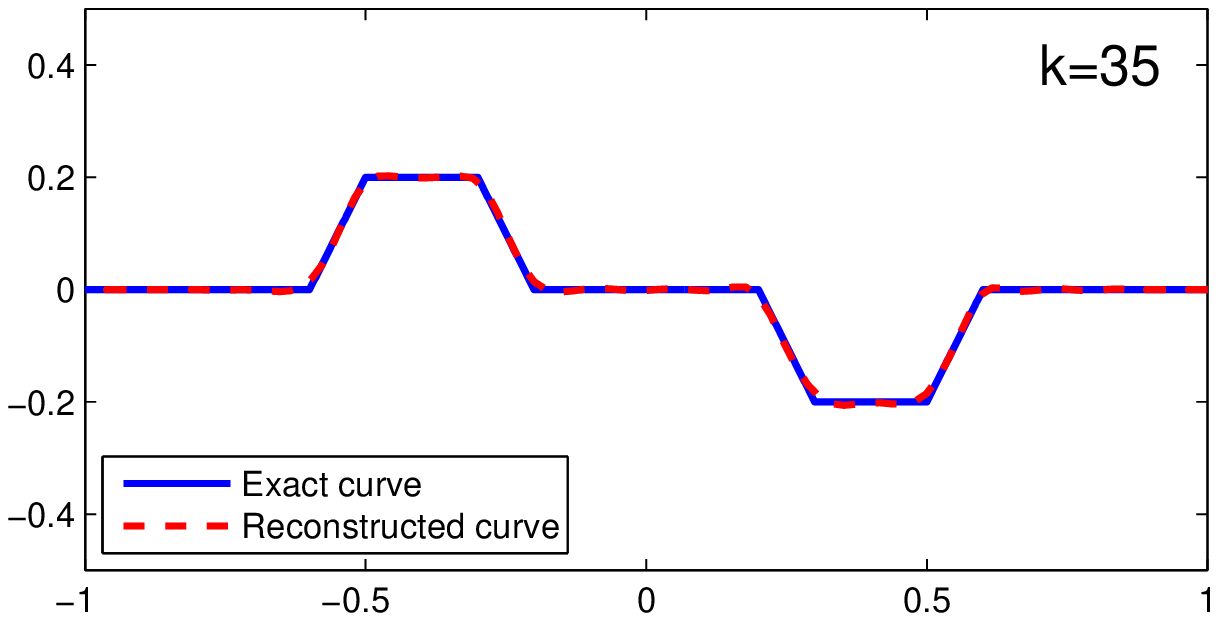}}
  \vspace{-0.5in}
\caption{Reconstruction of a piecewise linear surface profile from $5\%$ noisy intensity near-field data
generated with two incident plane waves $u^i=u^i(x;d_l,k),l=1,2$, where $d_1=(\sin(-\pi/6),-\cos(-\pi/6))$
and $d_2=(\sin(\pi/6),-\cos(\pi/6))$.
Here, the initial and reconstructed curves are presented at the wave numbers $k=7,15,35$.
}\label{fig4-4}
\end{figure}

\textbf{Example 7 (multi-scale curve).} We consider the inverse problem (IP2) again with the multi-scale surface profile
given as in Example 5. For the inverse problem, the number of the spline basis functions is chosen to be $M=40$,
the total number of frequencies is set to be $N=30$, and the initial guess for the reconstructed curve is taken as
the infinite plane $x_2=0$. In Figure \ref{fig4-8},
we present the initial curve and the reconstructed curves at $k=13,29,39,49,59$,
obtained from the $5\%$ noisy intensity near-field data generated with one incident plane wave $u^i=u^i(x;d,k)$,
where $d=(0,-1)$. It is observed that the micro-scale of the boundary surface is not recovered accurately.
Figure \ref{fig4-5} presents the initial curve and the reconstructed curves at $k=13,29,39,49,59,$ respectively,
obtained from the $5\%$ noisy intensity near-field data corresponding to two incident plane waves $u^i=u^i(x;d_l,k),l=1,2$,
with two different directions $d_1=(\sin(-\pi/6),-\cos(-\pi/6))$ and $d_2=(\sin(\pi/6),-\cos(\pi/6))$.
Compared with Figure \ref{fig4-8} it can be seen that the multi-scale surface profile can be accurately recovered
by using two incident plane waves with different directions.

\begin{figure}[htbp]
  \centering
  \subfigure{\includegraphics[width=3in]{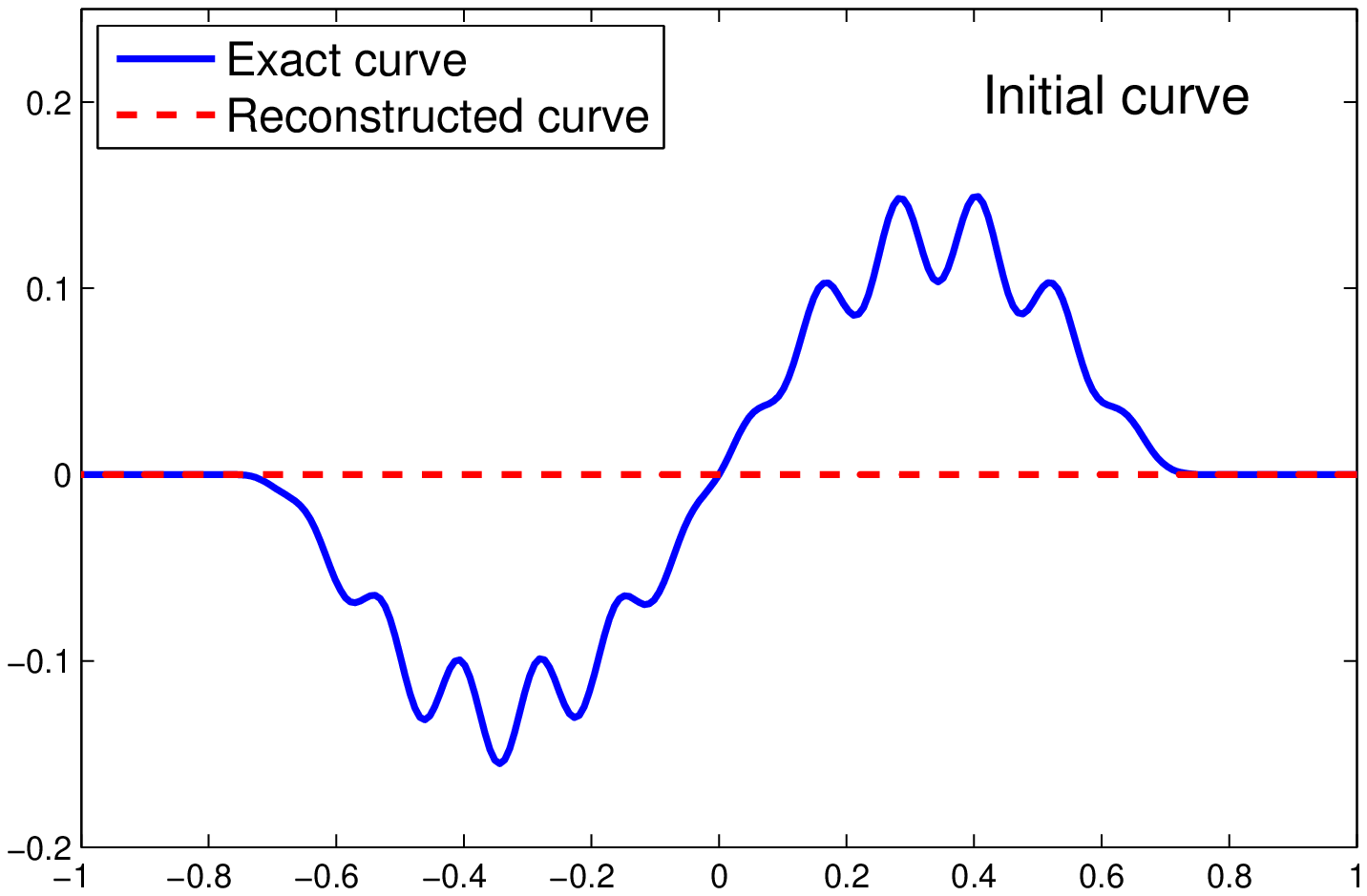}}
  \hspace{-0.35in}
  \vspace{0in}
  \subfigure{\includegraphics[width=3in]{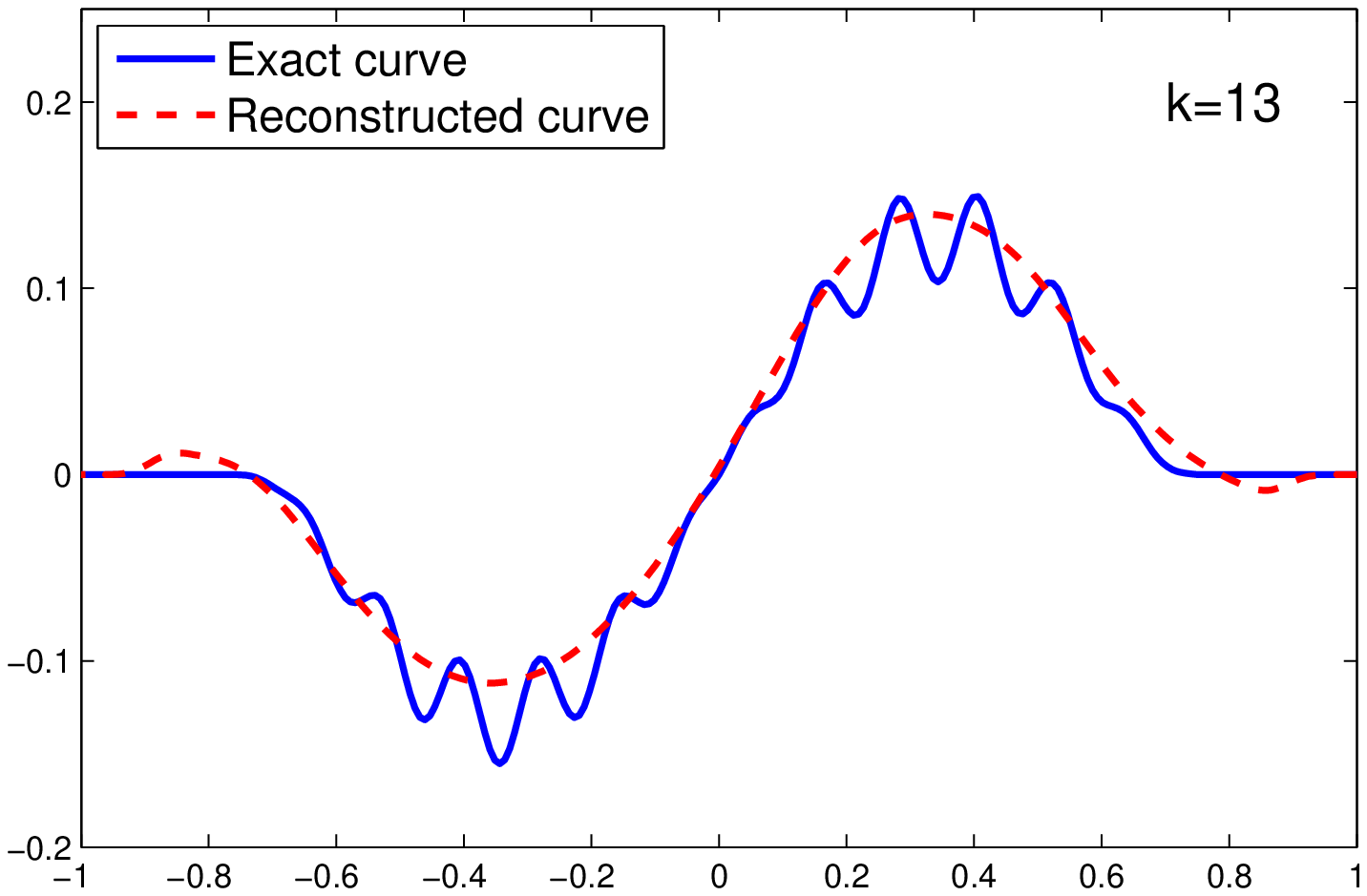}}
  \vspace{0in}
  \hspace{-0.35in}
  \subfigure{\includegraphics[width=3in]{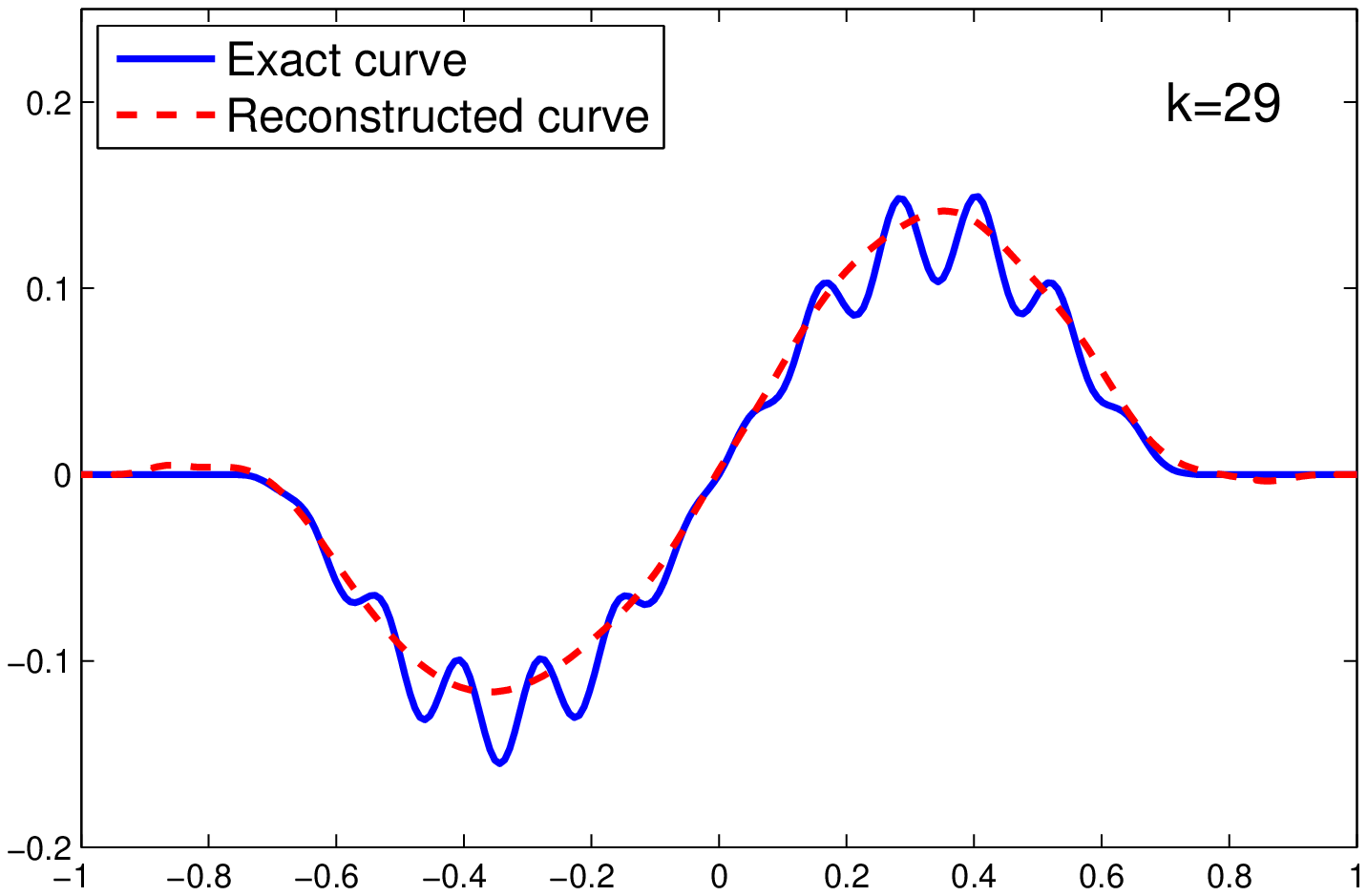}}
  \hspace{-0.35in}
  \vspace{0in}
  \subfigure{\includegraphics[width=3in]{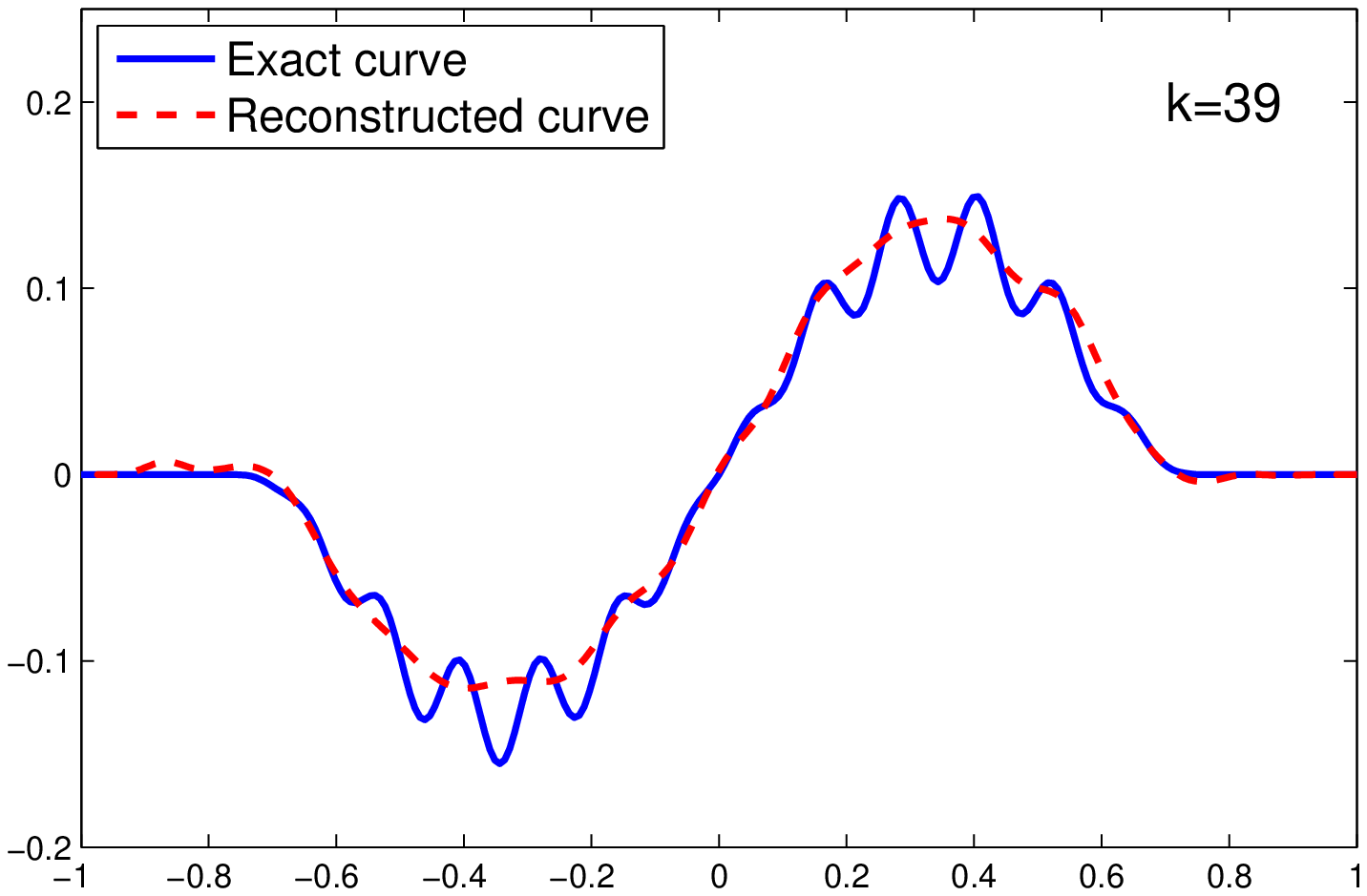}}
  \vspace{0in}
  \hspace{-0.35in}
  \subfigure{\includegraphics[width=3in]{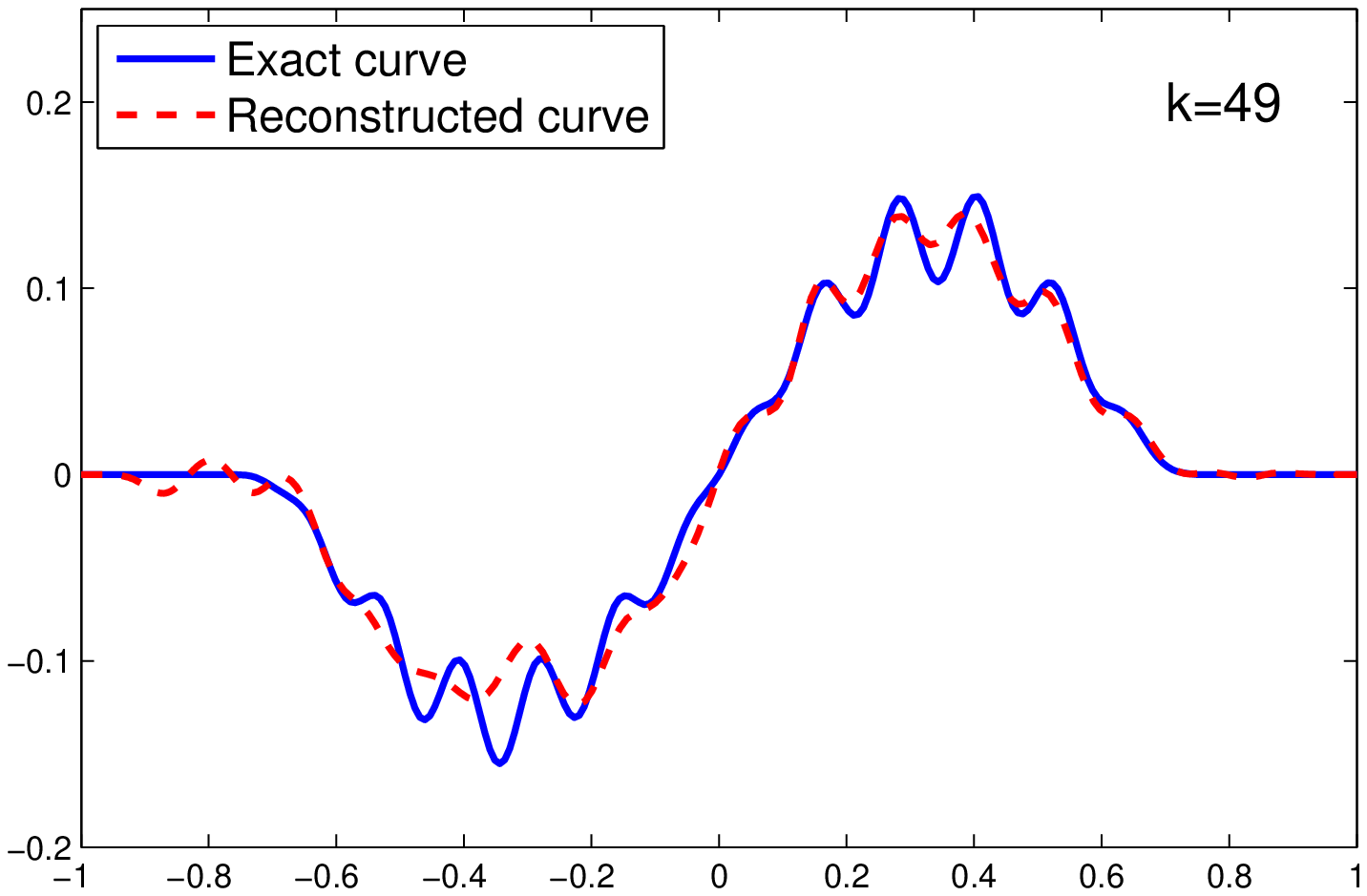}}
  \vspace{0in}
  \hspace{-0.35in}
  \subfigure{\includegraphics[width=3in]{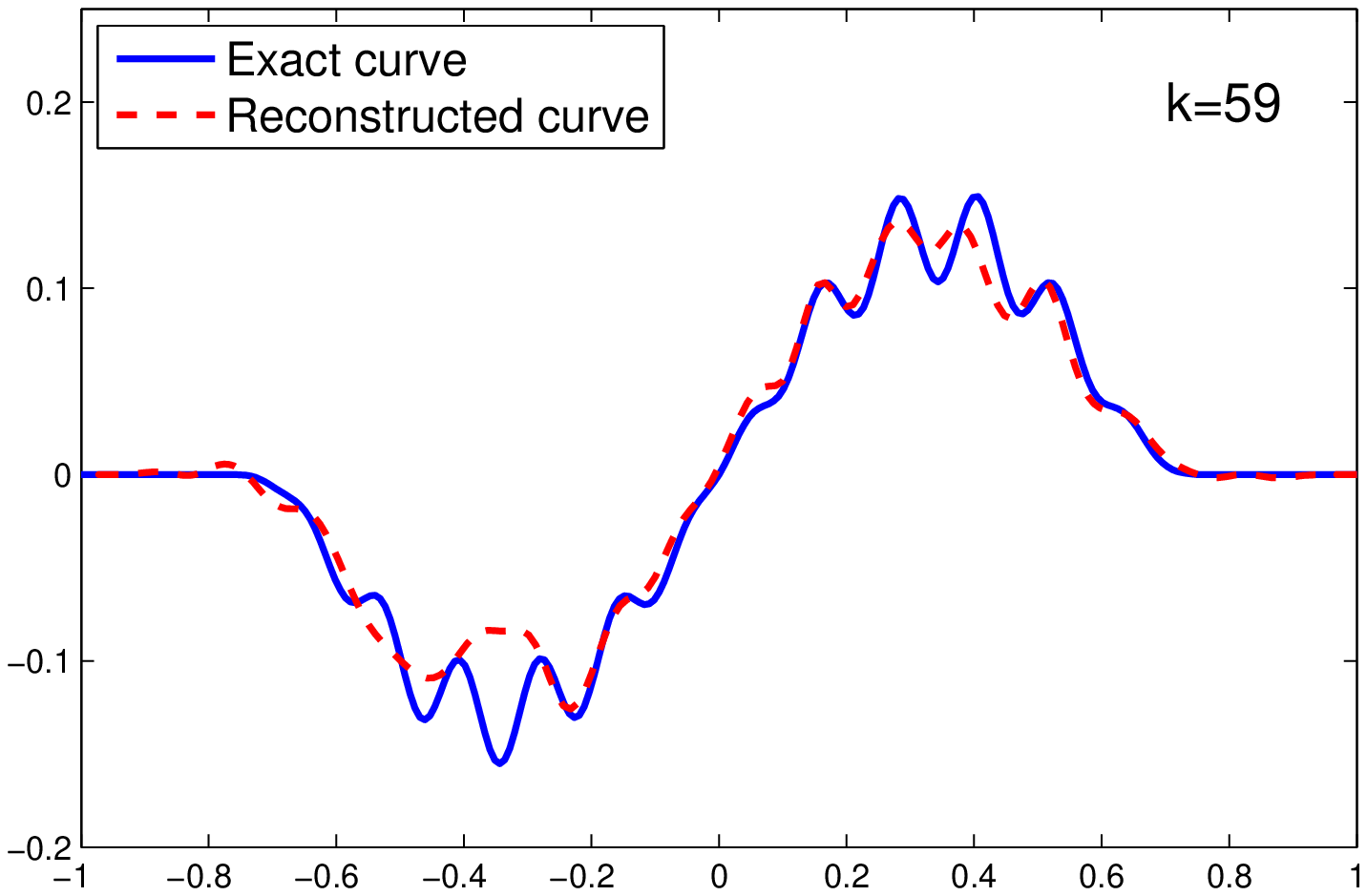}}
  \vspace{0in}
\caption{Reconstruction of a multi-scale surface profile from $5\%$ noisy intensity near-field data
generated by one incident plane wave $u^i=u^i(x;d,k)$ with a normal incidence ($d=(0,-1)$).
Here, the initial and reconstructed curves are presented at $k=13,29,39,49,59$.
}\label{fig4-8}
\end{figure}

\begin{figure}[htbp]
  \centering
  \subfigure{\includegraphics[width=3in]{example/IP2/case7/initial_curve.eps}}
  \hspace{-0.35in}
  \vspace{0in}
  \subfigure{\includegraphics[width=3in]{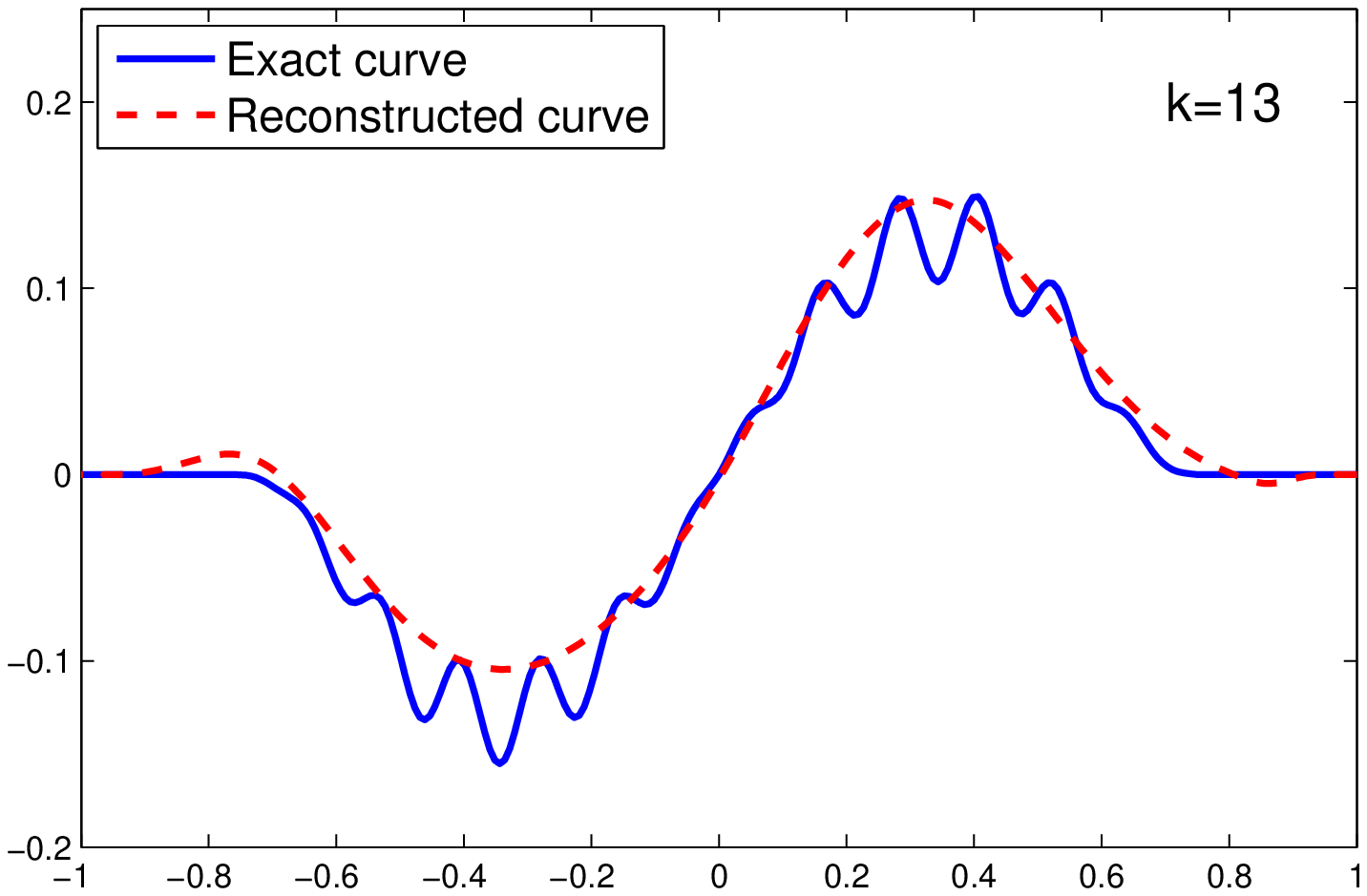}}
  \vspace{0in}
  \hspace{-0.35in}
  \subfigure{\includegraphics[width=3in]{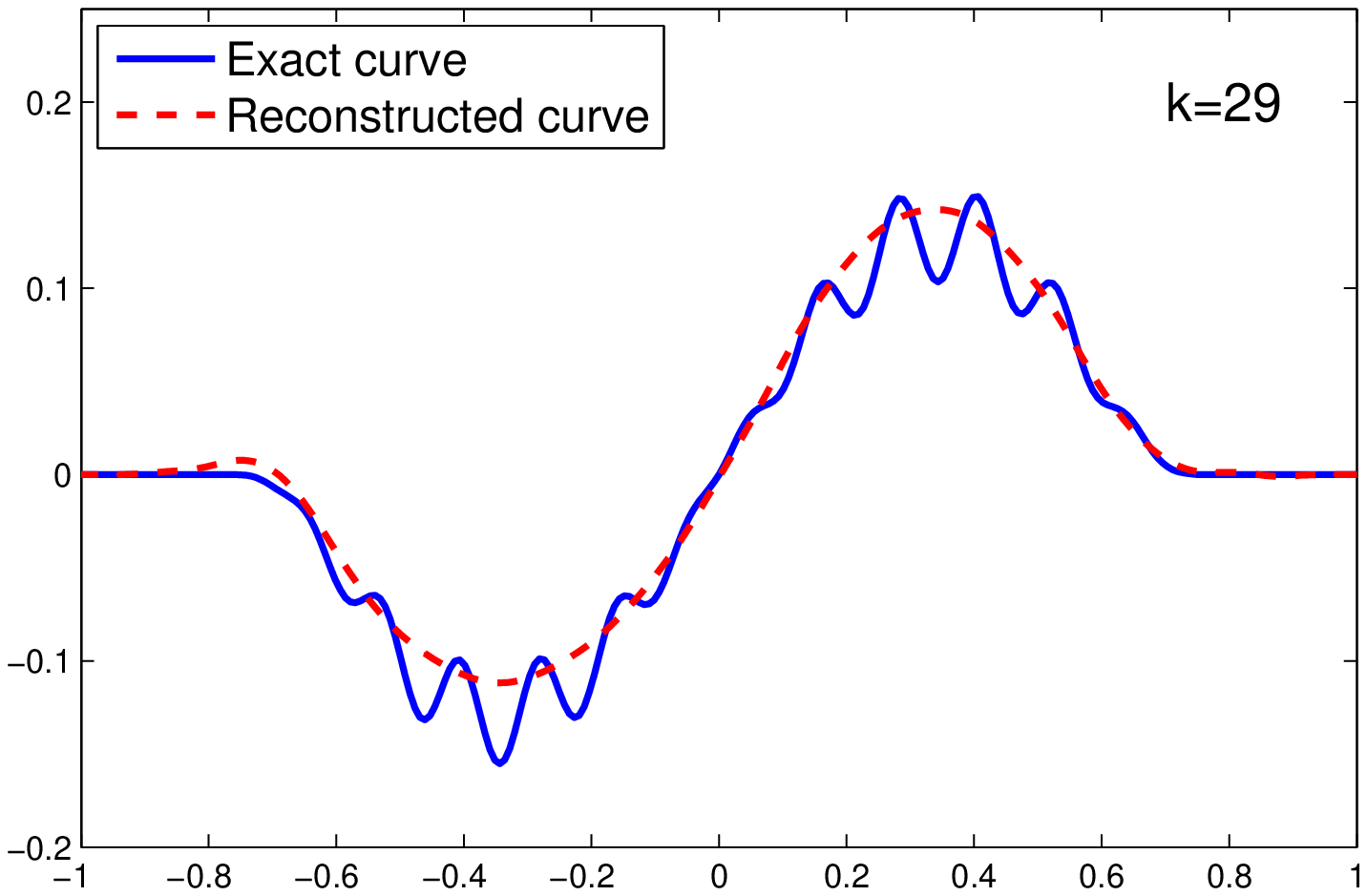}}
  \hspace{-0.35in}
  \vspace{0in}
  \subfigure{\includegraphics[width=3in]{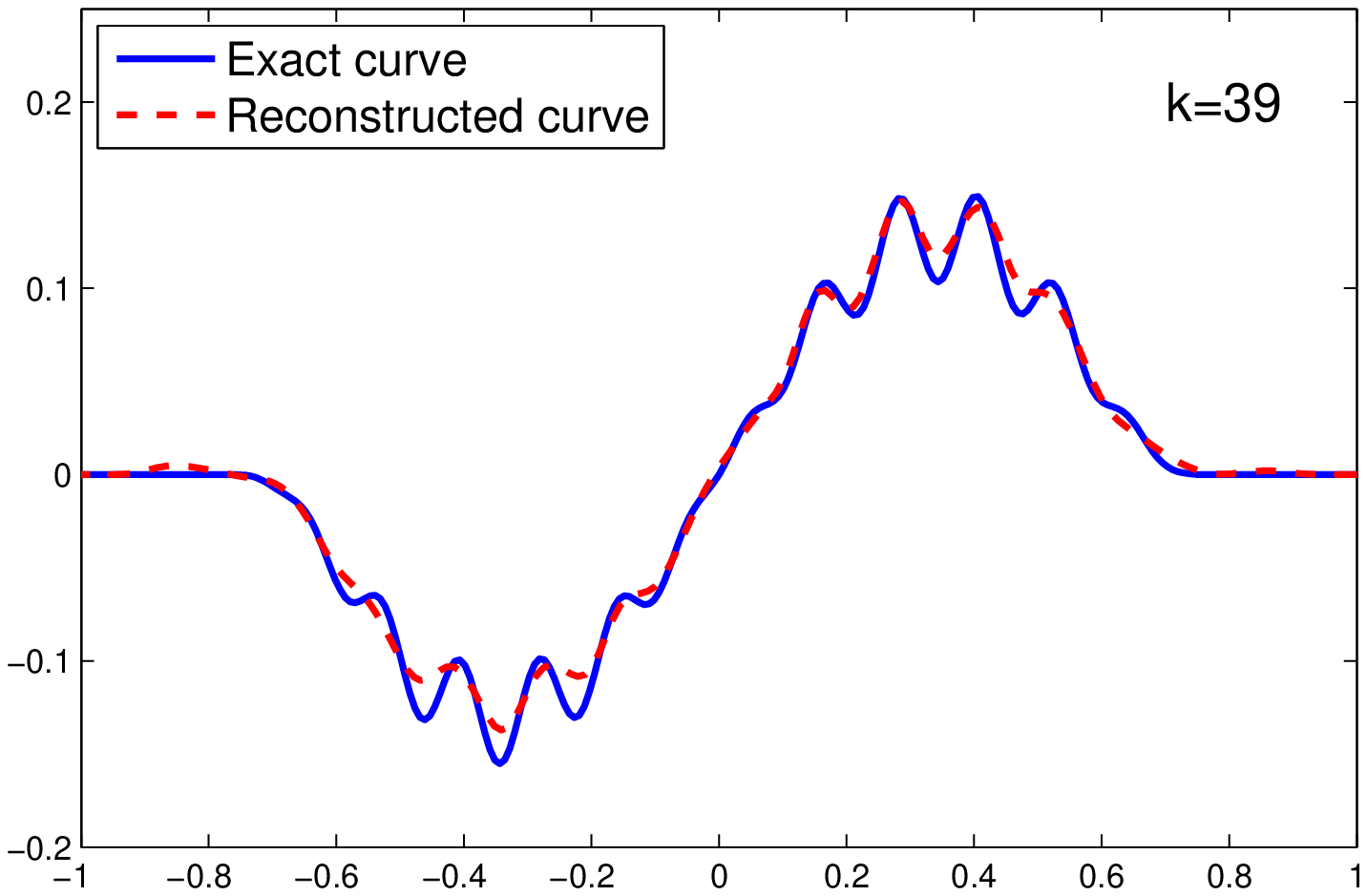}}
  \vspace{0in}
  \hspace{-0.35in}
  \subfigure{\includegraphics[width=3in]{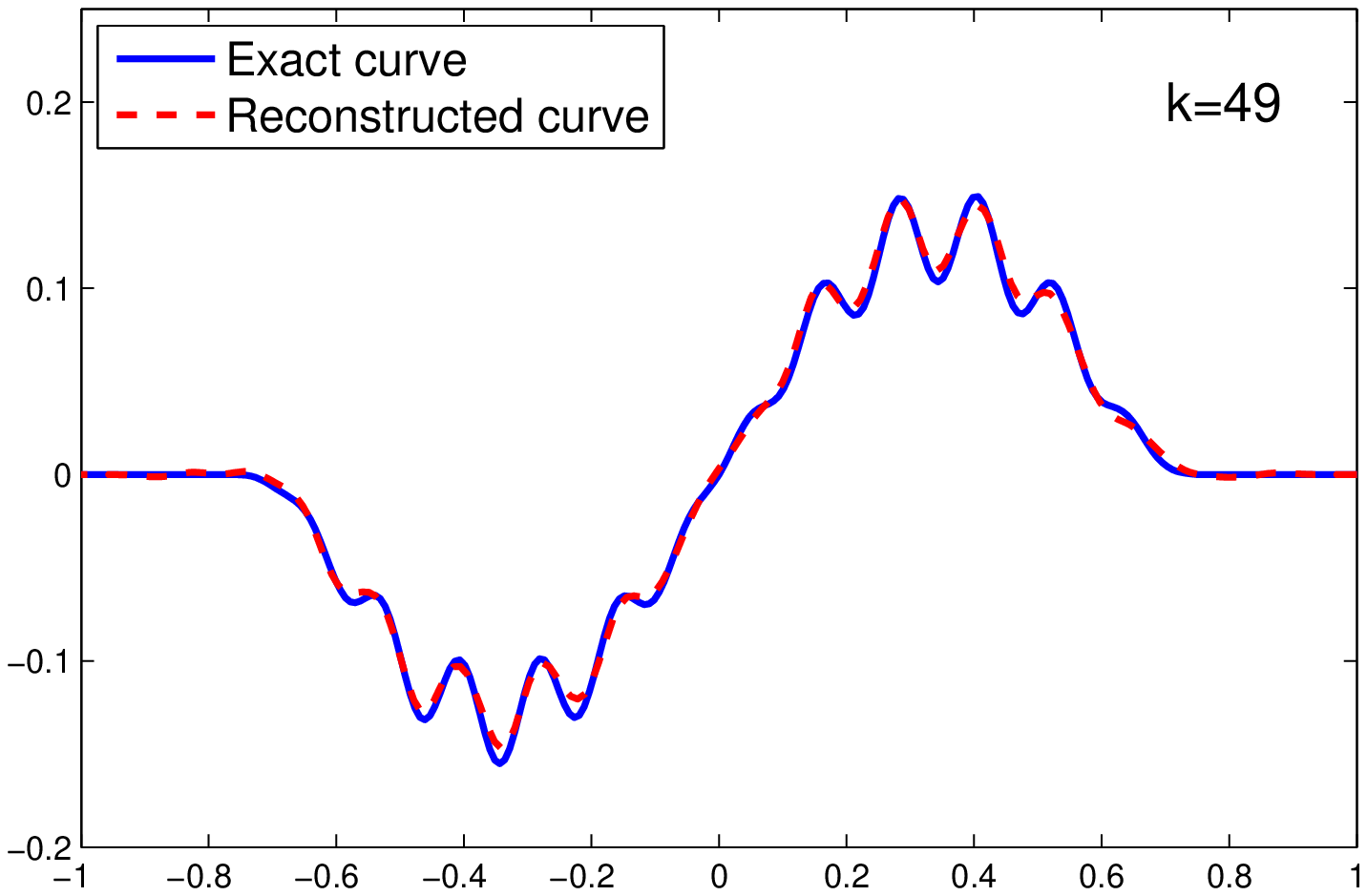}}
  \vspace{0in}
  \hspace{-0.35in}
  \subfigure{\includegraphics[width=3in]{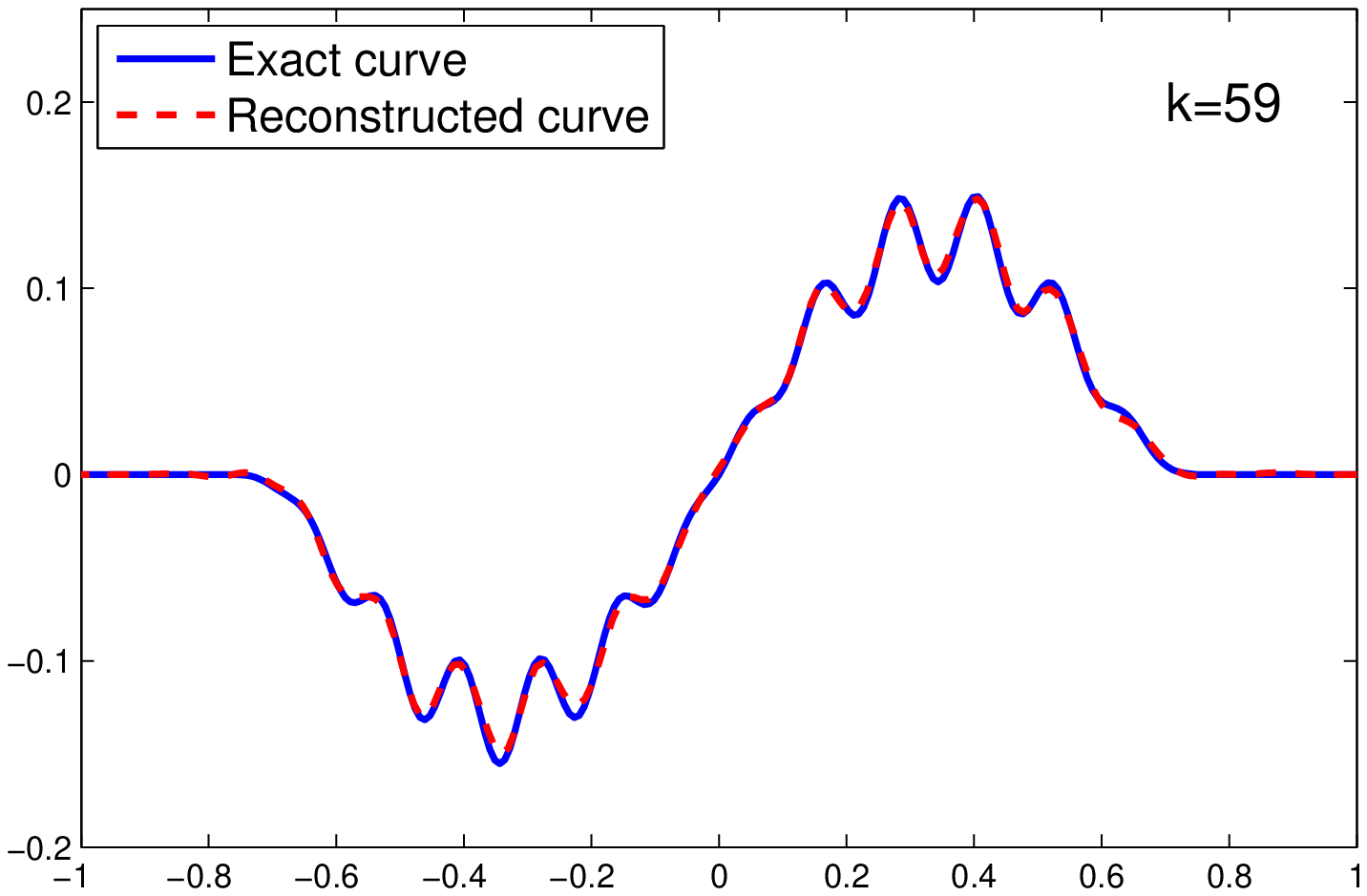}}
  \vspace{0in}
\caption{Reconstruction of a multi-scale surface profile from $5\%$ noisy intensity near-field data
generated by two incident plane waves $u^i=u^i(x;d_l,k),l=1,2$ with different directions
$d_1=(\sin(-\pi/6),-\cos(-\pi/6))$ and $d_2=(\sin(\pi/6),-\cos(\pi/6))$.
Here, the initial and reconstructed curves are presented at $k=13,29,39,49,59.$
}\label{fig4-5}
\end{figure}

\section{Conclusion}\label{se5}

In this paper, it is proved that the translation invariance along the $x_1$-direction of the phaseless far-field pattern
can be broken down if superpositions of two plane waves with different directions are used as the incident fields.
An efficient multi-frequency Newton iterative algorithm is then proposed for reconstructing the locally rough surface profile
from the intensity of the far-field patterns (called the intensity-only far-field data) generated by such incident fields.
The algorithm can also be applied to recover the locally rough surface profile from the intensity of the scattering waves
measured at a straight line segment with a constant height above the surface (called the intensity-only near-field data),
generated by one incident plane wave.
At each iteration, an efficient integral equation solver is used to solve the direct scattering problem which
was proposed previously in \cite{ZhangZhang2013}, so multiple scattering is considered.
Several numerical examples presented here indicate that our inversion algorithm with multi-frequency phaseless far-field and
near-field data gives a stable and accurate reconstruction of the local perturbation of the infinite plane,
even in the multi-scale case.
The reconstruction results are similar to those obtained by the inversion algorithms in \cite{BaoLin11,BaoLin13,ZhangZhang2013}
with using the full far-field and near-field data (including the phase information).
The main reason for this, we think, is that our inversion algorithm considers both multiple scattering (through the use of
the fast integral equation solver for the direct problem) and multiple frequency data.
It was indicated in \cite{pre06,josa08} that multiple scattering plays a key role in subwavelength imaging from far-field data.
Our reconstruction results here together with those given in \cite{BaoLin11,BaoLin13,ZhangZhang2013}
illustrate that multiple scattering in conjunction with using multiple frequency data plays an essential role in
subwavelength imaging from both far-field and near-field data.

\section*{Acknowledgements}

This work was partly supported by the NNSF of China grants 61379093, 91430102 and 11501558.

\end{document}